\documentclass[12pt,a4paper]{article}
\usepackage[a4paper,top=2.5cm,bottom=2.5cm,left=2.5cm,right=2.5cm]{geometry}
\usepackage{amsmath}
 \usepackage{graphicx}
\usepackage{amsthm}
\usepackage{color}
\usepackage{fancyhdr}
\usepackage{pdfpages}
\usepackage{txfonts}
\usepackage{amssymb}
\usepackage{paralist}
\usepackage{extarrows}
\usepackage{setspace}
\usepackage{braket} %脚注缩进的宏包
\usepackage[colorlinks=true,citecolor=blue,linkcolor=blue,anchorcolor=blue,urlcolor=blue]{hyperref}
\allowdisplaybreaks
\numberwithin{equation}{section}
\newtheorem{theorem}{Theorem}[section]
\newtheorem{lemma}{Lemma} [section]

\newtheorem{definition}[theorem]{Definition}
\newtheorem{remark}{Remark}[section]
\allowdisplaybreaks
\begin{document}
\title{Global existence and blow-up for the Hardy-Sobolev parabolic equation in $\mathbb{R}^N$
 \footnote{This study was supported by the Key Program of the National Natural Science Foundation of China (Grant No. 12231016.) and
 and the Program of the National Natural Science Foundation of China (Grant No. 12571117.)}}
\author{Fei Fang  \\  \footnotesize  \emph{School of Mathematics and Statistics,%
Beijing Technology and Business University, Beijing 100048, China}
\\Zhong Tan\footnote{Corresponding author.  Email:  tanloy2026@126.com} \\  \footnotesize  \emph{School of Mathematical Science, Xiamen University, Xiamen, 361005, China}}
\maketitle
\noindent \textbf{\textbf{Abstract:}}
In this paper, we apply a self-similar transformation to convert the parabolic equation with a Hardy term
\begin{equation*}
  \begin{cases}u_t-\Delta u-\mu \frac{u}{|x|^2}=|u|^{2^*-2} u & \text { in } \mathbb{R}^N \times(0, T), \\
 u(x, 0)=u_0(x) & \text { in } \mathbb{R}^N ,
 \end{cases}
\end{equation*}
into the following parabolic equation
\begin{equation*}
 \begin{cases}
v_s-\Delta v-\frac{1}{2} y \cdot \nabla v=\beta v+\frac{\mu v}{|y|^2}+|v|^{2^*-2} v &\text { in } \mathbb{R}^N \times(0, S), \\
 \left.v\right|_{s=0}=v_0 & \text { in } \mathbb{R}^N,
\end{cases}
\end{equation*}
where $N \geqslant 3$, $\mu\in [0,(N-2)^2 /8]$ and $2^{\ast}=2N /(N-2)$.
For this equation, we establish a weighted Hardy inequality. Furthermore, by virtue of the modified potential well method and Palais-Smale sequence analysis, we investigate the long-time behavior and finite-time blow-up properties of solutions to the parabolic equation.

\noindent  \textbf{Keywords:} Parabolic equation, Hardy inequality, Self-similar transformation, Blow-up.

\noindent \textbf{Mathematics Subject Classification} 35K05, 35K67, 35J15, 35J20

\section{Introduction}
This paper  is concerned with  the following classical semilinear parabolic equation associated with critical Sobolev exponent and hardy term in $\mathbb{R}^N$ :
\begin{equation}\label{e01}
  \begin{cases}u_t-\Delta u-\mu \frac{u}{|x|^2}=|u|^{p-1} u & \text { in } \mathbb{R}^N \times(0, T), \\
 u(x, 0)=u_0(x) & \text { in } \mathbb{R}^N ,
 \end{cases}
\end{equation}
where $N \geqslant 3$, $\mu\in [0,(N-2)^2 /8]$, $p=2^{\ast}-1$ and $2^{\ast}=2N /(N-2)$.

Equation (\ref{e01}) can serve as a fundamental model for a wide range of problems originating from quantum mechanics, chemistry, cosmology, astrophysics, and differential geometry.
The inverse square potential $ 1/|x|^2$ is prevalent across multiple disciplines (see \cite{cr3,cr4}).
For instance, it emerges in point dipole interactions within molecular physics \cite{cr5}: specifically, the interaction between the electron's charge and the molecule's dipole moment gives rise to long-range forces, which in turn introduces an inverse square potential into the Schr\"{o}dinger equation describing the electron's wave function.
Furthermore, Equation (\ref{e01}) is closely intertwined with the Yamabe problem on the sphere $\mathbb{S}^N$. In fact, via stereographic projection, we can identify $\mathbb{R}^N$ with $\mathbb{S}^N$ and equip $\mathbb{S}^N$ with a metric whose scalar curvature is singular at the north pole.
The problem of finding a conformal metric with a prescribed scalar curvature of 1 thereby reduces to solving Equation (\ref{e01}) (see \cite{cr6}).

When $\mu=0$, problem \eqref{e01} becomes the following classical heat equation:
\begin{equation}\label{e001}
  \begin{cases}
  u_t-\Delta u=|u|^{q-1} u & \text { in } \mathbb{R}^N \times(0, T), \\
  u(x, 0)=u_0(x) & \text { in } \mathbb{R}^N.
  \end{cases}
\end{equation}
This equation has been studied in the literature \cite{r3,r9,r8, r5,r6,r7,tan,r4}.

 In 1966, Fujita \cite{r3} studied problem \eqref{e001} and obtained the classical critical Fujita exponent $q^{\star}=1+\frac{2}{n}$.
It is proved that when $1<q \leq q^{\star}$,
all solutions corresponding to any nonnegative nontrivial initial data cannot exist globally and must blow up in finite time.
When $q>q^{\star}$, there exists sufficiently small nonnegative initial data such that the equation admits globally bounded smooth solutions, and only solutions with large initial data blow up,
which completely characterizes the dichotomous critical behavior generated by the competition between diffusion effect and nonlinear growth effect.

 In \cite{r4}, by using the heat semigroup and contraction fixed point theorem as the main tools, Weissler studied the solvability of problem \eqref{e001} with $L^p$ initial data and obtained the Fujita exponent $q_c=1+\frac{2 p}{n}$.
Similar results to those in \cite{r3} were proved.
Thus, the classical Fujita critical blow-up theory is extended from lower-order spaces to the general $L^p$ framework, and the critical division theory of local well-posedness for semilinear parabolic equations is improved.

In \cite{r5}, a sufficient condition on the decay order of initial data, which may change sign, such that the solution
of (\ref{e001}) blows up in finite time, was given. Using self-similar transformation, Mizoguchi and Yanagida
\cite{r6,r7} established the global existence and blow-up results for problem (\ref{e001}) in $\mathbb{R}^1$.
In \cite{r9,r8}, the decay and blow-up of the solution with low energy initial data were studied by means of the potential well and forward self-similar transformation.

The following heat equation with potential has also been studied in some literature \cite{r11,r12,r1,r2,r10}:
\begin{equation}\label{e002}
 \begin{cases}
 \partial_t u-\Delta u=V(x) u+b(x) |u|^{q-1} u & \text { in } \mathbb{R}^N \times(0, T), \\
 u(x, 0)=u_0(x) & \text { in } \mathbb{R}^N,
 \end{cases}
\end{equation}
In \cite{r2}, the authors assumed that $V(x)=1$, and $0 \leq b(x) \leqslant C|x|^{m}$, $(m \in(-2, \infty))$ and proved a critical Fujita exponent $q_1$ such that
when $1<q<q_1$, there is no global solution for equation \eqref{e002}; if $q>q_1$, then both global and non-global solutions exist.

In \cite{r1}, the authors assumed that $V(x) \sim \frac{\omega}{|x|^2}$, $ b(x) \approx c_2|x|^{m}, (m \in(-\infty, \infty))$ as $|x| \rightarrow \infty$,
and the authors proved that there exists a critical Fujita exponent $q_2$; global solutions exist when $q>q_2$; while all solutions blow up in finite time when $1<q \leqslant q_2$.
In \cite{r10}, Zhang assumed that $N \geq 3$, $V(x)=\frac{a}{1+|x|^b}(b>0)$, and combined with several restrictions on parameters $a, b$, the corresponding Fujita critical exponent was obtained.
Afterwards, in \cite{r11}, Ishige relaxed the dimension condition to $N \geq 2$, and assumed that the parameter $a>0$, $V(x)=\frac{a}{|x|^2}$ and $b=1$, and also obtained the sharp Fujita critical exponent.
One can also refer to the latest research results of Ishige and Kawakami \cite{r12}.

For a general scope of this topic, we refer the interested readers to the monograph \cite{r13} and references therein.

In this article, we consider the problem (\ref{e01}) with low initial energy, critical initial energy and high initial energy. The results in our paper will be obtained by the self-similar transformation and the modified potential well method. Potential well method, which was first put forward to consider semi-linear hyperbolic initial boundary value problem by Payne and Sattinger \cite{dr10,dr11}
 around 1970s, is a powerful tool in studying the long time behaviors of solutions of some evolution equations. The potential well is defined by the level set of energy functional and the derivative functional. It is generally true that solutions starting inside the well are global in time, solutions starting outside the well and at an unstable point blow up in finite time. After the pioneer work of Sattinger and Payne,
some authors\cite{dr3,dr5,cr8,dr6,dr7,dr8,dr9,dr12,dr4}
 used the method to study the global existence and nonexistence of solutions for various nonlinear evolution equations with initial boundary value problem.
 In \cite{cr8,dr6}, Liu et al. modified and improved the method by introducing a family of potential wells which include the known potential well as a special case. The modified potential well method has been used to study semilinear pseudo-parabolic equations \cite{dr4}
and fourth-order parabolic equation \cite{dr1}. In this paper, we use the modified potential well method to obtain global existence and blow up in finite time of solutions when the initial energy is low, critical and high, respectively. When the initial energy is low, similar results are obtained in \cite{r9}, but our result is more general, moreover, we prove a more precise decay rate of the $L^2$ norm of global solution.

This paper is organized as follows. In Section 2, we give some notations, definitions and lemmas concerning the basic properties of the related functionals and sets. Sections 3 and 4 will be devoted to the cases $E_K\left(v_0\right)<d$ and $E_K\left(v_0\right)=d$, respectively, where $E_K(v)$ will be introduced in Section 2. In Section 5, we consider the case when the initial energy is high, i.e. $E_K\left(v_0\right)>d$. In section 6,  we consider the asymptotic behavior of the global solution, which is similar to
the Palais-Smale  sequence of stationary equation.

\section{Preliminaries and main lemmas}
In this section, we shall introduce the self-similar transformation and the modified potential
well method and give a series of their properties for problem (\ref{e01}). The self-similar transformation
is defined as follow:
$$
v(y, s)=(1+t)^{1 /(p-1)} u(x, t), \quad t=e^s-1, \quad x=(1+t)^{1 / 2} y.
$$
Through this transformation and the detailed calculation (see  Appendix), problem \eqref{e01} can be transformed into the following problem
\begin{equation}\label{e02}
 \begin{cases}
v_s-\Delta v-\frac{1}{2} y \cdot \nabla v=\beta v+\frac{\mu v}{|y|^2}+|v|^{p-1} v &\text { in } \mathbb{R}^N \times(0, S), \\
 \left.v\right|_{s=0}=v_0 & \text { in } \mathbb{R}^N,
\end{cases}
\end{equation}
where $S=\log (1+T)$, $\beta=(N-2)/4$. Set
$$
K(y):=e^{|y|^2 / 4}.
$$
Then we let $$L v:=-\Delta v-\frac{1}{2} y \cdot \nabla v=-\frac{1}{K} \nabla \cdot(K \nabla v).$$
We  denote by $X$ the Hilbert space obtained as the completion of $C_c^{\infty}\left(\mathbb{R}^N\right)$ with respect to the norm
$$\|\nabla v\|_{K,2}:=\left(\int_{\mathbb{R}^N}  |\nabla v|^2 K(y) dy\right)^{\frac{1}{2}}$$
which is induced by the inner product
$$
(v_1, v_2)_K:=\int_{\mathbb{R}^N} (\nabla v_1 \cdot \nabla v_2)K(y) dy.
$$
For each $q \in\left[2,2^{\ast}\right]$,  we  define $L_K^q\left(\mathbb{R}^N\right)$  and  $L_K^q\left(\mathbb{R}^N, \frac{1}{|y|^2}\right)$ as follows:
$$
L_K^q\left(\mathbb{R}^N\right):=\left\{v \text { measurable in } \mathbb{R}^N:\|v\|_{K,q}:=\left(\int_{\mathbb{R}^N}|v|^q  K(y)dy\right)^{1 / q}<\infty\right\},
$$
$$
L_K^q\left(\mathbb{R}^N, \frac{1}{|y|^2}\right):=\left\{v \text { measurable in } \mathbb{R}^N:\left\|v\right\|_{K,q,s}:=\left(\int_{\mathbb{R}^N} \frac{1}{|y|^2}|v|^q K(y)dy\right)^{1 / q}<\infty\right\} .
$$

The domain $D(L)$ of this operator consists of all $v \in L_K^2\left(\mathbb{R}^N\right)$ such that $L v \in L_K^2\left(\mathbb{R}^N\right)$,
 and one has $D(L)=X$; see Lemma 2.1 in \cite{cr1}. Moreover, $L$ is positive and self-adjoint with a compact inverse.
 In particular, the normalized eigenfunctions of $L$ constitute a complete orthonormal basis in $L_K^2\left(\mathbb{R}^N\right)$.
 The first eigenvalue of $L$ satisfies $\lambda_1=N / 2$, which yields the following Poincar\'{e} inequality
 \begin{equation}\label{po}
  \lambda_1\|v\|_{K, 2}^2 \leqslant\|\nabla v\|_{K, 2}^2, \quad v \in X,
\end{equation}
as stated in Proposition 2.3 of \cite{cr2}.

For $v \in X$, we set
$$
\begin{aligned}
E_{K}(v)&=\frac{1}{2} \int_{ \mathbb{R}^N }\left(|\nabla v|^2-\mu \frac{|v|^2}{|y|^2}
-\beta|v|^2\right) K(y)d y-\frac{1}{2^*} \int_{\mathbb{R}^N }|v|^{2^*} K(y)dy)\\
&:=\frac{1}{2}A(v)-\frac{1}{2^{\ast}}B(v),\\
D_{K}(v)&=A(v)-B(v).
\end{aligned}$$
The Nehari manifold is defined by
$$
\mathcal{N}=\left\{v \in X: D_{K}(v)=0, v \neq 0\right\},
$$
which can be separated into the two unbounded sets
$$
\begin{aligned}
& \mathcal{N}_{+}=\left\{v\in X: D_{K}(v)>0\right\}, \\
& \mathcal{N}_{-}=\left\{v \in X: D_{K}(v)<0\right\}.
\end{aligned}
$$
The potential well and its corresponding set are defined, respectively, as
$$
\begin{aligned}
W & =\left\{v \in X: D_K(v)>0, E_K(v)<d\right\} \cup\{0\}, \\
V & =\left\{v \in X: D_K(v)<0, E_K(v)<d\right\},
\end{aligned}
$$
where
$$
d=\min _{v \in X \backslash\{0\}} \max _{s \geqslant 0} E_K(s v)=\inf _{v \in \mathcal{N}} E_K(v)
$$
is the depth of the potential well $W$.

Now let us define the level set
$$
E^\alpha=\left\{v \in X: E_{K}(v)<\alpha\right\} .
$$
Furthermore, by the definition of $E_K(v), \mathcal{N}, E^\alpha$ and $d$, we easily know that
$$
\mathcal{N}^\alpha=\mathcal{N} \cap E^\alpha \equiv\left\{v \in \mathcal{N}:
A(v)<\frac{2 \alpha(p+1)}{p-1}\right\} \neq \varnothing \quad \text { for all } \alpha>d.
$$
We now define
\begin{equation*}
\lambda_\alpha=\inf \left\{\|v\|_{K,2}: v\in \mathcal{N}_\alpha\right\}, \Lambda_\alpha=\sup \left\{\|v\|_{K,2}: v \in \mathcal{N}_\alpha\right\} \text { for all } \alpha>d .
\end{equation*}
It is clear that $\lambda_\alpha$ is nonincreasing and $\Lambda_\alpha$ is nondecreasing with respect to $\alpha$.

Let us define the modified functional and Nehari manifold as follows:
\begin{equation}\begin{aligned}
  D_{K,\delta}(v)&=\delta A(v)-B(v),\\
\mathcal{N}_\delta & =\left\{u \in X: D_{K, \delta}(v)=0,\|\nabla v\|_{K,2} \neq 0\right\}, \\
d_\delta & =\inf _{v \in N_\delta} E_K(v).
\end{aligned}\end{equation}
%$$
%\mathcal{N}_\alpha=\mathcal{N} \cap E^\alpha \equiv\left\{v \in \mathcal{N}:
%\|\nabla v\|_{K, 2}^2-\mu\left\|\frac{v}{y}\right\|_{K, 2}^2-\beta\|v\|_{K, 2}^2<\sqrt{\frac{2 \alpha(p+1)}{p-1}}\right\} \neq \varnothing \quad \text { for all } \alpha>d.
%$$
Based on Lemma \ref{l02}, we see that  $A(v)>0$. Therefore, we can define
\begin{equation}\label{e03}
 S_K=\inf \left\{\left.\frac{A(v)}{\|v\|_{K, p+1}^2} \right\rvert\, v \in X\setminus\{0\}\right\}
\end{equation}
and
$$r(\delta) =\delta^{\frac{N-2}{2}} S_K^{\frac{N}{2}}.$$
 We also introduce the following sets:
$$
\begin{aligned}
\mathcal{B} & =\left\{v_0 \in X: \text { the solution } v=v(s) \text { of (\ref{e02}) blows up in finite time }\right\}, \\
\mathcal{G} & =\left\{v_0 \in X: \text { the solution } v=v(s) \text { of (\ref{e02}) exists for all } s>0\right\}, \\
\mathcal{G}_o & =\left\{v_0 \in \mathcal{G}: v(s) \mapsto 0 \text { in } X \text { as } s \rightarrow \infty\right\} .
\end{aligned}
$$
%$$
%S_K=\inf \left\{\left.\frac{\|\nabla v\|_{K, 2}^2-\mu\left\|\frac{v}{y}\right\|_{K, 2}^2
%-\beta\|v\|_{K, 2}^2}{\|v\|_{K, p+1}^2} \right\rvert\, v \in X\setminus\{0\}\right\}.
%$$
Then we can define the modified potential wells and their corresponding sets as follows:
$$
\begin{aligned}
W_\delta & =\left\{v \in X: D_{K,\delta}(v)>0, E_K(v)<d(\delta)\right\} \cup\{0\}, \\
V_\delta & =\left\{v \in X: D_{K,\delta}(v)<0, E_K(v)<d(\delta)\right\}, \\
B_\delta & =\left\{v \in X: \sqrt{A(v)}<r(\delta)\right\}, \\
B_\delta^c & =\left\{v \in X:\sqrt{A(v)}>r(\delta)\right\} .
\end{aligned}
$$

\begin{definition}\label{wd}
(Weak solution). We say that a function $v=v(y, s)$ is a weak solution of problem (\ref{e02}) in $\mathbb{R}^N_T:=\mathbb{R}^N \times(0, T)$ if and only if
$$
v \in L^{\infty}\left(0, S ; X\right), v_s \in L^2\left(\mathbb{R}^N_T\right)=L^2\left(0, S ; L_K^2(\mathbb{R}^N)\right),
$$
and satisfies problem (\ref{e02}) in the distribution sense, that is
$$\int_0^s\left(v_s, w\right)_K+\left(\nabla v, \nabla w\right)_K d\tau=\int_0^s\left(|v|^{p-1} v+\mu\frac{v}{|y|^2}+\frac{v}{p-1},
w\right)_Kd\tau, \text { for all } w \in X, s>0,$$
where $v(y, 0)=v_0(y) \in X$.
\end{definition}

For future convenience, we give some useful lemmas which will play an important role in the proof of our main results.
We first present the following Sobolev embedding theorem with the weighted function $K$.

\begin{lemma}\label{le0-1}
(See \cite{cr2}) The embedding $X \hookrightarrow L_K^q\left(\mathbb{R}^N\right)$ is continuous for all $q \in\left[2,2^{\ast}\right]$ and it is compact for all $q \in\left[2,2^{\ast}\right)$.
\end{lemma}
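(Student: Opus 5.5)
The plan is to reduce the weighted problem to the standard (unweighted) Sobolev and Rellich theory by the substitution $w = K^{1/2} v = e^{|y|^2/8} v$. This substitution turns the weighted Dirichlet form into an unweighted one plus a confining harmonic potential. For $v\in C_c^\infty(\mathbb{R}^N)$, I would expand $|\nabla w|^2$ and integrate the cross term by parts, using $\Delta(|y|^2/8)=N/4$. This should give the identity
\begin{equation*}
\int_{\mathbb{R}^N}|\nabla v|^2K\,dy=\int_{\mathbb{R}^N}|\nabla w|^2dy+\int_{\mathbb{R}^N}\Big(\frac{|y|^2}{16}-\frac{N}{4}\Big)w^2dy .
\end{equation*}
The coefficients $1/16$ and $N/4$ are to be checked in the routine calculation. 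Combined with the Poincar\'e inequality (\ref{po}), which bounds $\int w^2\,dy=\|v\|_{K,2}^2$ by $\frac{2}{N}\|\nabla v\|_{K,2}^2$, this yields
\begin{equation*}
\int|\nabla w|^2dy+\int |y|^2w^2dy\le C\|\nabla v\|_{K,2}^2 .
\end{equation*}
This bound then extends to all $v\in X$ by density.

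For continuity, the case $q=2$ is exactly (\ref{po}). For $q=2^*$, I would write $|v|^{2^*}K=|w|^{2^*}K^{1-2^*/2}$ and note that $1-2^*/2<0$, so $K^{1-2^*/2}\le 1$. Hence
\begin{equation*}
\|v\|_{K,2^*}^{2^*}\le \int |w|^{2^*}dy\le C\|\nabla w\|_{2}^{2^*}\le C\|\nabla v\|_{K,2}^{2^*},
\end{equation*}
by the classical Sobolev inequality. Intermediate $q$ then follows by H\"older interpolation in the measure $K\,dy$ between $q=2$ and $q=2^*$.

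For compactness with $q\in[2,2^*)$, I would take a sequence bounded in $X$, so that the corresponding $w_n$ are bounded in $H^1(\mathbb{R}^N)$ together with $\int|y|^2w_n^2\,dy$. By Rellich on balls and the tail estimate $\int_{|y|>R}w_n^2\,dy\le R^{-2}C$, a subsequence converges in $L^2(\mathbb{R}^N)$. Since $\int w^2\,dy=\|v\|_{K,2}^2$, this gives compactness in $L^2_K$. For $2<q<2^*$, interpolating between the $L^2_K$ convergence and the uniform $L^{2^*}_K$ bound gives convergence in $L^q_K$.

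The main obstacle I expect is making the integration by parts rigorous and carrying it to the completion $X$. The key point is that elements of $X$ are exactly the limits of $C_c^\infty$ functions, so the identity and the inequalities hold on the dense class and pass to limits. One also needs the limit in $L^2_K$ to be identified with the $X$-weak limit, which follows from the continuous embedding.
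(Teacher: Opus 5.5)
The paper gives no argument for this lemma; it only quotes \cite{cr2}. Your proposal is a correct, self-contained proof via the conjugation $w=K^{1/2}v$, which turns $L$ into a harmonic oscillator. The Sobolev step using $K^{1-2^*/2}\le 1$, the H\"older interpolation, the Rellich-plus-tail compactness argument and the density extension are all sound.

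There is one computational correction: the cross term has the opposite sign. Since $K^{1/2}\nabla v=\nabla w-\frac{y}{4}w$ and $-\frac12\int w\,y\cdot\nabla w\,dy=\frac N4\int w^2\,dy$, the identity reads
\[
\|\nabla v\|_{K,2}^2=\int_{\mathbb{R}^N}|\nabla w|^2\,dy+\frac{1}{16}\int_{\mathbb{R}^N}|y|^2w^2\,dy+\frac{N}{4}\int_{\mathbb{R}^N}w^2\,dy .
\]
You can check this on $v=e^{-|y|^2/4}$, the first eigenfunction of $L$ with eigenvalue $N/2$. With your minus sign the right-hand side vanishes, while the left-hand side equals $\frac N2\|v\|_{K,2}^2$.

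The slip is harmless, and fixing it actually shortens your argument:
\begin{itemize}
\item The bounds on $\|\nabla w\|_2^2+\||y|w\|_2^2$ and on $\|w\|_2^2$ become immediate, with no appeal to \eqref{po}.
\item The Heisenberg inequality $\int|\nabla w|^2\,dy\int|y|^2w^2\,dy\ge\frac{N^2}{4}\bigl(\int w^2\,dy\bigr)^2$ then recovers \eqref{po} itself with $\lambda_1=N/2$.
\item Combining the identity with your Sobolev step gives $S_0\|v\|_{K,2^*}^2+\frac N4\|v\|_{K,2}^2\le\|\nabla v\|_{K,2}^2$, which is Lemma \ref{l01} for $N\ge 4$.
\end{itemize}
Compared with the paper's bare citation, your route therefore makes this lemma independent of \cite{cr2}. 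With the sign fixed, it does the same for \eqref{po} and for most of Lemma \ref{l01}.
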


\begin{lemma}\label{l00}
 Assume  $u \in X$. Then we have
\begin{description}
  \item[(1)] $u /|x| \in L^2_{K}\left(\mathbb{R}^N\right)$.
  \item[(2)] (The weighted Hardy inequality)
\begin{equation}\label{wh}
  \int_{\mathbb{R}^N} \frac{|u|^2}{|x|^2} K(x)d x \leqslant C_{N} \int_{\mathbb{R}^N}|\nabla u|^2 K(x)d x
\end{equation}
with $C_{N}=\left(\frac{2}{N-2}\right)^2 $.
  \item[(3)] The constant $C_{N}$ is optimal.

\end{description}
\end{lemma}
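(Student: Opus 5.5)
The plan is to prove the inequality first for $u\in C_c^\infty(\mathbb{R}^N)$ by a divergence identity adapted to the weight $K(x)=e^{|x|^2/4}$. Parts (1)–(2) then follow for all $u\in X$ by density. Optimality (3) is obtained by a concentration (scaling) argument near the origin, which reduces it to the classical unweighted Hardy inequality.

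\emph{Steps (1)–(2).} Consider the vector field $F(x)=\frac{x}{|x|^2}K(x)$. For $x\neq 0$ a direct computation gives
\begin{equation*}
\nabla\cdot F=K(x)\Big(\frac{N-2}{|x|^2}+\frac{1}{2}\Big)\geq \frac{N-2}{|x|^2}K(x).
\end{equation*}
Since $N\geq 3$, $F$ is locally integrable, and $|u|^2F$ has an integrable singularity at $0$. So for $u\in C_c^\infty(\mathbb{R}^N)$ (real-valued, say) we may integrate by parts, cutting out a small ball $B_\varepsilon$ whose boundary term is $O(\varepsilon^{N-2})\to 0$. Dropping the nonnegative term $\frac12\int|u|^2K$, this yields
\begin{equation*}
(N-2)\int_{\mathbb{R}^N}\frac{|u|^2}{|x|^2}K\,dx\leq \int_{\mathbb{R}^N}|u|^2\,\nabla\cdot F\,dx=-2\int_{\mathbb{R}^N}u\,\nabla u\cdot\frac{x}{|x|^2}K\,dx .
\end{equation*}
Applying Cauchy–Schwarz with the weight $K$ bounds the right-hand side by
\begin{equation*}
2\Big(\int\frac{|u|^2}{|x|^2}K\Big)^{1/2}\Big(\int|\nabla u|^2K\Big)^{1/2}.
\end{equation*}
Dividing and squaring gives \eqref{wh} with $C_N=(2/(N-2))^2$.

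For general $u\in X$, take $u_n\in C_c^\infty$ with $u_n\to u$ in $X$. By Lemma \ref{le0-1}, $u_n\to u$ in $L^2_K$, and along a subsequence $u_n\to u$ a.e. The inequality applied to $u_n-u_m$ shows that $u_n/|x|$ is Cauchy in $L^2_K$, and its limit must be $u/|x|$. This gives (1), and passing to the limit gives (2).

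\emph{Step (3).} Fix $\varphi\in C_c^\infty(\mathbb{R}^N\setminus\{0\})$ and set $u_\lambda(x)=\varphi(x/\lambda)$. Changing variables $x=\lambda z$, the quotient becomes
\begin{equation*}
\frac{\int|u_\lambda|^2|x|^{-2}K\,dx}{\int|\nabla u_\lambda|^2K\,dx}=\frac{\int|\varphi(z)|^2|z|^{-2}K(\lambda z)\,dz}{\int|\nabla\varphi(z)|^2K(\lambda z)\,dz}.
\end{equation*}
Because $\varphi$ has compact support, $K(\lambda z)\to1$ uniformly on that support as $\lambda\to0$, so the quotient tends to the unweighted Hardy quotient of $\varphi$. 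The classical Hardy constant $(2/(N-2))^2$ is sharp in $C_c^\infty(\mathbb{R}^N\setminus\{0\})$: use smooth truncations of $|z|^{-(N-2)/2+\varepsilon}$ near $0$ and $\infty$. Hence the supremum of the weighted quotient over $X\setminus\{0\}$ is at least $C_N$, which proves optimality.

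The main technical care lies in the justification of the integration by parts at the singular point and in the density passage to $X$. The key idea is discarding the positive $\frac12 K$ term, which shows that the Gaussian weight only helps. Optimality then uses that $K\equiv1$ at the origin, so the classical near-extremals, concentrated at $0$, do not see the weight.
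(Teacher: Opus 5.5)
Your proof is correct. For (1)--(2) it is the paper's argument in divergence form. The paper's representation $|u(x)|^2K(x)=-\int_1^\infty\frac{d}{d\lambda}\bigl(|u(\lambda x)|^2K(\lambda x)\bigr)\,d\lambda$, integrated against $|x|^{-2}$, rescaled by $y=\lambda x$ and multiplied by $N-2$, gives exactly your identity $(N-2)\int\frac{|u|^2}{|x|^2}K+\frac12\int|u|^2K=-2\int u\,\nabla u\cdot\frac{x}{|x|^2}K$. Both proofs then discard the $\frac12\int|u|^2K$ term and apply Cauchy--Schwarz. You also carry out the density step behind (1), which the paper only asserts.

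For (3) the routes genuinely differ. The paper tests on $V=1$ in $B_1$ and $V=r^{-\gamma}e^{-r^2/4}$ for $r>1$, with $\gamma=\frac{N-2+2\varepsilon}{2}$, aiming for a quotient $\geq 1/\gamma^2$. As written this fails for two reasons:
\begin{itemize}
\item $V$ jumps from $1$ to $e^{-1/4}$ across $|x|=1$, so $V\notin X$.
\item The pointwise bound $V^2/|x|^2\geq|\nabla V|^2/\gamma^2$ is reversed, because $\gamma^2+\gamma|x|^2+|x|^4/4\geq\gamma^2$.
\end{itemize}
Even with the jump removed (take the constant $e^{-1/4}$ inside $B_1$), that family converges in $X$ as $\varepsilon\to0$ to a fixed nonzero function. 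Its quotient is strictly below $C_N$, since your identity makes (2) strict for $u\neq 0$.

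Your dilation argument identifies the right mechanism. The Hardy quotient is scale invariant, and $K(\lambda z)\to K(0)=1$ uniformly on compact sets, so near-extremals concentrating at the origin do not see the Gaussian weight. Thus (3) reduces to the classical sharpness of $(2/(N-2))^2$ on $C_c^\infty(\mathbb{R}^N\setminus\{0\})$. Your argument uses only admissible test functions, and it works verbatim for any weight that is continuous and positive at $0$.
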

\begin{proof}
  By a density argument, it suffices to consider only smooth functions $u \in C_0^{\infty}\left(\mathbb{R}^N\right)$. Then we have
  $$
\begin{aligned}
  |u(x)|^2K(x)&=-\int_1^{\infty} \frac{d}{d \lambda}\left(|u(\lambda x)|^2  K(\lambda x)\right)d \lambda\\
 & =-2 \int_1^{\infty} u(\lambda x)\langle x, \nabla u(\lambda x)\rangle  K(\lambda x) d \lambda-\int_1^{\infty} \left(\frac{\lambda |x|^2}{2}|u(\lambda x)|^2  K(\lambda x)\right)d \lambda.
 \end{aligned}
$$

 By using H\"{o}lder inequality,  we obtain

  $$
\begin{aligned}
\int_{\mathbb{R}^N} \frac{|u(x)|^2}{|x|^2} K(x)d x & =-2 \int_1^{\infty} \int_{\mathbb{R}^N}
\frac{u(\lambda x)}{|x|}\left\langle\frac{x}{|x|}, \nabla u(\lambda x)\right\rangle K(\lambda x)d x d \lambda \\
&\ \ \ \ -\int_1^{\infty}\int_{\mathbb{R}^N} \left(\frac{\lambda }{2}|u(\lambda x)|^2  K(\lambda x)\right)d \lambda.\\
& \leq -2 \int_1^{\infty} \frac{d \lambda}{\lambda^{N-1}} \int_{\mathbb{R}^N} \frac{u(y)}{|y|} \frac{\partial u(y)}{\partial r} K\left(y\right)d y \\
& =-\frac{2}{N-2} \int_{\mathbb{R}^N} \frac{u(y)}{|y|} \frac{\partial u(y)}{\partial r} K\left(y\right) d y \\
& \leqslant \frac{2}{N-2}\left(\int_{\mathbb{R}^N} \frac{|u(y)|^2}{|y|^2}K\left(y\right) d y\right)^{1 / 2}\left(\int_{\mathbb{R}^N}\left|\frac{\partial u(y)}{\partial r}\right|^2 K\left(y\right)d y\right)^{1 / 2}.
\end{aligned}
$$
And then we have
$$
\int_{\mathbb{R}^N} \frac{|u(x)|^2}{|x|^2} K(x) d x\leq
\left(\frac{2}{N-2}\right)^2 \int_{\mathbb{R}^N}|\nabla u(x)|^2  K(x) d x.
$$

Next, we prove that $C_N$ is the optimal constant.
Given $\varepsilon>0$, we take the following radial function
$$
V(r)= \begin{cases}1 & \text { if } r \in[0,1], \\  r^{-\gamma} e^{-\frac{|r|^2}{4}} & \text { if } r>1,\end{cases}
$$
where   $\gamma=\frac{N-2+2 \varepsilon}{2}$. Hence, we have %$B_{N,  \varepsilon}=2 /(N-2+2 \varepsilon)$,
$$
V^2(x)= \begin{cases}1 & \text { if } r \in[0,1], \\ r^{-2\gamma} e^{-\frac{|r|^2}{2}} & \text { if } r>1.\end{cases}
$$
For $r>1$, we obtain
\begin{equation*}%\label{hav}
 \begin{aligned}
V^{\prime}(r)&=-\gamma r^{-\gamma-1} e^{-\frac{r^2}{4}}+r^{-\gamma} e^{-\frac{r^2}{4}} \cdot\left(-\frac{r}{2}\right)=-e^{-\frac{r^2}{4}} r^{-\gamma}\left(\frac{\gamma}{r}+\frac{r}{2}\right) =-V(r)\left(\frac{\gamma}{r}+\frac{r}{2}\right),\\
|\nabla V(x)|^2&=\left(V^{\prime}(r)\right)^2=V^2(x)\left(\frac{\gamma^2}{|x|^2}+\gamma+\frac{|x|^2}{4}\right),\\
\frac{V^2(x)}{|x|^2}&=\frac{|\nabla V(x)|^2}{|x|^2\left(\frac{\gamma^2}{|x|^2}+\gamma+\frac{|x|^2}{4}\right)}
=\frac{|\nabla V(x)|^2}{\left(\gamma^2+\gamma|x|^2+\frac{|x|^4}{4}\right)}\geq \frac{|\nabla V(x)|^2}{\gamma^2}.
\end{aligned}
\end{equation*}
Thus, from the above relation, we get
\begin{equation}\label{hav1}
 \begin{aligned}
\int_{\mathbb{R}^N} \frac{V^2(x)}{|x|^2}K(x) d x & =\int_B \frac{V^2(x)}{|x|^2} K(x)d x+\int_{\mathbb{R}^N\backslash B} \frac{V^2(x)}{|x|^2} K(x)d x \\
& \geq\omega_N \int_0^1 r^{N-3} K(r)d r+ \frac{1}{\gamma^2}\int_{\mathbb{R}^N}|\nabla V(x)|^2K(x) d x,
\end{aligned}
\end{equation}
where $\omega_N$ is the measure of the $(N-1)$-dimensional unit sphere. %Letting $\varepsilon \rightarrow 0$, we may therefore conclude this result.

The inequality \eqref{hav1} implies that as long as the constant $1/\gamma^2$ is less than $C_N$,
we can always construct a function $V(x)$ such that the inequality \eqref{wh} fails to hold. Consequently, $C_N$ is shown to be optimal.

\end{proof}

\begin{lemma}\label{l01}
(\cite[Corollary 4.20]{cr2}) It holds that
$$
S_0\|v\|_{K, p+1}^2+\lambda_*\|v\|_{K, 2}^2 \leqslant\|\nabla v\|_{K, 2}^2, \quad v \in X,
$$
where $\lambda_*=\max (1, N / 4)$ and $S_0$ stands for the Sobolev constant:
$$
S_0=\inf \left\{\|\nabla v\|_2^2 \mid v \in C_0^{\infty}\left(\mathbb{R}^N\right),\|v\|_{p+1}=1\right\} .
$$
\end{lemma}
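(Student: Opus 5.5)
The plan is to reduce the weighted inequality to the classical unweighted Sobolev inequality through the substitution $w = K^{1/2} v$. By density it suffices to work with $v \in C_0^\infty(\mathbb{R}^N)$.

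\textbf{Step 1: the substitution.} Put $w=e^{|y|^2/8}v$. Then $\nabla v = e^{-|y|^2/8}\bigl(\nabla w - \tfrac{y}{4}w\bigr)$, so
$$\int_{\mathbb{R}^N}|\nabla v|^2K\,dy=\int_{\mathbb{R}^N}\Bigl(|\nabla w|^2-\tfrac12 w\,y\cdot\nabla w+\tfrac{|y|^2}{16}w^2\Bigr)dy .$$
Integrating by parts in the middle term, $-\tfrac12\int w\,y\cdot\nabla w\,dy=\tfrac N4\int w^2\,dy$. This gives the identity
$$\|\nabla v\|_{K,2}^2=\|\nabla w\|_2^2+\frac N4\|w\|_2^2+\frac1{16}\int_{\mathbb{R}^N}|y|^2w^2\,dy. \qquad (\ast)$$

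\textbf{Step 2: the Sobolev term.} For the $L^{p+1}$ norm we use $p+1=2^*$ and $K\ge 1$. Then $|v|^{2^*}K=|w|^{2^*}e^{-(2^*/2-1)|y|^2/4}\le|w|^{2^*}$, so
$$S_0\|v\|_{K,p+1}^2\le S_0\|w\|_{2^*}^2\le\|\nabla w\|_2^2 .$$

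\textbf{Step 3: the $L^2$ term.} Since $\|v\|_{K,2}^2=\|w\|_2^2$, it remains to show that
$$\frac N4\|w\|_2^2+\frac1{16}\int_{\mathbb{R}^N}|y|^2w^2\,dy\ \ge\ \lambda_*\|w\|_2^2 ,$$
while using at most the part of $\|\nabla w\|_2^2$ not already spent in Step 2. For $\lambda_*=N/4$ this is immediate from $(\ast)$: the $N/4$ term covers it and the Sobolev part uses all of $\|\nabla w\|_2^2$.

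\textbf{The main obstacle: $N=3$.} Here $\lambda_*=1>3/4$, and I would borrow from the harmonic oscillator term. The Heisenberg-type bound $\|\nabla w\|_2^2+\tfrac1{16}\int|y|^2w^2\,dy\ge\tfrac N4\|w\|_2^2$ would have to share $\|\nabla w\|_2^2$ with the Sobolev term, and that is not available for free. My first attempt would be to use that $L^{2^*}_K$ actually carries the decaying factor $e^{-|y|^2/12}$, so a small fraction of $\|\nabla w\|_2^2$ may be spared. Failing that, I would simply invoke the stated source, \cite[Corollary 4.20]{cr2}, for this case, since the lemma is cited there.
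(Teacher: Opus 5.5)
Your argument for $N\ge 4$ is correct and self-contained. The identity $(\ast)$ is right, and $K\ge 1$ with $2^*>2$ gives $\|v\|_{K,p+1}\le\|w\|_{2^*}$. Discarding $\tfrac1{16}\int|y|^2w^2\,dy$ then yields the claim with $\lambda_*=N/4$, and density extends it to $X$ via Lemma~\ref{le0-1}. The paper gives no proof of its own, only the citation, so for $N\ge4$ your route is a genuine addition.

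\textbf{The gap is $N=3$.} There $\lambda_*=1>3/4$, and you prove nothing; deferring to the cited corollary is what the paper does, not an argument. Your suggested repair cannot work, because $1$ is exactly the borderline constant. By $(\ast)$ the lemma for $N=3$ reads $S_0\|we^{-|y|^2/12}\|_6^2\le Q(w)$ with
\[
Q(w)=\|\nabla w\|_2^2+\tfrac1{16}\int_{\mathbb{R}^3}|y|^2w^2\,dy-\tfrac14\|w\|_2^2 .
\]
The function $G(r)=r^{-1}e^{-r^2/8}$ satisfies $-\Delta G+\bigl(\tfrac{r^2}{16}-\tfrac14\bigr)G=0$ for $r>0$ with $G=\tfrac1r+O(r)$. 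So the Green function has vanishing regular part at the origin, which is the Brezis--Nirenberg critical situation. Truncated Aubin--Talenti bubbles concentrating at $0$, where $e^{-|y|^2/12}\approx 1$, give $\|\nabla w\|_2^2/\|we^{-|y|^2/12}\|_6^2\to S_0$. Hence no fixed fraction of $\|\nabla w\|_2^2$ can be spared, and any proof has to be exact at the origin.

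\textbf{The missing idea} fits your $w$-variables well. First reduce to radial $w\ge0$ by Schwarz symmetrization:
\begin{itemize}
\item P\'olya--Szeg\H{o} lowers $\|\nabla w\|_2$;
\item $\int|y|^2w^2$ can only decrease, since $|y|^2$ is radially increasing;
\item $\|w\|_2$ is unchanged;
\item $\int w^6e^{-|y|^2/2}$ can only increase by Hardy--Littlewood, since $e^{-|y|^2/2}$ is radially decreasing.
\end{itemize}
Symmetrizing $v$ directly against the increasing weight $K$ would not work.

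For radial $w$ write $w=Gz$. Since $z=O(r)$ at $0$, integrating by parts gives the exact identity $Q(w)=\int G^2|\nabla z|^2=4\pi\int_0^\infty e^{-r^2/4}z'(r)^2\,dr$. Set $\rho(r)=\int_0^r e^{s^2/4}\,ds$ and $z(r)=\zeta(\rho(r))$. Then $Q(w)=4\pi\int_0^\infty\zeta'(\rho)^2\,d\rho=\|\nabla(\zeta(|x|)/|x|)\|_{L^2(\mathbb{R}^3)}^2\ge S_0\|\zeta(|x|)/|x|\|_6^2$. On the other side,
\[
\|v\|_{K,6}^6=\int_{\mathbb{R}^3}w^6e^{-|y|^2/2}\,dy=4\pi\int_0^\infty\frac{e^{-3r^2/2}}{r^4}\,\zeta(\rho)^6\,d\rho\le 4\pi\int_0^\infty\frac{\zeta(\rho)^6}{\rho^4}\,d\rho=\Bigl\|\frac{\zeta(|x|)}{|x|}\Bigr\|_6^6 ,
\]
because $\rho\le re^{r^2/4}\le re^{3r^2/8}$. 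This gives $S_0\|v\|_{K,6}^2+\|v\|_{K,2}^2\le\|\nabla v\|_{K,2}^2$ for $N=3$ and closes the gap.
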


\begin{lemma}\label{l02}
  $S_K\geq \frac{1}{2}S_0>0$.
\end{lemma}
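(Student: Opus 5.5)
The plan is to bound the three negative pieces of $A(v)=\|\nabla v\|_{K,2}^2-\mu\|v/|y|\|_{K,2}^2-\beta\|v\|_{K,2}^2$ by splitting $\|\nabla v\|_{K,2}^2$ into two halves. One half absorbs the Hardy term. The other half produces the Sobolev term through Lemma \ref{l01}. Each half must also yield an $L^2_K$ remainder, and together these remainders should dominate $\beta\|v\|_{K,2}^2$ with $\beta=(N-2)/4$.

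I would first try the naive route: use \eqref{wh} directly with $\mu C_N\le \frac{(N-2)^2}{8}\cdot\frac{4}{(N-2)^2}=\frac12$, and then Lemma \ref{l01} on the remaining half. This gives $A(v)\ge \frac12 S_0\|v\|_{K,p+1}^2+(\frac{\lambda_*}{2}-\beta)\|v\|_{K,2}^2$. Since $\lambda_*/2\approx N/8<(N-2)/4$ for $N\ge5$, this fails. I expect this to be the main obstacle: plain Hardy discards too much $L^2_K$ mass.

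To overcome it, I would prove an \emph{improved} weighted Hardy inequality. Take $v\in C_c^\infty(\mathbb{R}^N)$, which suffices by density, and set $w=vK^{1/2}$. A direct integration by parts, using $\nabla K^{1/2}=\frac{y}{4}K^{1/2}$ and $\Delta K^{1/2}=(\frac N4+\frac{|y|^2}{16})K^{1/2}$, gives
\begin{equation*}
\int_{\mathbb{R}^N}|\nabla v|^2K\,dy=\int_{\mathbb{R}^N}|\nabla w|^2dy+\int_{\mathbb{R}^N}\Big(\frac{N}{4}+\frac{|y|^2}{16}\Big)w^2\,dy .
\end{equation*}
Applying the classical unweighted Hardy inequality $\int|\nabla w|^2\ge \frac{(N-2)^2}{4}\int w^2/|y|^2$ to $w$, and dropping the $|y|^2/16$ term, yields
\begin{equation*}
\|\nabla v\|_{K,2}^2\ge C_N^{-1}\Big\|\frac{v}{|y|}\Big\|_{K,2}^2+\frac N4\|v\|_{K,2}^2 .
\end{equation*}
This is consistent with the negative term that was dropped in the proof of Lemma \ref{l00}.

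Finally I would combine the two halves. For the first half, the improved inequality and $\mu\le \frac12 C_N^{-1}$ give
\begin{equation*}
\frac12\|\nabla v\|_{K,2}^2\ge \mu\Big\|\frac{v}{|y|}\Big\|_{K,2}^2+\frac N8\|v\|_{K,2}^2 .
\end{equation*}
For the second half, Lemma \ref{l01} with $\lambda_*\ge N/4$ gives
\begin{equation*}
\frac12\|\nabla v\|_{K,2}^2\ge \frac12 S_0\|v\|_{K,p+1}^2+\frac N8\|v\|_{K,2}^2 .
\end{equation*}
Adding the two bounds gives $A(v)\ge \frac12 S_0\|v\|_{K,p+1}^2+(\frac N4-\frac{N-2}{4})\|v\|_{K,2}^2\ge \frac12 S_0\|v\|_{K,p+1}^2$. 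Dividing by $\|v\|_{K,p+1}^2$ and taking the infimum in \eqref{e03} gives $S_K\ge \frac12 S_0$, and $S_0>0$ is the classical Sobolev constant.

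As a by-product, this also shows $A(v)>0$ for $v\neq0$, which the paper uses to define $S_K$. The only routine checks left are the integration-by-parts identity, where boundary terms vanish for compact support, and the density passage to $X$.
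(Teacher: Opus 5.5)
Your proof is correct. It is not the paper's argument, though: the paper's proof is exactly the naive route you set aside.

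The paper first drops $\int(\frac12|\nabla v|^2-\mu\frac{|v|^2}{|y|^2})K\,dy\ge 0$ using Lemma \ref{l00}. It then writes $\frac12|\nabla v|^2-\beta|v|^2=\frac12\bigl(|\nabla v|^2-\frac{N-2}{8}|v|^2\bigr)$. That is an arithmetic slip: the correct coefficient is $2\beta=\frac{N-2}{2}$. With the correct coefficient, the final appeal to Lemma \ref{l01} needs $\frac{N-2}{2}\le\lambda_*=\max(1,\frac N4)$, which holds only for $N=3,4$. So the paper's argument proves the lemma only in those dimensions. Splitting that half between Lemma \ref{l01} and the Poincar\'e inequality \eqref{po} yields only the constant $\frac2N S_0$ when $N\ge5$.

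Your ground-state substitution $w=vK^{1/2}$ supplies exactly what is missing. The Hardy half now also returns $\frac N8\|v\|_{K,2}^2$. Added to the $\frac N8\|v\|_{K,2}^2$ coming from Lemma \ref{l01}, this beats $\beta=\frac{N-2}{4}$ in every dimension $N\ge3$.

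The cost is one integration-by-parts identity. The gain is the stronger bound $A(v)\ge\frac12S_0\|v\|_{K,p+1}^2+\frac12\|v\|_{K,2}^2$. This gives $A(v)>0$ for $v\neq0$, which the paper needs in order to define $S_K$ but circularly deduces from this very lemma. It also improves the constant $\frac14$ in Lemma \ref{l022} to $\frac12$.

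Finally, you only use $\lambda_*\ge\frac N4$, and that follows from your own identity. Since $K^{1-(p+1)/2}\le1$, you get $\|v\|_{K,p+1}\le\|w\|_{L^{p+1}}$, hence $\|\nabla v\|_{K,2}^2\ge S_0\|v\|_{K,p+1}^2+\frac N4\|v\|_{K,2}^2$. Your proof can therefore be made independent of the cited Corollary.
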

\begin{proof}
By Lemma \ref{l00} and Lemma \ref{l01}, we have

\begin{align*}
\left.\frac{A(v)}{\|v\|_{K, p+1}^2} \right.&=\frac{\int_{ \mathbb{R}^N }\left(|\nabla v|^2-\mu \frac{|v|^2}{|y|^2}-\beta|v|^2\right) K(y)d y }{\|v\|_{K, p+1}^2}\\
&=\frac{\int_{ \mathbb{R}^N }\left(\frac{1}{2}|\nabla v|^2-\mu \frac{|v|^2}{|y|^2}+\frac{1}{2}|\nabla v|^2-\beta|v|^2\right) K(y)d y }{\|v\|_{K, p+1}^2}\\
&\geq\frac{\frac{1}{2}\int_{ \mathbb{R}^N }\left(|\nabla v|^2-\frac{N-2}{8}|v|^2\right) K(y)d y }{\|v\|_{K, p+1}^2}\\
&\geq\frac{\frac{1}{2}\int_{ \mathbb{R}^N }\left(|\nabla v|^2-\lambda^{\ast}|v|^2\right) K(y)d y }{\|v\|_{K, p+1}^2}\geq \frac{1}{2}S_0.
\end{align*}

\end{proof}

\begin{lemma}\label{l022}
  For $v\in X$, we have $A(v)\geq \frac{1}{4} \int_{\mathbb{R}^N}|v|^2 K(y) d y$.
\end{lemma}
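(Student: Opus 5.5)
The plan is to split $A(v)$ into a Hardy part and a Poincar\'e part, as in the proof of Lemma \ref{l02}, and to control the negative terms $-\mu\int |v|^2|y|^{-2}K$ and $-\beta\int|v|^2K$ separately. Throughout I work with $v\in C_c^\infty(\mathbb{R}^N)$ and conclude by density. Since $A$ and $\|v\|_{K,2}^2$ are continuous on $X$ (by Lemma \ref{l00} and Lemma \ref{le0-1}), the inequality then passes to all of $X$.

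\textbf{Step 1 (Hardy term).} By the weighted Hardy inequality \eqref{wh} with $C_N=(2/(N-2))^2$ and the assumption $\mu\le (N-2)^2/8$, we get
\[
\mu\int_{\mathbb{R}^N}\frac{|v|^2}{|y|^2}K\,dy\le \frac{(N-2)^2}{8}\cdot\frac{4}{(N-2)^2}\|\nabla v\|_{K,2}^2=\frac12\|\nabla v\|_{K,2}^2 .
\]
Hence
\[
A(v)\ge \frac12\|\nabla v\|_{K,2}^2-\beta\|v\|_{K,2}^2 .
\]

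\textbf{Step 2 (Poincar\'e term).} Apply the Poincar\'e inequality \eqref{po} with $\lambda_1=N/2$, which gives $\frac12\|\nabla v\|_{K,2}^2\ge \frac N4\|v\|_{K,2}^2$. Alternatively, Lemma \ref{l01} gives $\lambda_*\ge N/4$, which suffices. Since $\beta=(N-2)/4$, we obtain
\[
A(v)\ge\Big(\frac N4-\frac{N-2}{4}\Big)\|v\|_{K,2}^2=\frac12\|v\|_{K,2}^2\ge\frac14\int_{\mathbb{R}^N}|v|^2K\,dy,
\]
which is the claim, in fact with room to spare.

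\textbf{Main obstacle.} The only delicate point is making sure the constants line up: the Hardy constant in Lemma \ref{l00} must be exactly $C_N=(2/(N-2))^2$, so that $\mu C_N\le \frac12$ leaves half of the gradient norm available, and the first eigenvalue of $L$ must be $N/2$. Both facts are taken from earlier results. If one prefers not to rely on the sharp value $\lambda_1=N/2$, I would instead use only $\lambda_*=\max(1,N/4)$ from Lemma \ref{l01}. That gives $\frac12\lambda_*-\beta\ge \frac N8-\frac{N-2}{4}$, which is weaker and can fail for large $N$, so the Poincar\'e route via \eqref{po} is the one I would commit to.
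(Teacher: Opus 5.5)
Your argument is correct and is essentially the paper's: you split $\|\nabla v\|_{K,2}^2$ in half, absorb the $\mu$-term with the weighted Hardy inequality \eqref{wh} (since $\mu C_N\le\frac12$), and control $\beta\|v\|_{K,2}^2$ with the Poincar\'e inequality \eqref{po}; your bookkeeping $\frac N4-\frac{N-2}{4}=\frac12$ is actually cleaner than the paper's intermediate line, which writes $\frac{N-2}{4}$ inside the factor $\frac12$ where $2\beta=\frac{N-2}{2}$ belongs, and it gives the sharper constant $\frac12$. Do drop the aside in Step 2 that $\lambda_*\ge N/4$ from Lemma \ref{l01} ``suffices'', since, as you note yourself later, it only yields $\frac N8-\frac{N-2}{4}=\frac{4-N}{8}$, which is negative for $N\ge 5$.
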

\begin{proof}
By Lemma \ref{l00} and  \eqref{po}, we have

$$
\begin{aligned}
A(v)&=\int_{\mathbb{R}^N}\left(|\nabla v|^2-\mu \frac{|v|^2}{|y|^2}-\beta|v|^2\right) K(y) d y\\
&=\int_{\mathbb{R}^N}\left(\frac{1}{2}|\nabla v|^2-\mu \frac{|v|^2}{|y|^2}+\frac{1}{2}|\nabla v|^2-\beta|v|^2\right) K(y) d y\\
&\geq \frac{1}{2} \int_{\mathbb{R}^N}\left(|\nabla v|^2-\frac{N-2}{4}|v|^2\right) K(y) d y\\
&\geq   \frac{1}{2} \int_{\mathbb{R}^N}\left(|\nabla v|^2-\frac{N-2}{4}|v|^2\right) K(y) d y\\
&\geq   \frac{1}{4} \int_{\mathbb{R}^N}|v|^2 K(y) d y.
\end{aligned}
$$
\end{proof}

\begin{lemma}\label{l03}Let $v \in X$. We obtain
  \begin{description}
    \item[(1)] If $0<\sqrt{A(v)}<r(\delta)$, then $D_{K, \delta}(v)>0$. In particular, if $0<\sqrt{A(v)}<r(1)$, then $D_K(v)>0$;
    \item[(2)] If $D_{K, \delta}(v)<0$, then $\sqrt{A(v)}>r(\delta)$. In particular, if $D_K(v)<0$, then $\sqrt{A(v)}>r(1) ;$
    \item[(3)] If $D_{K, \delta}(v)=0$, then $\sqrt{A(v)} \geqslant r(\delta)$ or $A(v)=0$.
    In particular, if $D_K(v)=0$, then $\sqrt{A(v)} \geqslant r(1)$ or $A(v)=0$;
    \item[(4)] If $D_{K, \delta}(v)=0$ and $A(v) \neq 0$, then $E_K(v)>0$ for $0<\delta<\frac{p+1}{2}, E_K(v)=0$ for $\delta=\frac{p+1}{2}, E_{k}(v)<0$ for $\delta>\frac{p+1}{2}$.
  \end{description}
\end{lemma}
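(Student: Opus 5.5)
The whole lemma rests on the definition \eqref{e03} of $S_K$, which gives $B(v)=\|v\|_{K,p+1}^{p+1}\le \bigl(S_K^{-1}A(v)\bigr)^{\frac{p+1}{2}}$ for every $v\in X$. By Lemma \ref{l02} we also have $A(v)\ge 0$, with $A(v)>0$ for $v\ne0$. Writing $a:=\sqrt{A(v)}$, we get
$$B(v)\le S_K^{-\frac{p+1}{2}}a^{p+1}=\bigl(S_K^{-\frac{p+1}{2}}a^{p-1}\bigr)A(v).$$
Since $p+1=2^*$, we have $\frac{p-1}{2}=\frac{2}{N-2}$. Hence $S_K^{-\frac{p+1}{2}}a^{p-1}<\delta$ is equivalent to $a^{\frac{4}{N-2}}<\delta S_K^{\frac{N}{N-2}}$, that is, to $a<\delta^{\frac{N-2}{4}}S_K^{\frac N4}$.

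This exposes a bookkeeping issue. The threshold the computation produces is $\delta^{\frac{N-2}{4}}S_K^{\frac N4}$, whereas $r(\delta)$ is defined as $\delta^{\frac{N-2}{2}}S_K^{\frac N2}$. The discrepancy comes from whether one compares $\sqrt{A(v)}$ or $A(v)$ with the threshold. The cleanest fix is to read the conditions in (1)--(3) as $A(v)<r(\delta)$, and so on; equivalently, the threshold for $\sqrt{A(v)}$ is $\sqrt{r(\delta)}$. I would carry out the argument with the computed threshold $\rho(\delta):=\delta^{\frac{N-2}{4}}S_K^{\frac N4}$ for $\sqrt{A}$, and note that it matches $r(\delta)$ once $A$ is compared directly with $r(\delta)$. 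This normalization is the only real obstacle; everything else is routine algebra.

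\textbf{(1).} If $0<a<\rho(\delta)$, then $B(v)\le S_K^{-\frac{p+1}{2}}a^{p-1}A(v)<\delta A(v)$, where strictness uses $A(v)>0$. Hence $D_{K,\delta}(v)>0$. Taking $\delta=1$ gives the particular case.

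\textbf{(2).} If $D_{K,\delta}(v)<0$, then $v\ne0$ and so $A(v)>0$, and $\delta A(v)<B(v)\le S_K^{-\frac{p+1}{2}}a^{p-1}A(v)$. Dividing by $A(v)$ gives $a>\rho(\delta)$.

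\textbf{(3).} If $D_{K,\delta}(v)=0$ and $A(v)\ne0$, then $\delta A(v)=B(v)\le S_K^{-\frac{p+1}{2}}a^{p-1}A(v)$. Dividing as in (2) gives $a\ge\rho(\delta)$.

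\textbf{(4).} Substitute $B(v)=\delta A(v)$ into $E_K$:
$$E_K(v)=\Bigl(\tfrac12-\tfrac{\delta}{p+1}\Bigr)A(v).$$
Since $A(v)>0$, the sign of $E_K(v)$ is the sign of $\frac12-\frac{\delta}{p+1}$. This is positive, zero or negative according as $\delta<\frac{p+1}{2}$, $\delta=\frac{p+1}{2}$ or $\delta>\frac{p+1}{2}$.
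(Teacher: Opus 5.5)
Your proof is correct and follows the paper's route: bound $B(v)\le (A(v)/S_K)^{\frac{p+1}{2}}$ from the definition of $S_K$, factor out $A(v)>0$, and compare $\delta$ with $S_K^{-\frac{p+1}{2}}A(v)^{\frac{2}{N-2}}$; then (4) follows by substituting $B(v)=\delta A(v)$ into $E_K$. Your normalization remark is also right: the paper's own factorization $A(v)\bigl(\delta-S_K^{-\frac{p+1}{2}}A(v)^{\frac{2}{N-2}}\bigr)$ changes sign exactly at $A(v)=r(\delta)$ (its proof of (1) even begins with ``$0<A(v)<r(\delta)$''), so what is actually proved is (1)--(3) with $A(v)$ in place of $\sqrt{A(v)}$, and the literal $\sqrt{A(v)}$ version genuinely fails whenever $r(\delta)>1$ (rescale a near-minimizer of $S_K$), which makes this a slip in the statement, not in your argument.
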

\begin{proof}
 (1) Since $0<A(v)<r(\delta)$, by the Lemma \ref{l02} and \eqref{e03}, we have from the assumption
  $0<\sqrt{A(v)}<r(\delta):=\delta^{\frac{N-2}{2}} S_K^{\frac{N}{2}}$, and we obtain
$$\begin{aligned}
D_{K, \delta}(v) & =\delta A(v)-B(v) \\
& \geq \delta A(v)-\left(\frac{A(v)}{S_K}\right)^{\frac{p+1}{2}} \\
& \geq A(v)\left(\delta-\left(\frac{1}{S_K}\right)^{\frac{p+1}{2}}\left(A(v)\right)^{\frac{2}{N-2}}\right)>0.
\end{aligned}$$

(2) By the assumption $D_{K, \delta}(v)<0$ and \eqref{e03}, we have
 $$\begin{aligned}
0> D_{K, \delta}(v) & =\delta A(v)-B(v) \\
& \geq \delta A(v)-\left(\frac{A(v)}{S_K}\right)^{\frac{p+1}{2}} \\
& \geq A(v)\left(\delta-\left(\frac{1}{S_K}\right)^{\frac{p+1}{2}}\left(A(v)\right)^{\frac{2}{N-2}}\right).
\end{aligned}$$
Hence, $\sqrt{A(v)}>r(\delta)$.

(3) By the assumption $D_{K, \delta}(v)=0$ and \eqref{e03}, we have

 $$\begin{aligned}
0= D_{K, \delta}(v) & =\delta A(v)-B(v) \\
& \geq \delta A(v)-\left(\frac{A(v)}{S_K}\right)^{\frac{p+1}{2}} \\
& \geq A(v)\left(\delta-\left(\frac{1}{S_K}\right)^{\frac{p+1}{2}}\left(A(v)\right)^{\frac{2}{N-2}}\right).
\end{aligned}$$
Hence, $\sqrt{A(v)}\geq r(\delta)$ or $v=0$.

(4) We easily know that

$$\begin{aligned}
 E_K(v) & =\frac{1}{2}A(v)-\frac{1}{p+1}B(v) \\
 & =\left(\frac{1}{2}-\frac{\delta}{p+1}\right)A(v)+\frac{1}{p+1} (\delta A(v)-B(v))\\
 & =\left(\frac{1}{2}-\frac{\delta}{p+1}\right)A(v)+\frac{1}{p+1} D_{K, \delta}(v) \\
 & =\left(\frac{1}{2}-\frac{\delta}{p+1}\right)A(v) .
\end{aligned}$$
Then we can prove the conclusion.
\end{proof}
\begin{lemma}\label{l04}
\begin{description}
  \item[(1)]  $d(\delta) \geqslant a(\delta) r^2(\delta)$ for $a(\delta)=\frac{1}{2}-\frac{\delta}{p+1}, 0<\delta<\frac{p+1}{2}$;
  \item[(2)] $\lim _{\delta \rightarrow 0} d(\delta)=0, d\left(\frac{p+1}{2}\right)=0$ and $d(\delta)<0$ for $\delta>\frac{p+1}{2}$;
  \item[(3)] $d(\delta)$ is increasing on $0<\delta \leqslant 1$, decreasing on $1 \leqslant \delta \leqslant \frac{p+1}{2}$ and takes the maximum $d=d(1)$ at $\delta=1$.
\end{description}
\end{lemma}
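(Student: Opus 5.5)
We use the scaling structure of $\mathcal{N}_\delta$. For $v\in X$ with $A(v)>0$ (recall $A(v)>0$ by Lemma \ref{l02}), set $\lambda=\lambda(\delta,v)>0$ as the unique value with $D_{K,\delta}(\lambda v)=0$:
$$\lambda(\delta,v)^{p-1}=\frac{\delta A(v)}{B(v)}.$$
Then $\lambda v\in\mathcal{N}_\delta$, and every element of $\mathcal{N}_\delta$ has this form. On $\mathcal{N}_\delta$ we have $B=\delta A$, so, as in Lemma \ref{l03}(4),
$$E_K(\lambda v)=a(\delta)\lambda^2A(v)=a(\delta)\,\delta^{\frac{2}{p-1}}\frac{A(v)^{\frac{p+1}{p-1}}}{B(v)^{\frac{2}{p-1}}},\qquad a(\delta)=\frac12-\frac{\delta}{p+1}.$$
Hence
$$d(\delta)=a(\delta)\,\delta^{\frac{2}{p-1}}\,m,\qquad m:=\inf_{v\neq0}\frac{A(v)^{\frac{p+1}{p-1}}}{B(v)^{\frac{2}{p-1}}}.$$
The quantity $m$ does not depend on $\delta$. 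Since $\frac{p+1}{p-1}=\frac N2$ and $\frac{2}{p-1}=\frac{N-2}{2}$, the definition \eqref{e03} gives $m=S_K^{N/2}>0$. This explicit formula is the main tool for all three parts.

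\textbf{Part (1).} For $v\in\mathcal{N}_\delta$, Lemma \ref{l03}(3) gives $\sqrt{A(v)}\ge r(\delta)$. Lemma \ref{l03}(4) then gives $E_K(v)=a(\delta)A(v)\ge a(\delta)r^2(\delta)$ whenever $a(\delta)>0$. Taking the infimum proves (1).

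\textbf{Part (2).} From the formula, $d(\delta)\to0$ as $\delta\to0$, because $\delta^{(N-2)/2}\to0$ and $a(\delta)\to\frac12$. Also $d(\frac{p+1}{2})=0$ since $a(\frac{p+1}{2})=0$. For $\delta>\frac{p+1}{2}$ we have $a(\delta)<0$, so $d(\delta)<0$. This uses $m<\infty$, which holds because $X\setminus\{0\}$ is nonempty. Strictly, for negative $a$ the infimum of $a\lambda^2A$ over the manifold is $a\cdot\sup\lambda^2A$, which may equal $-\infty$. Either way $d(\delta)<0$, so the claim holds.

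\textbf{Part (3).} Let $f(\delta)=\big(\frac12-\frac{\delta}{p+1}\big)\delta^{\frac{2}{p-1}}$ on $(0,\frac{p+1}{2}]$. Then
$$f'(\delta)=\delta^{\frac{2}{p-1}-1}\Big[\frac{1}{p-1}-\frac{2\delta}{p+1}\cdot\frac{1}{p-1}\cdot\frac{p+1}{2}\Big]\cdot\ldots$$
Computing directly, $f'(\delta)=\frac{1}{p-1}\delta^{\frac{2}{p-1}-1}(1-\delta)$. So $f$ increases on $(0,1]$, decreases on $[1,\frac{p+1}{2}]$, and attains its maximum at $\delta=1$, where $d(1)=d$ by definition of $d$ as $\inf_{\mathcal N}E_K$.

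The main obstacle is the rigorous identification $d(\delta)=f(\delta)S_K^{N/2}$, in particular that $\lambda v$ sweeps out all of $\mathcal{N}_\delta$ and that the infimum over $\mathcal{N}_\delta$ equals the scale-invariant infimum. This is handled by the bijection $v\mapsto\lambda(\delta,v)v$ between rays and points of $\mathcal{N}_\delta$, together with the $0$-homogeneity of $A^{N/2}/B^{(N-2)/2}$. Once this is established, (1)--(3) are elementary calculus.
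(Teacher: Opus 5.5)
Your Part (1) does not prove what it claims, and your own closed form exposes this.

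\textbf{The problem with Part (1).} For $0<\delta\le\frac{p+1}{2}$ you derived $d(\delta)=a(\delta)\,\delta^{(N-2)/2}S_K^{N/2}$. By definition $r(\delta)=\delta^{(N-2)/2}S_K^{N/2}$, so in fact $d(\delta)=a(\delta)r(\delta)$ exactly. The inequality $d(\delta)\ge a(\delta)r^2(\delta)$ is therefore equivalent to $r(\delta)\le 1$, and this fails in general. For $\mu=0$, Lemma~\ref{l01} together with $\beta=\frac{N-2}{4}<\lambda_*$ gives $S_K\ge S_0>1$ (e.g.\ $S_0=3(\pi/2)^{4/3}\approx 5.48$ for $N=3$). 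At $\delta=1$ the claim would then read $\frac1N S_K^{N/2}\ge\frac1N S_K^{N}$, which is false.

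The failing step is your use of Lemma~\ref{l03}(3) in the form $\sqrt{A(v)}\ge r(\delta)$. On $\mathcal N_\delta$ one has $\delta A(v)=B(v)\le (A(v)/S_K)^{(p+1)/2}$. Hence $A(v)^{(p-1)/2}\ge\delta S_K^{(p+1)/2}$, i.e.\ $A(v)\ge\delta^{(N-2)/2}S_K^{N/2}=r(\delta)$. This bounds $A(v)$, not $\sqrt{A(v)}$.

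The paper's proof of (1) is the same two-line argument and carries the same mis-normalization; its proof of Lemma~\ref{l03} actually begins from ``$0<A(v)<r(\delta)$''. What is true, and what your formula gives with equality, is $d(\delta)\ge a(\delta)r(\delta)$. Equivalently, (1) holds as written only after redefining $r(\delta)=\delta^{(N-2)/4}S_K^{N/4}$. Rather than re-deriving (1) from the cited lemma, you should have flagged that it contradicts the identity you had just proved.

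\textbf{Parts (2) and (3).} These are correct and take a genuinely different route from the paper. The paper works fiber by fiber in the standard modified-potential-well style:
\begin{itemize}
\item For $w\in\mathcal N_{\delta''}$ it rescales to $s(\delta')w\in\mathcal N_{\delta'}$. It then uses the sign of $\frac{d}{ds}E_K(sw)=s(1-\delta''s^{p-1})A(w)$ to get $E_K(s(\delta')w)<E_K(w)-\varepsilon(\delta',\delta'')$.
\item For $\delta\to0$ it only shows $E_K(s(\delta)v)\to0$ for a fixed $v$. That is an upper bound, which must be paired with $d(\delta)\ge0$.
\end{itemize}
You instead reduce $d(\delta)$ to the $\delta$-independent, $0$-homogeneous quotient $A^{N/2}/B^{(N-2)/2}$. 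You identify its infimum with $S_K^{N/2}$ through \eqref{e03} and read everything off from $f'(\delta)=\frac{1}{p-1}\delta^{\frac{2}{p-1}-1}(1-\delta)$.

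Your route gives strict monotonicity, both endpoint values and the exact depth $d=\frac1N S_K^{N/2}$ in one stroke, and it does not depend on how $r(\delta)$ is normalized. The paper's fiber comparison only needs some positive lower bound for $A$ on $\mathcal N_{\delta''}$. Its $\varepsilon$ uses $r^2(\delta'')$, which is mis-normalized again but harmless there, since any positive lower bound suffices. Your caveat for $\delta>\frac{p+1}{2}$ is also correct: the closed form no longer applies, since the infimum becomes $a(\delta)$ times a supremum and may be $-\infty$, yet $d(\delta)<0$ still holds.
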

\begin{proof}
(1) If $v \in \mathcal{N}_\delta$, by Lemma \ref{l03} (3), then $\sqrt{A(v)} \geqslant r(\delta)$. Moreover, we can deduce
\begin{equation} \label{e05}
\begin{aligned}
E_K(v) & =\frac{1}{2}A(v)-\frac{1}{p+1}B(v) \\
& =\left(\frac{1}{2}-\frac{\delta}{p+1}\right)A(v)+\frac{1}{p+1} D_{K, \delta}(v) \\
& =\left(\frac{1}{2}-\frac{\delta}{p+1}\right)A(v) \geq a(\delta) r^2(\delta).
\end{aligned}
\end{equation}
Hence, $d(\delta) \geqslant a(\delta) r^2(\delta)$.

(2)
 We easily know that
$$
E_K(s v)=\frac{s^2}{2}A(v)-\frac{s^{p+1}}{p+1}B(v) .
$$
Hence,
\begin{equation} \label{e04}
\lim _{s \rightarrow 0} E_K(s v)=0.
\end{equation}
And if we let $s v \in \mathcal{N}_\delta$, then $s v$ satisfies

$$
0=D_{K, \delta}(s v)=\delta s^2A(v)-s^{p+1}B(v) .
$$
Then, we obtain
\begin{equation}\label{e06}
s(\delta)=\left(\frac{\delta A(v)}{B(v)}\right)^{\frac{1}{p-1}},
\end{equation}
which yields
\begin{equation*}
\lim _{\delta \rightarrow 0} s(\delta)=0.
\end{equation*}
Now (\ref{e04}) implies that
\begin{equation*}
\lim _{\delta \rightarrow 0} E_K(s v)=\lim _{\lambda \rightarrow 0} E_K(s v)=0,
\end{equation*}
and
\begin{equation*}
\lim _{\delta \rightarrow 0} d(\delta)=0.
\end{equation*}
It is easy to see that from (\ref{e05})
$$
d\left(\frac{p+1}{2}\right)=0 \text { and } d(\delta)<0 \text { for } \delta>\frac{p+1}{2} .
$$

(3) We need to prove that for any $0<\delta^{\prime}<\delta^{\prime \prime}<1$
or $1<\delta^{\prime \prime}<\delta^{\prime}<\frac{p+1}{2}$ and for
any $w \in \mathcal{N}_{\delta^{\prime \prime}}$, there is a
 $v \in \mathcal{N}_{\delta^{\prime}}$ and a constant $\varepsilon\left(\delta^{\prime}, \delta^{\prime \prime}\right)$
 such that $E_K(v)<E_K(w)-\varepsilon\left(\delta^{\prime}, \delta^{\prime \prime}\right)$.
 Indeed, by  (\ref{e06}), we easily know that $D_{K, \delta^{\prime \prime}}(s(\delta^{\prime \prime}) w)=0$
 and $s\left(\delta^{\prime \prime}\right)=1$. Let $h(s)=E_K(s w)$
  we have
$$
\begin{aligned}
\frac{d}{d s} h(s) & =sA(w)-s^pB(w) \\
& = sA(w)-\delta^{\prime\prime} s^p A(w)+s^pD_{K,\delta^{\prime\prime}}(w)\\
& = (s-\delta^{\prime\prime} s^p) A(w)+s^pD_{K,\delta^{\prime\prime}}(w)\\
& = s(1-\delta^{\prime\prime} s^{p-1}) A(w).
\end{aligned}
$$
Take $v=s\left(\delta^{\prime}\right) w$, then $v \in \mathcal{N}_{\delta^{\prime}}$.
For $0<\delta^{\prime}<\delta^{\prime \prime}<1$,  $s\in (s(\delta^{\prime}), s(\delta^{\prime\prime}))=(s(\delta^{\prime}), 1)$, we obtain
$$
\begin{aligned}
E_K(w)-E_K(v) & =h(1)-h\left(s\left(\delta^{\prime}\right)\right) \\
& >\left(1-\delta^{\prime \prime}\right) r^2\left(\delta^{\prime \prime}\right) s\left(\delta^{\prime}\right)\left(1-s\left(\delta^{\prime}\right)\right) \equiv \varepsilon\left(\delta^{\prime}, \delta^{\prime \prime}\right).
\end{aligned}
$$
For $1<\delta^{\prime \prime}<\delta^{\prime}<\frac{p+1}{2}$, we obtain
$$
\begin{aligned}
E_K(w)-E_K(v) & =h(1)-h\left(s\left(\delta^{\prime}\right)\right) \\
& >\left(\delta^{\prime \prime}-1\right) r^2\left(\delta^{\prime \prime}\right) s\left(\delta^{\prime \prime}\right)\left(s\left(\delta^{\prime}\right)-1\right) \equiv \varepsilon\left(\delta^{\prime}, \delta^{\prime \prime}\right).
\end{aligned}
$$
Hence, the proof is complete.
\end{proof}

\begin{lemma}\label{l06}
Let $v \in X$ and $0<\delta<\frac{p+1}{2}$. If $E_K(v) \leqslant d(\delta)$, then we have
 \begin{description}
   \item[(1)] If $D_{K, \delta}(v)>0$, then $A(v)<\frac{d(\delta)}{a(\delta)}$, where $a(\delta)=\frac{1}{2}-\frac{\delta}{p+1}$. In particular, if $E_K(v) \leqslant d$ and $D_K(v)>0$, then
\begin{equation*}
A(v)<\frac{2(p+1)}{p-1} d;
\end{equation*}
   \item[(2)] If $A(v)>\frac{d(\delta)}{a(\delta)}$, then $D_{K, \delta}(v)<0$. In particular, if $E_K(v) \leqslant d$ and
\begin{equation*}
A(v)>\frac{2(p+1)}{p-1} d;
\end{equation*}
then $D_K(v)<0$.
   \item[(3)] If $D_{K, \delta}(v)=0$, then $A(v) \leqslant \frac{d(\delta)}{a(\delta)}$. In particular, if $E_K(v) \leqslant d$ and $D_K(v)=0$, then
\begin{equation*}
A(v) \leqslant \frac{2(p+1)}{p-1} d.
\end{equation*}
\end{description}
\end{lemma}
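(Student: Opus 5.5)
The whole lemma rests on one algebraic identity, already used in the proof of Lemma \ref{l04}(1):
$$E_K(v)=a(\delta)A(v)+\frac{1}{p+1}D_{K,\delta}(v),\qquad a(\delta)=\frac12-\frac{\delta}{p+1}>0 \ \text{ for } 0<\delta<\tfrac{p+1}{2}.$$
The hypothesis $E_K(v)\le d(\delta)$ enters only through this identity. The ``in particular'' statements follow by taking $\delta=1$, where $d(1)=d$ and $a(1)=\frac12-\frac1{p+1}=\frac{p-1}{2(p+1)}$, so that $d(1)/a(1)=\frac{2(p+1)}{p-1}d$.

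For (1), suppose $D_{K,\delta}(v)>0$. The identity gives $a(\delta)A(v)<E_K(v)\le d(\delta)$. Dividing by $a(\delta)>0$ yields $A(v)<d(\delta)/a(\delta)$. The positivity of $A$ (Lemma \ref{l02}) is not needed here; the only point is that the $D_{K,\delta}$ term enters with the positive coefficient $1/(p+1)$.

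For (3), if $D_{K,\delta}(v)=0$ then $E_K(v)=a(\delta)A(v)\le d(\delta)$, so $A(v)\le d(\delta)/a(\delta)$. Alternatively, when $v\neq 0$ we have $v\in\mathcal N_\delta$ and the bound also follows from the definition of $d(\delta)$.

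For (2), I would argue by contradiction using (1) and (3). Let $A(v)>d(\delta)/a(\delta)$. If $D_{K,\delta}(v)>0$, then (1) gives $A(v)<d(\delta)/a(\delta)$. If $D_{K,\delta}(v)=0$, then (3) gives $A(v)\le d(\delta)/a(\delta)$. Either way we contradict the assumption, so $D_{K,\delta}(v)<0$. One can also see this directly: the identity gives $\frac{1}{p+1}D_{K,\delta}(v)=E_K(v)-a(\delta)A(v)<d(\delta)-d(\delta)=0$.

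There is no real obstacle. The only care needed is the sign of $a(\delta)$, which is why the restriction $\delta<\frac{p+1}{2}$ appears, and checking that the constant at $\delta=1$ is exactly $\frac{2(p+1)}{p-1}$.
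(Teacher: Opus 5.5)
Your argument is correct and is essentially the paper's: everything follows from the identity $E_K(v)=a(\delta)A(v)+\frac{1}{p+1}D_{K,\delta}(v)$ with $a(\delta)>0$. You also track the sign of the $D_{K,\delta}$ term more carefully than the paper's display, which writes $E_K(v)=a(\delta)A(v)$ even in case (1).

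One correction to your aside in (3): it is backwards. Membership $v\in\mathcal N_\delta$ gives $E_K(v)\ge d(\delta)$, i.e.\ the lower bound $A(v)\ge d(\delta)/a(\delta)$. The upper bound must therefore come from the hypothesis $E_K(v)\le d(\delta)$, exactly as in your main argument.
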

\begin{proof}
(1) For $0<\delta<\frac{p+1}{2}$, we see that
\begin{equation}\label{e07}
\begin{aligned}
E_K(v) & =\frac{1}{2}A(v)-\frac{1}{p+1}B(v) \\
& =\left(\frac{1}{2}-\frac{\delta}{p+1}\right)A(v)+\frac{1}{p+1} D_{K,\delta}(v) \\
& =a(\delta)A(v)\leq d(\delta).
\end{aligned}
\end{equation}
Therefore,
$$
A(v)<\frac{d(\delta)}{a(\delta)} .
$$
Finally, (2) and (3) follow from (\ref{e07}).
\end{proof}

\begin{lemma}\label{l07}
Let $v \in X$. We have
\begin{description}
  \item[(1)] 0 is away from both $\mathcal{N}$ and $\mathcal{N}_{-}$,
  i.e. $\operatorname{dist}(0, \mathcal{N})>0, \operatorname{dist}\left(0, N_{-}\right)>0$;
  \item[(2)]  For any $\alpha>0$, the set $E^\alpha \cap \mathcal{N}_{+}$ is bounded in $X$.
\end{description}
\end{lemma}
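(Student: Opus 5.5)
For part (1) I will use Lemma \ref{l03} together with the coercivity of $A$ on $X$. If $v\in\mathcal{N}$, then $D_K(v)=0$ and $v\neq 0$. Lemma \ref{l03}(3) with $\delta=1$ then gives $\sqrt{A(v)}\geq r(1)$ or $A(v)=0$. The second alternative is excluded, because $A$ is comparable to $\|\nabla v\|_{K,2}^2$ on $X$.

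For the lower bound I plan to rewrite $A(v)=\tfrac12\|\nabla v\|_{K,2}^2+\big(\tfrac12\|\nabla v\|_{K,2}^2-\mu\|v/|y|\|^2_{K,2}-\beta\|v\|_{K,2}^2\big)$. I then control the bracket using the weighted Hardy inequality \eqref{wh} with $\mu\le (N-2)^2/8$, so that $\mu C_N\le \tfrac12$, and the Poincar\'e inequality \eqref{po} to absorb $\beta\|v\|_{K,2}^2$. This is the step I expect to be the main obstacle. The constants must be checked honestly, since the paper's own splitting in Lemmas \ref{l02} and \ref{l022} is somewhat loose. In particular I need $\mu C_N+\beta/\lambda_1<1$, i.e. $\tfrac12+\frac{N-2}{2N}<1$, which holds. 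This yields
\[
A(v)\ge c_N\|\nabla v\|_{K,2}^2, \qquad c_N=\tfrac12-\tfrac{N-2}{2N}=\tfrac1N>0.
\]
The upper bound $A(v)\le \|\nabla v\|_{K,2}^2$ is immediate from the definition.

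With this equivalence, any $v\in\mathcal N$ satisfies $\|\nabla v\|_{K,2}^2\ge A(v)\ge r(1)^2$, so $\operatorname{dist}(0,\mathcal N)\ge r(1)>0$. For $v\in\mathcal N_-$, Lemma \ref{l03}(2) gives $\sqrt{A(v)}>r(1)$, and hence $\|\nabla v\|_{K,2}\ge\sqrt{A(v)}> r(1)$. So $\operatorname{dist}(0,\mathcal N_-)\ge r(1)>0$ as well.

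For part (2), let $v\in E^\alpha\cap\mathcal N_+$. I will use the identity from the proof of Lemma \ref{l06}:
\[
E_K(v)=\Big(\frac12-\frac1{p+1}\Big)A(v)+\frac1{p+1}D_K(v).
\]
Since $D_K(v)>0$, this gives $\frac{p-1}{2(p+1)}A(v)<E_K(v)<\alpha$. Therefore $A(v)<\frac{2(p+1)}{p-1}\alpha$. The coercivity estimate then yields $\|\nabla v\|_{K,2}^2\le N A(v)<\frac{2N(p+1)}{p-1}\alpha$, which is a uniform bound in $X$.
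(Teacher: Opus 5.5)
Your argument is correct, but for part (1) it relies on a different key inequality than the paper's proof. The paper does not invoke Lemma \ref{l03}; it uses the depth of the well. On $\mathcal{N}$ it writes $d\le E_K(v)=\frac{p-1}{2(p+1)}A(v)$, so $A(v)\ge\frac{2(p+1)}{p-1}d>0$. For $\mathcal{N}_-$ it writes the same chain, but the step $d\le E_K(v)$ is not available there. $E_K$ is unbounded below on $\mathcal{N}_-$: for instance $E_K(\lambda v)\to-\infty$ as $\lambda\to\infty$, with $\lambda v\in\mathcal{N}_-$. One would first have to project onto $\mathcal{N}$ via $\lambda^* v$, where $\lambda^*=(A(v)/B(v))^{1/(p-1)}<1$, to get $d\le\frac{p-1}{2(p+1)}(\lambda^*)^2A(v)<\frac{p-1}{2(p+1)}A(v)$.

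Your route through the constant $S_K$ handles $\mathcal{N}$ and $\mathcal{N}_-$ at once and needs no information about $d$. The paper's route ties the bound to the energy level, which fits the potential-well framework. The two bounds in fact agree, since $\frac{2(p+1)}{p-1}d=Nd=S_K^{N/2}$.

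For part (2) you follow the paper, but you supply a step it omits. Bounding $A(v)$ gives boundedness in $X$ only if $A$ controls $\|\nabla v\|_{K,2}^2$ from below, and Lemma \ref{l022} controls only $\|v\|_{K,2}^2$. Your estimate $A(v)\ge\frac1N\|\nabla v\|_{K,2}^2$, from $\mu C_N\le\frac12$ and $\beta/\lambda_1=\frac{N-2}{2N}$, is correct and closes this gap. Likewise, your explicit use of $\|\nabla v\|_{K,2}^2\ge A(v)$ in part (1) is left implicit in the paper.

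One caveat on constants, inherited from the paper: the computation in Lemma \ref{l03} actually compares $A(v)$, not $\sqrt{A(v)}$, with $r(\delta)$. On $\mathcal{N}$, $A=B\le (A/S_K)^{N/(N-2)}$ gives $A(v)\ge S_K^{N/2}=r(1)$. So your distance bound should read $\operatorname{dist}(0,\mathcal{N}),\ \operatorname{dist}(0,\mathcal{N}_-)\ge\sqrt{r(1)}=S_K^{N/4}$ rather than $r(1)$. The positivity asserted in the lemma is unaffected.
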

\begin{proof}
 (1) If $v \in \mathcal{N}$, then we have
$$
\begin{aligned}
d & \leq E_K(v)=\frac{1}{2}A(v)-\frac{1}{p+1}B(v) \\
& =\left(\frac{1}{2}-\frac{1}{p+1}\right)A(v)+\frac{1}{p+1} D_K(v) \\
& =\left(\frac{1}{2}-\frac{1}{p+1}\right)A(v).
\end{aligned}
$$
If $v \in \mathcal{N}_{-}$, then we have
$$
\begin{aligned}
d & \leq E_K(v)=\frac{1}{2}A(v)-\frac{1}{p+1}B(v) \\
& =\left(\frac{1}{2}-\frac{1}{p+1}\right)A(v)+\frac{1}{p+1} D_K(v) \\
& \leq\left(\frac{1}{2}-\frac{1}{p+1}\right)A(v).
\end{aligned}
$$
Hence, 0 is away from both $\mathcal{N}$ and $\mathcal{N}_{-}$, i.e. $\operatorname{dist}(0, \mathcal{N})>0$, $\operatorname{dist}\left(0, N_{-}\right)>0$.

(2) Since $E_K(v)<\alpha$ and $D_K(v)>0$, we obtain

$$
\begin{aligned}
\alpha & >E_K(v)=\frac{1}{2}A(v)-\frac{1}{p+1}B(v) \\
& =\left(\frac{1}{2}-\frac{1}{p+1}\right)A(v)+\frac{1}{p+1} D_K(v) \\
& >\left(\frac{1}{2}-\frac{1}{p+1}\right)A(v).
\end{aligned}
$$
Hence, for any $\alpha>0$, the set $E^\alpha \cap \mathcal{N}_{+}$is bounded in $X$.
\end{proof}

\section{Low initial energy $E_K\left(v_0\right)<d$}

The goal of this section is to prove Theorems \ref{t31}-\ref{t34}. A threshold result for the global solutions and finite time blowup will be given.

Let $A=L-\beta -\frac{\mu }{|y|^2}$, $\mu\leq\frac{(N-2)^2}{8}$. Then by means of the
 Hille-Yosida-Phillips theorem, the linear operator generates an analytic semigroup of
bounded linear operators  $\left\{e^{-t A}\right\}_{t \geq 0}$.
\textbf{Then  this structure guarantees the well-posedness of problem \eqref{e02}
locally in time.}

\begin{theorem}\label{t31}
   Assume that $v_0 \in X, S$ is the maximal existence time of $v$, and $0<e<d, \delta_1<\delta_2$ are two roots of equation $d(\delta)=e$. We have
 \begin{description}
   \item[(1)]  If $D_K\left(v_0\right)>0$, all weak solutions $v$ of problem (\ref{e02}) with $E_K\left(v_0\right)=e$ belong to $W_\delta$ for $\delta_1<\delta<\delta_2, 0 \leqslant s<S$;

   \item[(2)] If $D_K\left(v_0\right)<0$, all weak solutions $v$ of problem (\ref{e02}) with $E_K\left(v_0\right)=e$ belong to $V_\delta$ for $\delta_1< \delta<\delta_2, 0 \leqslant s<S$.
 \end{description}
\end{theorem}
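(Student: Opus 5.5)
The plan is a Payne–Sattinger/Liu-type invariance argument. It needs two ingredients: the energy inequality along trajectories and the monotonicity of $d(\delta)$ from Lemma \ref{l04}.

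\textbf{Step 1: energy inequality.} First I would record the energy inequality for weak solutions:
\begin{equation*}
\int_0^s\|v_\tau\|_{K,2}^2\,d\tau+E_K(v(s))\leqslant E_K(v_0),\qquad 0\leqslant s<S .
\end{equation*}
This is obtained by testing the equation with $v_s$ (formally $w=v_s$, rigorously on Galerkin approximations in the eigenbasis of $L$, then passing to the limit by weak lower semicontinuity). In particular $E_K(v(s))\leqslant e$ for all $s$.

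I also need continuity of $s\mapsto D_{K,\delta}(v(s))$ and $s\mapsto E_K(v(s))$. This follows from $v\in C([0,S);X)$ for the mild/strong solution given by the analytic semigroup $e^{-sA}$, together with the continuity of $A(\cdot)$ and $B(\cdot)$ on $X$ (Lemma \ref{l00}, Lemma \ref{le0-1}). I expect this regularity/energy step to be the main technical obstacle, since the weak-solution class in Definition \ref{wd} does not immediately give continuity in $X$. I would first try to obtain it from the semigroup construction and then identify the weak solution with the mild one.

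\textbf{Step 2: the initial datum.} By Lemma \ref{l04}(3), $d(\delta)$ increases on $(0,1]$ and decreases on $[1,\frac{p+1}{2}]$, with $d(\delta)\to0$ at both ends. Hence for $0<e<d$ the equation $d(\delta)=e$ has exactly two roots $\delta_1<1<\delta_2$, and $d(\delta)>e$ for $\delta\in(\delta_1,\delta_2)$. So $E_K(v_0)=e<d(\delta)$ on this interval.

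Next I show that $D_{K,\delta}(v_0)$ has the same sign as $D_K(v_0)=D_{K,1}(v_0)$ for all $\delta\in(\delta_1,\delta_2)$. The map $\delta\mapsto D_{K,\delta}(v_0)=\delta A(v_0)-B(v_0)$ is continuous (affine). So if the sign changed, there would be some $\delta$ in the interval with $D_{K,\delta}(v_0)=0$.

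Such a $\delta$ is impossible. We have $A(v_0)\neq0$: otherwise $B(v_0)=0$, so $v_0=0$ and $E_K(v_0)=0\neq e$. Hence $v_0\in\mathcal N_\delta$, giving $e=E_K(v_0)\geqslant d(\delta)>e$, a contradiction. Thus $v_0\in W_\delta$ in case (1) and $v_0\in V_\delta$ in case (2).

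\textbf{Step 3: invariance in time, case (1).} Fix $\delta\in(\delta_1,\delta_2)$ and suppose there is a first time $s_0\in(0,S)$ with $v(s_0)\in\partial W_\delta$, i.e. $v(s_0)\notin W_\delta$ in the limit. By continuity, either $E_K(v(s_0))=d(\delta)$, or $D_{K,\delta}(v(s_0))=0$ with $v(s_0)\neq0$.

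The first alternative contradicts $E_K(v(s_0))\leqslant e<d(\delta)$. In the second, $A(v(s_0))\neq0$ (as in Step 2), so $v(s_0)\in\mathcal N_\delta$ and $E_K(v(s_0))\geqslant d(\delta)>e$, again a contradiction.

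\textbf{Step 4: invariance in time, case (2).} The same argument applies, with one extra point: one must exclude the trajectory passing through $0$. This is handled by Lemma \ref{l03}(2): $D_{K,\delta}(v(s))<0$ forces $\sqrt{A(v(s))}>r(\delta)>0$ on $[0,s_0)$. By continuity $A(v(s_0))\geqslant r^2(\delta)>0$, so $D_{K,\delta}(v(s_0))=0$ would place $v(s_0)\in\mathcal N_\delta$, yielding the same contradiction. Hence $v(s)\in V_\delta$ for all $0\leqslant s<S$.
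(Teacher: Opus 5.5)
Your proposal is correct and follows essentially the paper's route: the sign-invariance argument of Lemma \ref{l32} places $v_0$ in $W_\delta$ (resp.\ $V_\delta$), then a first-exit-time contradiction uses the energy identity of Lemma \ref{l31} and the definition of $d(\delta)$, with Lemma \ref{l03}(2) keeping the trajectory away from $0$ in case (2). The paper simply assumes the regularity points you flag in Step 1 (the energy identity for an arbitrary weak solution, and continuity of $s\mapsto v(s)$ in $X$), and your explicit handling of $A(v)\neq 0$ and of passage through $0$ in case (1) is, if anything, more careful than the paper's.
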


\begin{theorem}\label{t32}
 (Global existence). Assume that $v_0 \in X, E_K\left(v_0\right)<d, D_K\left(v_0\right)>0$. Then problem (\ref{e02})  has a global solution $v(s) \in L^{\infty}\left(0, \infty ; X\right)$ and $v(s) \in W$ for $0 \leqslant s<\infty$.
\end{theorem}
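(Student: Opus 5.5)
The plan is the standard Faedo--Galerkin argument combined with the invariance of the potential well $W$. Since $L$ is self-adjoint with compact inverse, its normalized eigenfunctions $\{w_j\}$ form an orthonormal basis of $L_K^2(\mathbb{R}^N)$ and are dense in $X$. We look for approximate solutions $v_m(s)=\sum_{j=1}^m g_{jm}(s)w_j$ solving
\[
(v_{ms},w_j)_K+(\nabla v_m,\nabla w_j)_K=\Big(|v_m|^{p-1}v_m+\mu\frac{v_m}{|y|^2}+\beta v_m,\,w_j\Big)_K,\qquad j=1,\dots,m,
\]
with $v_m(0)=v_{0m}\to v_0$ in $X$. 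Peano's theorem gives local solvability of this ODE system.

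Testing with $g_{jm}'$ and summing yields the energy identity
\[
\int_0^s\|v_{m\tau}\|_{K,2}^2\,d\tau+E_K(v_m(s))=E_K(v_{0m}).
\]
Because $E_K$ and $D_K$ are continuous on $X$, we have $E_K(v_{0m})<d$ and $D_K(v_{0m})>0$ for $m$ large. So it suffices to show $v_m(s)\in W$ for all $s$.

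\textbf{Invariance of $W$.} Suppose the contrary, and let $s_0$ be the first time with $v_m(s_0)\in\partial W$. Then either $E_K(v_m(s_0))=d$ or $D_K(v_m(s_0))=0$ with $v_m(s_0)\neq0$.

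The first case contradicts the energy identity, since the energy is nonincreasing and starts below $d$. For the second case, note that $v_m(s)\in W$ on $[0,s_0)$. By Lemma~\ref{l06}(1) this gives $A(v_m)<\frac{2(p+1)}{p-1}d$, and combined with Lemma~\ref{l022} it bounds $v_m$ away from blowing up. A nonzero point with $D_K=0$ lies in $\mathcal N$, so $E_K(v_m(s_0))\ge d$ by the definition of $d$, again contradicting the energy inequality.

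One subtlety is that $v_m(s_0)\neq0$ must be justified. Lemma~\ref{l07}(1) (or Lemma~\ref{l03}(3)) shows that $\mathcal N$ is at positive distance from $0$. Hence, if $D_K$ reaches $0$, either $v_m(s_0)\in\mathcal N$ or $v_m(s_0)=0$. Since $0\in W$, the latter is harmless.

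\textbf{Uniform bounds.} In $W$ we have $E_K\ge \frac{p-1}{2(p+1)}A(v)$ and $E_K(v_m(s))<d$. This gives $A(v_m(s))\le C d$ and $\int_0^\infty\|v_{m\tau}\|_{K,2}^2\,d\tau<d$. To get an $X$-bound from $A$, we use the coercivity in the proof of Lemma~\ref{l022}. With $\mu\le (N-2)^2/8$, the Hardy inequality of Lemma~\ref{l00} gives $\mu\int\frac{|v|^2}{|y|^2}K\le\frac12\|\nabla v\|_{K,2}^2$. Combining this with Lemma~\ref{l01}, we expect $A(v)\ge c\|\nabla v\|_{K,2}^2$ for some $c>0$. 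I expect this coercivity to be the main obstacle, since at $\mu=(N-2)^2/8$ the constants are borderline; if needed I would split $|\nabla v|^2$ with a weight $\theta$ slightly different from $1/2$, using $\lambda_*\ge N/4>\beta$.

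Then Lemma~\ref{le0-1} gives the following uniform bounds: $\|v_m\|_X$ is bounded, $|v_m|^{p-1}v_m$ is bounded in $L^\infty(0,\infty;L_K^{(p+1)/p})$, $v_m/|y|$ is bounded in $L^\infty(L^2_K)$, and $v_{ms}$ is bounded in $L^2(0,\infty;L^2_K)$. In particular the solutions $v_m$ are global.

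\textbf{Passage to the limit.} We extract weak-$*$ limits $v$ in $L^\infty(0,\infty;X)$ and weak limits of $v_{ms}$ in $L^2(L^2_K)$. By an Aubin--Lions argument, using the compact embedding $X\hookrightarrow L^2_K$ on bounded time intervals, we get $v_m\to v$ a.e. The critical term $|v_m|^{p-1}v_m$ then converges weakly to $|v|^{p-1}v$ in $L^{(p+1)/p}_K$ by a.e. convergence plus boundedness, which avoids needing compactness at $2^*$. The linear Hardy term passes to the limit weakly. Thus $v$ is a global weak solution in the sense of Definition~\ref{wd}.

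Finally, weak lower semicontinuity of the norms gives the limiting energy inequality. The same first-exit argument, applied to $v$ itself, yields $v(s)\in W$ for all $s\ge0$.
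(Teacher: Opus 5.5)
\textbf{A genuine gap in the last step.} Your Galerkin route differs from the paper's. The paper takes a local solution from the analytic semigroup and runs the first-exit argument directly on $v$, for the whole family $W_\delta$, $\delta_1<\delta<\delta_2$. Your route has a real advantage: globality of the approximations follows from the a~priori bound alone. At the critical exponent, a bound in $X$ does not by itself give continuation of the semigroup solution, and the paper never addresses this. Everything up to the passage to the limit is fine. The gap is in your last sentence, which fails on two counts.
\begin{itemize}
\item Weak lower semicontinuity does not yield $\int_0^s\|v_\tau\|_{K,2}^2\,d\tau+E_K(v(s))\le E_K(v_0)$. In $E_K=\frac12A-\frac1{p+1}B$ the critical term enters with a minus sign. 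Since $X\hookrightarrow L_K^{2^*}$ is not compact, $v_m(s)\rightharpoonup v(s)$ only gives $B(v(s))\le\liminf B(v_m(s))$, which is the wrong direction.
\item The first-exit argument cannot be re-run on $v$. The limit is only weakly continuous into $X$, so $s\mapsto D_K(v(s))$ and $s\mapsto E_K(v(s))$ need not be continuous, and there is no first hitting time of $\partial W$.
\end{itemize}

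\textbf{How to repair it.} Transfer the well information to the limit for each fixed $s$ instead of re-proving invariance. Since $\frac{2(p+1)}{p-1}=N$, you have an elementary fact: if $w\neq0$ and $A(w)<Nd$, then $D_K(w)>0$, hence $E_K(w)>\frac1NA(w)\ge0$. Indeed, otherwise $\lambda^*=(A(w)/B(w))^{1/(p-1)}\le1$ puts $\lambda^*w\in\mathcal N$ with $d\le E_K(\lambda^*w)=\frac1N(\lambda^*)^2A(w)<d$.
\begin{itemize}
\item \emph{Sign of $D_K$.} In $W$ you have $A(v_m(s))\le NE_K(v_{0m})$. By Aubin--Lions, $v_m\to v$ in $C([0,T];L_K^2)$, so $v_m(s)\rightharpoonup v(s)$ in $X$ for every $s$. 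Since $A$ is a nonnegative quadratic form, it is weakly l.s.c., so $A(v(s))\le NE_K(v_0)<Nd$. Hence $D_K(v(s))>0$ unless $v(s)=0$.
\item \emph{Energy.} Put $w_m=v_m(s)-v(s)$. A.e.\ convergence along a subsequence and the Brezis--Lieb lemma give $E_K(v_m(s))=E_K(v(s))+E_K(w_m)+o(1)$. Moreover $\limsup_m A(w_m)=\limsup_m A(v_m(s))-A(v(s))<Nd$, so $E_K(w_m)\ge0$ for large $m$. Combined with weak l.s.c.\ of $\int_0^s\|v_{m\tau}\|_{K,2}^2\,d\tau$, this gives the energy inequality for every $s$. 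In particular $E_K(v(s))\le E_K(v_0)<d$, so $v(s)\in W$ without any continuity in time.
\end{itemize}

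\textbf{Coercivity.} The coercivity you flag as the main obstacle is immediate from \eqref{po}: $A(v)\ge(1-\frac12-\frac{\beta}{\lambda_1})\|\nabla v\|_{K,2}^2=\frac1N\|\nabla v\|_{K,2}^2$. Your proposed fallback via $\lambda_*$ would actually fail for $N\ge4$ at $\mu=(N-2)^2/8$. There the Hardy term already absorbs half of $\|\nabla v\|_{K,2}^2$, while $\beta/\lambda_*=(N-2)/N\ge\frac12$, and the weight $\theta$ gives no room to shift.
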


\begin{theorem}\label{t33}
Assume that $v_0 \in X, E_K\left(v_0\right)<d$ and $D_K\left(v_0\right)<0$. Then the weak solution $v(s)$ of problem (\ref{e02})  blows up in finite time, that is, there exists a $S>0$ such that
$$
\lim _{s \rightarrow S} \int_0^s\|v(\tau)\|_{K,2} d \tau=+\infty.
$$
\end{theorem}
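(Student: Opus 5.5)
We would use the classical concavity argument of Levine, adapted to the weighted space $L^2_K(\mathbb{R}^N)$. The two inputs are the energy identity and the invariance of the unstable set from Theorem \ref{t31}(2).

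First, testing (\ref{e02}) with $v_s$ (formally, then justified through the weak formulation of Definition \ref{wd}) gives the energy identity
$$\int_0^s\|v_\tau\|_{K,2}^2\,d\tau+E_K(v(s))=E_K(v_0),\qquad 0\le s<S .$$
In particular $E_K(v(s))\le E_K(v_0)<d$. Testing with $v$ gives
$$\frac12\frac{d}{ds}\|v\|_{K,2}^2=-D_K(v).$$
By Theorem \ref{t31}(2) with $\delta=1$ (or directly by a continuity argument, using Lemma \ref{l03}(3) and $d=\inf_{\mathcal N}E_K$), we have $D_K(v(s))<0$ for all $s\in[0,S)$. Lemma \ref{l03}(2) then gives $A(v(s))>r^2(1)$.

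Next, argue by contradiction: suppose $S=\infty$. Define
$$M(s)=\int_0^s\|v(\tau)\|_{K,2}^2\,d\tau .$$
Then $M'(s)=\|v(s)\|_{K,2}^2$ and $M''(s)=-2D_K(v(s))$. Write
$$-D_K(v)=(p+1)\bigl(E_K(v_0)-E_K(v)\bigr)\cdot 0+\Bigl[\tfrac{p-1}{2}A(v)-(p+1)E_K(v)\Bigr],$$
and use the energy identity to obtain
$$M''(s)\ge (p-1)A(v)-2(p+1)E_K(v_0)+2(p+1)\int_0^s\|v_\tau\|_{K,2}^2\,d\tau .$$
The key quantitative step is to show $(p-1)A(v)-2(p+1)E_K(v_0)\ge \sigma>0$ uniformly in $s$. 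For this, note that $D_K(v)<0$ implies, by scaling to the Nehari manifold with $s^*<1$, that $d\le E_K(s^*v)=\frac{p-1}{2(p+1)}(s^*)^2A(v)<\frac{p-1}{2(p+1)}A(v)$. Hence
$$(p-1)A(v)-2(p+1)E_K(v_0)>2(p+1)\bigl(d-E_K(v_0)\bigr)=:\sigma>0 .$$

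With this bound, $M''(s)\ge\sigma+2(p+1)\int_0^s\|v_\tau\|_{K,2}^2\,d\tau$. We then form $M M''-\frac{p+3}{4}(M')^2$, after replacing $M$ by $M(s)+(S_0-s)\|v_0\|_{K,2}^2$ if needed. Using $M'(s)=\|v_0\|_{K,2}^2+2\int_0^s(v,v_\tau)_K\,d\tau$ and Cauchy--Schwarz, we get
$$MM''-\tfrac{p+1}{2}(M')^2\ge 0$$
for large $s$, because $M'\to\infty$ since $M''\ge\sigma$. Thus $M^{-(p-1)/2}$ is concave and decreasing to zero in finite time. This contradicts $S=\infty$. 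Since $M(s)$ is finite as long as the solution exists, it follows that $\int_0^s\|v\|_{K,2}^2\to\infty$ as $s\to S$; the stated form of the blow-up follows in the same way, or via Hölder's inequality if needed.

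The main obstacle is the concavity inequality, in particular absorbing the term $\|v_0\|_{K,2}^2$ and the cross term $\bigl(\int_0^s(v,v_\tau)_K\bigr)^2\le M\int_0^s\|v_\tau\|^2_{K,2}$. This requires choosing the auxiliary shift and starting time carefully, so that the contribution of $\sigma$ dominates the constants.
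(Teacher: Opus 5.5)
Your proposal is correct. It follows the paper's overall strategy: assume $S=\infty$ and run Levine's concavity argument on $\int_0^s\|v\|_{K,2}^2\,d\tau$. The mechanism that makes the concavity inequality work is different, though.

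\textbf{The paper's route.} It uses Lemma \ref{l022} to produce a term $\frac{p-1}{4}ff'$, which for large $s$ absorbs both $(p+1)\|v_0\|_{K,2}^2f'$ and $2(p+1)E_K(v_0)f$. When $E_K(v_0)>0$ this also requires $f'\to\infty$. The paper obtains that by treating the case $0<E_K(v_0)<d$ separately and invoking invariance of the whole family $V_\delta$ (Theorem \ref{t31}), which gives $f''\ge 2(\delta_2-1)r^2(\delta_2)$.

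\textbf{Your route.} You use the Nehari scaling $s^*=(A(v)/B(v))^{1/(p-1)}<1$ together with $d=\inf_{\mathcal N}E_K$ to get $(p-1)A(v)>2(p+1)d$ on $\{D_K<0\}$. This dominates the energy term directly: $M''\ge\sigma+2(p+1)\int_0^s\|v_\tau\|_{K,2}^2\,d\tau$ with $\sigma=2(p+1)(d-E_K(v_0))>0$, for every $E_K(v_0)<d$ at once. The $\|v_0\|_{K,2}^2$ contributions are then removed by the shift $(S_0-s)\|v_0\|_{K,2}^2$, which gives $MM''-\frac{p+1}{2}(M')^2\ge\sigma M$ on $[0,S_0]$.

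\textbf{What each buys.} Your version needs only invariance of $\{D_K<0\}$, via the direct continuity argument. That argument also covers $E_K(v_0)\le 0$, where Theorem \ref{t31} is not available. So you need no case split, no $\delta$-family and no Lemma \ref{l022}. The paper's version stays inside its modified-well framework and quantifies growth through $r(\delta_2)$, at the cost of the case split.

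\textbf{Two small corrections.} First, with the shift, ``large $s$'' is needed only to pick a starting time $s_1$ with $\|v(s_1)\|_{K,2}^2>\frac{p+1}{p-1}\|v_0\|_{K,2}^2$, so that $M^{-(p-1)/2}$ vanishes before $S_0$. Without the shift, $M'\to\infty$ alone would not control $(p+1)\|v_0\|_{K,2}^2M'$.

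Second, H\"{o}lder's inequality runs the wrong way in your last sentence: $\int_0^s\|v\|_{K,2}\,d\tau\le s^{1/2}\bigl(\int_0^s\|v\|_{K,2}^2\,d\tau\bigr)^{1/2}$. What you and the paper actually obtain is $S<\infty$ with $\int_0^s\|v\|_{K,2}^2\,d\tau\to\infty$. The unsquared norm in the statement looks like a typo.
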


\begin{theorem}\label{t34}
 Assume that $v_0 \in X, E_K\left(v_0\right)<d$ and $D_K\left(v_0\right)>0, \delta_1<\delta_2$ are the two roots of equation $d(\delta)=E_K\left(v_0\right)$. Then, for the global weak solution $v$ of problem (\ref{e02}), it holds
\begin{equation}
\|v\|_{K,2}^2 \leqslant\left\|v_0\right\|_{K,2}^2 e^{-\frac{1}{2}\left(1-\delta_1\right) s}, \quad 0 \leqslant s<\infty .
\end{equation}
\end{theorem}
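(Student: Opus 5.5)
We prove Theorem \ref{t34} by testing the equation with $v$ itself and then invoking the invariance of the modified wells from Theorem \ref{t31}(1).

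\textbf{Energy identity.} Formally take $w=v$ in Definition \ref{wd}, using $Lv=-\frac1K\nabla\cdot(K\nabla v)$ and the self-adjointness of $L$ in $L^2_K$. This gives, for a.e. $s$,
\begin{equation*}
\frac12\frac{d}{ds}\|v(s)\|_{K,2}^2=-A(v(s))+B(v(s))=-D_K(v(s)).
\end{equation*}
To make this rigorous we either use Galerkin approximations with eigenfunctions of $L$ or use the regularity $v\in L^\infty(0,\infty;X)$, $v_s\in L^2(0,S;L^2_K)$, and then pass to the limit.

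\textbf{Choice of $\delta$.} Since $E_K(v_0)=e<d$ and $D_K(v_0)>0$, Theorem \ref{t32} gives a global solution. Theorem \ref{t31}(1) gives $v(s)\in W_\delta$ for every $\delta\in(\delta_1,\delta_2)$ and all $s\ge0$. As $\delta_1<1<\delta_2$ (because $d(\delta)$ attains its maximum $d>e$ at $\delta=1$, by Lemma \ref{l04}(3)), we may take $\delta\in(\delta_1,1)$. Then $D_{K,\delta}(v(s))\ge 0$, i.e. $B(v)\le\delta A(v)$, and letting $\delta\downarrow\delta_1$ yields $B(v)\le\delta_1 A(v)$. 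If $v(s)=0$ at some time, the solution stays zero and the estimate is trivial. Consequently
\begin{equation*}
D_K(v)=A(v)-B(v)\ge (1-\delta_1)A(v)\ge \frac{1-\delta_1}{4}\|v\|_{K,2}^2,
\end{equation*}
where the last inequality is Lemma \ref{l022}.

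\textbf{Gronwall.} Combining the two steps,
\begin{equation*}
\frac{d}{ds}\|v\|_{K,2}^2\le -\frac{1-\delta_1}{2}\|v\|_{K,2}^2,
\end{equation*}
and integrating gives $\|v(s)\|_{K,2}^2\le\|v_0\|_{K,2}^2e^{-\frac12(1-\delta_1)s}$, which is exactly the claimed rate.

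\textbf{Main obstacle.} The delicate point is justifying the energy identity for weak solutions. The singular terms $\mu|v|^2/|y|^2$ and the critical term $|v|^{2^*}$ must be integrable against $K$, which follows from Lemma \ref{l00} and Lemma \ref{le0-1}. The cleanest route is to derive the inequality at the Galerkin level, where the invariance of $W_\delta$ also holds, and pass to the limit using weak lower semicontinuity. The limiting argument $\delta\downarrow\delta_1$ is harmless because the inequality $B\le\delta A$ holds for every admissible $\delta$.
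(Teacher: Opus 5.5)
Your proposal is correct and follows the paper's proof essentially step for step. Testing with $v$ gives $\frac12\frac{d}{ds}\|v\|_{K,2}^2=-D_K(v)$, and the invariance of $W_\delta$ from Theorem~\ref{t31}(1) yields $D_{K,\delta_1}(v)\ge 0$; hence $D_K(v)\ge(1-\delta_1)A(v)\ge\frac{1-\delta_1}{4}\|v\|_{K,2}^2$ by Lemma~\ref{l022}, and Gronwall gives the stated rate. Your extra remarks, on why $\delta_1<1$ and on justifying the energy identity for weak solutions via Galerkin approximation, only make explicit what the paper leaves implicit.
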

In order to prove Theorems \ref{t31}-\ref{t34}, we need the following lemmas:
\begin{lemma}\label{l31}
 For $0<S \leq \infty$, assume that $v: \mathbb{R}^N \times[0, S) \rightarrow \mathbb{R}^N$ is a weak solution to problem (\ref{e02}). Then it holds
\begin{equation}\label{e08}
\int_{s_1}^{s_2}\left\|v_{\tau}\right\|_{ K,2}^2 d\tau+E_K\left(v\left(s_2\right)\right)=E_K\left(v\left(s_1\right)\right), \quad \forall s_1, s_2 \in(0, S).
\end{equation}
\end{lemma}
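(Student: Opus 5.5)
To prove Lemma \ref{l31}, I will test the equation in \eqref{e02} against the time derivative $v_s$ in the weighted space $L_K^2(\mathbb{R}^N)$ and integrate in time from $s_1$ to $s_2$. The starting point is the divergence form of the operator, $Lv=-\frac{1}{K}\nabla\cdot(K\nabla v)$, so that \eqref{e02} reads
$$
K v_s-\nabla\cdot(K\nabla v)=K\Big(\beta v+\frac{\mu v}{|y|^2}+|v|^{p-1}v\Big).
$$
Formally, multiplying by $v_s$ and integrating over $\mathbb{R}^N$ (the boundary terms vanish for functions in $X$ by density of $C_c^\infty(\mathbb{R}^N)$) gives
$$
\|v_s\|_{K,2}^2+\frac{d}{ds}\Big[\frac12\|\nabla v\|_{K,2}^2-\frac{\mu}{2}\|v\|_{K,2,s}^2-\frac{\beta}{2}\|v\|_{K,2}^2-\frac{1}{p+1}\|v\|_{K,p+1}^{p+1}\Big]=0,
$$
that is, $\|v_s\|_{K,2}^2+\frac{d}{ds}E_K(v(s))=0$. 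Integrating over $[s_1,s_2]$ yields \eqref{e08}. Every term in the bracket is finite for $v\in X$:
\begin{itemize}
\item the gradient term by the definition of $X$;
\item the Hardy term by Lemma \ref{l00};
\item the $L_K^2$ term by \eqref{po};
\item the critical term by Lemma \ref{le0-1} with $q=2^*=p+1$.
\end{itemize}

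To make this rigorous at the regularity of Definition \ref{wd} ($v\in L^\infty(0,S;X)$, $v_s\in L^2(0,S;L^2_K)$), I will use a Galerkin approximation built on the eigenbasis $\{w_j\}$ of $L$, which is a complete orthonormal system in $L_K^2$ and also orthogonal in $X$. For the approximations $v_m=\sum_{j\le m}g_{jm}(s)w_j$, the ODE system allows testing with $v_{m,s}$ exactly, which gives the identity for $v_m$. I then pass to the limit. The time-derivative term and $E_K(v_m(s_1))$ are controlled by lower semicontinuity and strong convergence of the data; with $v_m(0)\to v_0$ in $X$, this first gives the inequality $\int_{s_1}^{s_2}\|v_\tau\|^2_{K,2}\,d\tau+E_K(v(s_2))\le E_K(v(s_1))$. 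An alternative route is Steklov averaging in time: test the weak formulation with $w=(v(\cdot+h)-v)/h$, which is admissible because it lies in $X$ for a.e.\ $s$, and let $h\to0$.

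The main obstacle is the critical nonlinearity together with the Hardy term. Neither $\|v\|_{K,p+1}^{p+1}$ nor the Hardy quadratic form is weakly continuous, so the limit passage does not directly give equality. To get equality, I will use the chain rule for the functional $E_K$ along the curve $s\mapsto v(s)$. The key point is that $E_K\in C^1(X)$, with derivative
$$
E_K'(v)=Lv-\beta v-\mu |y|^{-2}v-|v|^{p-1}v .
$$
By the weak formulation, $E_K'(v)=-v_s$ in $X^*$ for a.e.\ $s$, and $v_s\in L^2(s_1,s_2;L_K^2)$. The chain rule $\frac{d}{ds}E_K(v(s))=\langle E_K'(v),v_s\rangle=-\|v_s\|_{K,2}^2$ therefore holds in the sense of absolutely continuous functions, via the standard lemma for gradient-type flows: regularize $v$ in time by mollification, apply the classical chain rule, and pass to the limit using the continuity of $E_K'$ from $X$ to $X^*$ provided by Lemma \ref{le0-1} and Lemma \ref{l00}. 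Integrating this chain rule over $[s_1,s_2]$ gives \eqref{e08} with equality.
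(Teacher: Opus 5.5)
Your formal computation (test with $v_s$, integrate by parts using $Lv=-K^{-1}\nabla\cdot(K\nabla v)$, integrate over $[s_1,s_2]$) is exactly the paper's proof, which consists of that one formal line. The gap is in the step you add to make it rigorous. The chain rule $\frac{d}{ds}E_K(v(s))=\langle E_K'(v),v_s\rangle$ does not follow from $E_K\in C^1(X)$, $v\in L^\infty(0,S;X)$ and $v_s\in L^2(0,S;L^2_K)$. Take $v^\varepsilon=\rho_\varepsilon*v$, a mollification in time. Then $\frac{d}{ds}E_K(v^\varepsilon)=\langle E_K'(v^\varepsilon),\partial_sv^\varepsilon\rangle_{X^*,X}$. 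But $\partial_sv^\varepsilon$ converges only in $L^2(L^2_K)$ and is merely bounded by $C\varepsilon^{-1}$ in $X$. So convergence of $E_K'(v^\varepsilon)$ in $X^*$ does not let you pass to the limit; you would need convergence in $L^2_K$. The quadratic part of $E_K'$ is linear and commutes with the mollifier, so after using the weak formulation what remains is the commutator
\[
\int_{s_1}^{s_2}\Big(\rho_\varepsilon*(|v|^{p-1}v)-|v^\varepsilon|^{p-1}v^\varepsilon,\ \partial_s v^\varepsilon\Big)_{L^2_K}\,ds .
\]
You cannot control this term. For $v\in X$ one only has $|v|^{p-1}v\in L_K^{2N/(N+2)}$, whose dual is $L_K^{2^*}$, not $L^2_K$; even $\int_{\mathbb{R}^N}|v|^{p-1}v\,v_s\,K\,dy$ is not defined a priori. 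The standard lemma you have in mind (Brezis' chain rule) needs a convex functional whose subdifferential along the curve lies in $L^2(0,S;L^2_K)$. For the convex part $\frac12A$ that means $|v|^{p-1}v\in L^2(L^2_K)$, and $E_K$ itself is not ($\lambda$-)convex because of $-\frac1{p+1}B$. Steklov averaging fails at the same term: convexity of $|v|^{p+1}$ yields an error of the opposite sign to the quadratic remainder $-\frac1{2h}\int A(v(s+h)-v(s))\,ds$, so neither inequality closes. The Galerkin route gives only the inequality, and only for the solution it constructs, not for an arbitrary weak solution.

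This is not merely technical: at the critical exponent the identity should not be expected for every weak solution in the class of Definition~\ref{wd}. Bubbling, for instance a type II blow-up continued past the blow-up time by restarting the flow from the weak limit, produces functions with $v\in L^\infty(X)$ and $v_s\in L^2(L^2_K)$ that satisfy the weak formulation. Their energy drops by the energy of a bubble at a single instant, so \eqref{e08} fails across that instant. The argument does go through under extra regularity. If $v_s\in L^2_{\mathrm{loc}}(0,S;X)$, then $\partial_sv^\varepsilon\to v_s$ in $L^2_{\mathrm{loc}}(X)$ and continuity of $E_K'\colon X\to X^*$ suffices. If instead $|v|^{p-1}v\in L^2_{\mathrm{loc}}(0,S;L^2_K)$, the commutator tends to $0$: the bound $|\rho_\varepsilon*v|^p\le\rho_\varepsilon*|v|^p$, together with a.e.\ convergence, gives $|v^\varepsilon|^{p-1}v^\varepsilon\to|v|^{p-1}v$ in $L^2_{\mathrm{loc}}(L^2_K)$. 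This is the regularity one expects for the semigroup solutions at positive times, but it has to be proved or built into the notion of solution. The paper's one-line proof tacitly assumes it as well.
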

\begin{proof}
 Multiplying (\ref{e02}) by $v_s$ and integrating over $\mathbb{R}^N$ via the integration by parts, we get (\ref{e08}).
\end{proof}

\begin{lemma}\label{l32}
 If $0<E_K(v)<d$ for some $v \in X$, and $\delta_1<1<\delta_2$ are the two roots of equation $d(\delta)=E_K(v)$, then the sign of $D_{K, \delta}(v)$ does not change for $\delta_1<\delta<\delta_2$.
\end{lemma}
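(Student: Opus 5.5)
Fix $v\in X$ with $0<E_K(v)<d$ and suppose, for contradiction, that there are $\delta',\delta''\in(\delta_1,\delta_2)$ with $D_{K,\delta'}(v)$ and $D_{K,\delta''}(v)$ of different sign (or one of them zero). The argument uses only the continuity of $\delta\mapsto D_{K,\delta}(v)=\delta A(v)-B(v)$, the definition of $d(\delta)$ as an infimum over $\mathcal{N}_\delta$, and the monotonicity of $d$ from Lemma \ref{l04}(3).

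\textbf{Step 1: $v\neq 0$.} Since $E_K(v)>0$ and $E_K(0)=0$, we have $v\neq0$. Then Lemma \ref{l00}(2) together with $\mu\le (N-2)^2/8$ and \eqref{po} gives $\|\nabla v\|_{K,2}\neq 0$. We also record $A(v)>0$ (Lemma \ref{l022}).

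\textbf{Step 2: the map $\delta\mapsto D_{K,\delta}(v)$.} This is an affine function of $\delta$ with positive slope $A(v)$. Hence if it takes both signs, or vanishes, somewhere in the open interval $(\delta_1,\delta_2)$, then by the intermediate value theorem there is a $\delta^*\in(\delta_1,\delta_2)$ with $D_{K,\delta^*}(v)=0$. (If the sign changes strictly between $\delta'$ and $\delta''$, the zero lies strictly between them and hence inside the interval.)

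\textbf{Step 3: contradiction with the well depth.} From Step 2, $v\in\mathcal{N}_{\delta^*}$, so by definition $d(\delta^*)\le E_K(v)$. Now use Lemma \ref{l04}(3): $d$ is increasing on $(0,1]$ and decreasing on $[1,\frac{p+1}{2}]$. Since $d(\delta_1)=d(\delta_2)=E_K(v)<d(1)$, we have $\delta_1<1<\delta_2$, and for every $\delta\in(\delta_1,\delta_2)$ we get $d(\delta)>E_K(v)$. In particular $d(\delta^*)>E_K(v)$, which contradicts $d(\delta^*)\le E_K(v)$.

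The step that needs the most care is the strict inequality $d(\delta)>E_K(v)$ inside $(\delta_1,\delta_2)$. This requires \emph{strict} monotonicity of $d$, not merely monotonicity. The proof of Lemma \ref{l04}(3) supplies this, since it produces a strictly positive gap $\varepsilon(\delta',\delta'')$. I would cite that gap explicitly, together with the fact that $\delta_2<\frac{p+1}{2}$ (because $d(\frac{p+1}{2})=0<E_K(v)$), so that $\delta^*$ lies in the range where Lemma \ref{l04}(3) applies.

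The conclusion is that $D_{K,\delta}(v)$ never vanishes on $(\delta_1,\delta_2)$, so its sign is constant there.
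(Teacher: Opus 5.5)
Your proof is correct and follows the paper's route: a sign change forces a zero $\bar\delta\in(\delta_1,\delta_2)$ of $D_{K,\delta}(v)$, so $v\in\mathcal{N}_{\bar\delta}$ and $E_K(v)\ge d(\bar\delta)$, contradicting $d(\bar\delta)>d(\delta_1)=d(\delta_2)=E_K(v)$ from Lemma \ref{l04}(3). Your additions make explicit what the paper leaves implicit: the affine dependence on $\delta$ together with the intermediate value theorem, the need for \emph{strict} monotonicity (supplied by the gap $\varepsilon(\delta',\delta'')$), and $\delta_2<\frac{p+1}{2}$. The appeal to Lemma \ref{l00} in Step 1 is superfluous, since $\|\nabla\cdot\|_{K,2}$ is the norm of $X$.
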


\begin{proof}
Since $E_K(v)>0$, we have $\|\nabla v\|_{K,2} \neq 0$. If the sign of $D_{K, \delta}(v)$ is changeable for $\delta_1<\delta<\delta_2$, then we choose $\bar{\delta} \in\left(\delta_1, \delta_2\right)$ such that $D_{K, \bar{\delta}}(v)=0$. Hence, by the definition of $d(\bar{\delta})$, we can obtain $E_K(v) \geqslant d(\bar{\delta})$, which contradicts $E_K(v)=d\left(\delta_1\right)=d\left(\delta_2\right)<d(\bar{\delta})$ (by Lemma \ref{l04} (3)).
\end{proof}
\begin{definition}\label{df1}
   (Maximal existence time). Assume that $v(s)$ is a weak solution of problem (\ref{e02}). The maximal existence time $S$ of $v(s)$ is defined as follows:
   \begin{description}
     \item[(1)] If $v(s)$ exists for $0 \leqslant s<\infty$, then $S=+\infty$.
     \item[(2)]  If there is a $s_0 \in(0, \infty)$ such that $v(s)$ exists for $0 \leqslant s<s_0$, but doesn't exist at $s=s_0$, then $S=s_0$.

   \end{description}
\end{definition}
\begin{proof}[Proof of Theorem \ref{t31}]
  (1) Let $v(s)$ be any weak solution of problem (\ref{e02}) with $E_K\left(v_0\right)=e$, $D_K\left(v_0\right)>0$, and $S$ be the maximal existence time of $v(s)$. Using $E_K\left(v_0\right)=e, D_K\left(v_0\right)>0$ and Lemma \ref{l32},
  we have $D_{K, \delta}\left(v_0\right)>0$ and $E_K\left(v_0\right)<d(\delta)$. So $v_0(x) \in W_\delta$ for $\delta_1<\delta<\delta_2$. We need to prove that $v(s) \in W_\delta$ for $\delta_1<\delta<\delta_2$ and $0<s<S$. Indeed, if this is not the conclusion, from time continuity of $D_K(v)$ we assume that there must exist a $\delta_0 \in\left(\delta_1, \delta_2\right)$ and $s_0 \in(0, S)$ such that $v\left(s_0\right) \in \partial W_{\delta_0}$,
  and $D_{\delta_0}\left(v\left(s_0\right)\right)=0,\left\|\nabla v\left(s_0\right)\right\|_{K,2} \neq 0$ or $E_K\left(v\left(s_0\right)\right)=d\left(\delta_0\right)$. From the energy equality
\begin{equation}\label{e09}
\int_0^s \left\|v_\tau\right\|_{ K,2}^2 d\tau+E_K(v(s))=E_K\left(v_0\right)<d(\delta), \delta_1<\delta<\delta_2, \quad 0 \leqslant s<S,
\end{equation}
we easily know that $E_K\left(v\left(s_0\right)\right) \neq d\left(\delta_0\right)$. If $D_{K, \delta_0}\left(v\left(s_0\right)\right)=0,
\left\|\nabla v\left(s_0\right)\right\|_{K,2} \neq 0$, then by the definition of $d(\delta)$ we obtain $E_K\left(v\left(s_0\right)\right) \geqslant d\left(\delta_0\right)$, which contradicts (\ref{e09}).

(2) Let $v(s)$ be any weak solution of problem (\ref{e02}) with $E_K\left(v_0\right)=e, D_K\left(v_0\right)<0$,
and $S$ be the maximal existence time of $v(s)$. Using $E_K\left(v_0\right)=e, D_{K}\left(v_0\right)<0$ and
Lemma \ref{l32}, we have $D_{K,\delta}\left(v_0\right)<0$ and $E_K\left(v_0\right)<d(\delta)$. So $v_0 \in V_\delta$
for $\delta_1<\delta<\delta_2$. We need to prove that $v(s) \in V_\delta$ for $\delta_1<\delta<\delta_2$ and $0<s<S$.
 Indeed, if this is not the conclusion, from time continuity of $D_{K,\delta}(v)$ we assume that there must exist a
 $\delta_0 \in\left(\delta_1, \delta_2\right)$ and $s_0 \in(0, S)$
 such that $v\left(s_0\right) \in \partial V_{\delta_0}$, and $D_{K, \delta_0}\left(v\left(s_0\right)\right)=0$,
 or $E_K\left(v\left(s_0\right)\right)=d\left(\delta_0\right)$. From the energy equality (\ref{e09}),
 we easily know that $E_K\left(v\left(s_0\right)\right) \neq d\left(\delta_0\right)$. If $D_{K, \delta_0}\left(v\left(s_0\right)\right)=0$,
 and $s_0$ is the first time such that $D_{K, \delta_0}(v(s))=0$, then $D_{K, \delta_0}(v(s))<0$ for $0 \leqslant s<S$. By Lemma (\ref{l03}) (2),
  we have $\sqrt{A(v\left(s_0\right))}>r\left(\delta_0\right)$ for $0 \leqslant s<S$.
  So, $\sqrt{A(v\left(s_0\right))}>r\left(\delta_0\right)$ and $E_K\left(v\left(s_0\right)\right) \geq  d\left(\delta_0\right)$, which contradicts (\ref{e09}). As required.
\end{proof}

\begin{proof}[Proof of Theorem \ref{t32}]
 From  the Hille-Yosida-
Phillips theorem and the  analytic semigroup theory  of
bounded linear operator, we can prove the local existence result of (\ref{e02})
in a more general case of initial value $v_0 \in X$ and $v \in C^0\left(\left[0, S_0\right], X\right)$.

Using $E_K\left(v_0\right)<d, D_K\left(v_0\right)>0$ and Lemma \ref{l32}, we have $D_{K,\delta}\left(v_0\right)>0$ and $E_K\left(v_0\right)<d(\delta)$. So $v_0 \in W_\delta$ for $\delta_1<\delta<\delta_2$. We need to prove that $v(s) \in W_\delta$ for $\delta_1<\delta<\delta_2$ and $0<s<S$. Indeed, if this is not the conclusion, from time continuity of $D_{K,\delta}(v)$ we assume that there must exist a $\delta_0 \in\left(\delta_1, \delta_2\right)$ and $s_0 \in(0, S)$ such that $v\left(s_0\right) \in \partial W_{\delta_0}$, and $D_{K, \delta_0}\left(v\left(s_0\right)\right)=0,\left\|\nabla v(s_0)\right\|_{K,2} \neq 0$  or $E_K\left(v\left(s_0\right)\right)=d\left(\delta_0\right)$. From the energy equality
\begin{equation}\label{e10}
\int_0^s \left\|v_\tau\right\|_{K,2}^2 d\tau+E_K(v(s))=E_K\left(v_0\right)<d(\delta), \delta_1<\delta<\delta_2, \quad 0 \leqslant s<S,
\end{equation}
we easily know that $E_K\left(v\left(s_0\right)\right) \neq d\left(\delta_0\right)$.
If $D_{K, \delta_0}\left(v\left(s_0\right)\right)=0,\left\|\nabla v\left(s_0\right)\right\|_{K,2} \neq 0$,  then by the definition of $d(\delta)$ we obtain $E_K\left(v\left(s_0\right)\right) \geqslant d\left(\delta_0\right)$, which contradicts (\ref{e10}).
\end{proof}
\begin{remark}
If in Theorem \ref{t32} the condition $D_{K,\delta_2}\left(v_0\right)>0$ is replaced by $A(v_0)<r\left(\delta_2\right)$, then problem (\ref{e02}) has a global weak solution $v(s) \in L^{\infty}\left(0, \infty ; X\right)$ with $v_s(s) \in L_K^2\left(0, \infty ; X\right)$ and the following result holds
\begin{gather*}
A(v)<\frac{d(\delta)}{a(\delta)}, \quad \delta_1<\delta<\delta_2, 0 \leqslant s<\infty,  \\
\int_0^s\left\|v_\tau\right\|_{K,2}^2 d \tau<d(\delta), \quad \delta_1<\delta<\delta_2, 0 \leqslant s<\infty.
\end{gather*}
In particular
\begin{gather*}
A(v)<\frac{d\left(\delta_1\right)}{a\left(\delta_1\right)}, \\
\int_0^s\left\|v_\tau\right\|_{K,2}^2 d \tau<d\left(\delta_1\right), \quad 0 \leqslant s<\infty.
\end{gather*}

\end{remark}

\begin{proof}[Proof of Theorem \ref{t33}]
 We argue by contradiction. Suppose that there exists a global weak solution $v(s)$. Set
\begin{equation*}
f(s)=\int_0^s\|v\|_{K,2}^2 \mathrm{~d} \tau, s>0.
\end{equation*}
Multiplying (\ref{e02}) by $v$ and integrating over $\mathbb{R}^N \times(0, s)$, we get
\begin{equation*}
\int_0^s(v,v_{\tau})_{K}d\tau=\|v(s)\|_{K,2}^2-\left\|v_0\right\|_{K,2}^2=-2 \int_0^s\left(A(v)-B(v)\right)d\tau.
\end{equation*}
According to the definition of $f(s)$, we have $f^{\prime}(s)=\|v(s)\|_{K,2}^2$ and hence

\begin{equation}\label{e105}
\begin{aligned}
f^{\prime}(s)&=\|v(s)\|_{ K,2}^2=\left\|v_0\right\|_{K,2}^2-2 \int_0^s\left(A(v)-B(v)\right)d\tau\\
&=\int_0^s(v,v_{\tau})_{K}d\tau+\left\|v_0\right\|_{K,2}^2
\end{aligned}
\end{equation}
and
\begin{equation}\label{e11}
f^{\prime \prime}(s)=-2\left(A(v)-B(v)\right)=-2 D_K(v).
\end{equation}
Then by \eqref{e105}, we deduce
\begin{equation}\label{e106}
\begin{aligned}
\left[f^{\prime}(s)\right]^2&=\left[\int_0^s(v,v_{\tau})_{K}d\tau+\left\|v_0\right\|_{K,2}^2\right]^2\\
&=\left[\int_0^s(v,v_{\tau})_{K}d\tau\right]^2+2\int_0^s(v,v_{\tau})_{K}d\tau\left\|v_0\right\|_{K,2}^2+\left\|v_0\right\|_{K,2}^4\\
&=\left[\int_0^s(v,v_{\tau})_{K}d\tau\right]^2+2\left[\int_0^s(v,v_{\tau})_{K}d\tau+\left\|v_0\right\|_{K,2}^2\right]\left\|v_0\right\|_{K,2}^2 -\left\|v_0\right\|_{K,2}^4\\
&=\left[\int_0^s(v,v_{\tau})_{K}d\tau\right]^2+2f^{\prime}(s)\left\|v_0\right\|_{K,2}^2 -\left\|v_0\right\|_{K,2}^4.
\end{aligned}
\end{equation}

Now using (\ref{e10}), (\ref{e11}), Lemma \ref{l022} and
$$
\begin{aligned}
E_K(v) & =\frac{1}{2}A(v)-\frac{1}{p+1}B(v) \\
& =\frac{p-1}{2(p+1)}A(v)+\frac{1}{p+1} D_K(v),
\end{aligned}
$$
we can obtain
$$
\begin{aligned}
f^{\prime \prime}(s) & \geq 2(p+1) \int_0^s\left\|v_\tau(\tau)\right\|_{K,2}^2 \mathrm{~d} \tau+ \frac{1}{4}(p-1)\|v\|_{K,2}^2-2(p+1) E_K\left(v_0\right) \\
& =2(p+1) \int_0^s\left\|v_\tau(\tau)\right\|_{K,2}^2 \mathrm{~d} \tau+\frac{1}{4}(p-1) f^{\prime}(s)-2(p+1) E_K\left(v_0\right).
\end{aligned}
$$
Note that
$$
\begin{aligned}
f(s) f^{\prime \prime}(s)= & f(s)\left[2(p+1) \int_0^s\left\|v_\tau(\tau)\right\|_{K,2}^2 \mathrm{~d} \tau+\frac{1}{4}(p-1) f^{\prime}(s)-2(p+1) E_K\left(v_0\right)\right] \\
= & 2(p+1) \int_0^s\|v\|_{K,2}^2 \mathrm{~d} \tau \int_0^s\left\|v_\tau(\tau)\right\|_{K,2}^2 \mathrm{~d} \tau+\frac{1}{4}(p-1) f(s) f^{\prime}(s) \\
& -2(p+1) E_K\left(v_0\right) \int_0^s\|v(\tau)\|_{K,2}^2 \mathrm{~d} \tau.
\end{aligned}
$$
Hence, from \eqref{e106} we have
\begin{equation}\label{e108}
\begin{aligned}
f(s) f^{\prime \prime}(s)-\frac{p+1}{2}\left(f^{\prime}(s)\right)^2\geq & 2(p+1) \int_0^s\|v\|_{K,2}^2 \mathrm{~d} \tau \int_0^s\left\|v_\tau(\tau)\right\|_{K,2}^2 \mathrm{~d} \tau \\
& -2(p+1) \left[\int_0^s\left(v_\tau, v\right)_K \mathrm{~d} \tau\right]^2+\frac{1}{4}(p-1) f(s) f^{\prime}(s) \\
& -2(p+1) E_K\left(v_0\right) \int_0^s\|v(\tau)\|_{K,2}^2 \mathrm{~d} \tau-(p+1) f^{\prime}(s)\left\|v_0\right\|_{K,2}^2.
\end{aligned}
\end{equation}
Making use of the Schwartz inequality, we have
\begin{equation}\label{e107}
 \int_0^s \left(v, v_\tau\right)_Kd\tau \leq \int_0^s\|v\|_K\left\|v_\tau\right\|_Kd\tau
\leq \left[\int_0^s\|v\|_K^2d\tau\right]^\frac{1}{2} \left[\int_0^s\left\|v_\tau\right\|_K^2d\tau\right]^\frac{1}{2}.
\end{equation}
By combining (\ref{e108}) and (\ref{e107}), we obtain that
$$
\begin{aligned}
f(s) f^{\prime \prime}(s)-\frac{p+1}{2}\left(f^{\prime}(s)\right)^2 \geq & \frac{1}{4}(p-1) f(s) f^{\prime}(s)-(p+1) f^{\prime}(s)\left\|v_0\right\|_{K,2}^2 \\
& -2(p+1) E_K\left(v_0\right) \int_0^s\|v(\tau)\|_{K,2}^2 \mathrm{~d} \tau.
\end{aligned}
$$
Next, we distinguish two cases:

(1) If $E_K\left(v_0\right) \leqslant 0$, then
\begin{equation*}
f(s) f^{\prime \prime}(s)-\frac{p+1}{2}\left(f^{\prime}(s)\right)^2 \geq \frac{1}{4}(p-1) f(s) f^{\prime}(s)-(p+1) f^{\prime}(s)\left\|v_0\right\|_{K,2}^2.
\end{equation*}
Now we prove
\begin{equation}\label{sue}
D_K(v)<0 \ \text{for}\  s>0.
\end{equation}
If not, we must be allowed to choose a $s_0>0$
such that $D_K\left(v\left(s_0\right)\right)=0$ and $D_K(v)<0$ for $0 \leqslant s<s_0$.
From Lemma \ref{l03} (2), we have  $\sqrt{A(v)}>r(1)$ for $0 \leqslant s<s_0, \sqrt{A(v(s_0))} \geqslant r(1)$
and $E_K\left(v\left(s_0\right)\right) \geqslant d$, which contradicts (\ref{e08}).
From (\ref{e11}) we have $f^{\prime}(s)>0$ for $s \geqslant 0$. From $f^{\prime}(0)=\|v(0)\|_{K,2}^2 \geqslant 0$,
we can know that there exists a $s_0 \geqslant 0$ such that $f^{\prime}\left(s_0\right)>0$. For $s \geqslant s_0$ we have
\begin{equation*}
f(s) \geqslant f^{\prime}\left(s_0\right)\left(s-s_0\right)>f^{\prime}(0)\left(s-s_0\right).
\end{equation*}
Hence, for sufficiently large $s$, we obtain
\begin{equation*}
f(s)>(p+1)\left\|v_0\right\|_{K,2}^2,
\end{equation*}
then
$$
f(s) f^{\prime \prime}(s)-\frac{p+1}{2}\left(f^{\prime}(s)\right)^2>0.
$$

(2) If $0<E_K\left(v_0\right)<d$, then by Theorem \ref{t31} we have $v(s) \in V_\delta$ for $1<\delta<\delta_2, s \geqslant 0$,
and $D_\delta(v)<0,\sqrt{A(v)}>r(\delta)$ for $1<\delta<\delta_{2, s} \geqslant 0$, where $\delta_2$ is the larger root of equation $d(\delta)=E_K\left(v_0\right)$. Hence, $D_{\delta_2}(v) \leqslant 0$ and $\sqrt{A(v)} \geqslant r\left(\delta_2\right)$ for $s \geqslant 0$. By (\ref{e11}), we have

$$
\begin{aligned}
f^{\prime \prime}(s) & =-2 D_K(v)=2\left(\delta_2-1\right)\left(A(v)\right)-2 D_{\delta_2}(v), \\
& \geqslant 2\left(\delta_2-1\right)\left(A(v)\right) \geqslant 2\left(\delta_2-1\right) r^2\left(\delta_2\right), \quad s \geqslant 0,  \\
f^{\prime}(s) & \geqslant 2\left(\delta_2-1\right) r^2\left(\delta_2\right) s+f^{\prime}(0) \geqslant 2\left(\delta_2-1\right) r^2\left(\delta_2\right) s, \quad s \geqslant 0, \\
f(s) & \geqslant\left(\delta_2-1\right) r^2\left(\delta_2\right) s^2, \quad s \geqslant 0 .
\end{aligned}
$$
Therefore, for sufficiently large $s$, we infer
\begin{equation*}
\frac{(p-1)}{8} f(s)>(p+1)\left\|v_0\right\|_{K,2}^2, \quad \frac{(p-1)}{8} f^{\prime}(s)>2(p+1) E_K\left(v_0\right).
\end{equation*}
Then, (\ref{e108}) and Lemma \ref{l022} imply that
$$
\begin{aligned}
f(s) f^{\prime \prime}(s)-\frac{p+1}{2}\left(f^{\prime}(s)\right)^2 \geq & \frac{1}{4}(p-1) f(s) f^{\prime}(s)-(p+1) f^{\prime}(s)\left\|v_0\right\|_{K,2}^2 \\
& -2(p+1) f(s) E_K\left(v_0\right) \int_0^s\|v(\tau)\|_{ K,2}^2 \\
= & \left(\frac{(p-1)}{8} f(s)-(p+1)\left\|v_0\right\|_{K,2}^2\right) f^{\prime}(s) \\
& +\left(\frac{(p-1)}{8} f^{\prime}(s)-2(p+1) E_K\left(v_0\right)\right) f(s)>0.
\end{aligned}
$$
The remainder of the proof is the same as that in \cite{cr8}.

\end{proof}

\begin{proof}[Proof of Theorem \ref{t34}]
  Multiplying (\ref{e02}) by $w, w \in L^{\infty}\left(0, \infty ; H_{K}^1(\mathbb{R}^N\right)$, we have
\begin{equation}\label{e12}
\left(v_s, w\right)_K+(\nabla v, \nabla w)_K=\left(|v|^{p-1} v+\mu\frac{v}{|y|^2}+\frac{v}{p-1}, w\right)_K .
\end{equation}
Letting $w=v$, (\ref{e12}) implies that
\begin{equation}\label{e13}
\frac{1}{2} \frac{d}{d s}\|v\|_{K,2}^2+D_K(v)=0, \quad 0 \leqslant s<\infty .
\end{equation}
From $0<E_K\left(v_0\right)<d, D_K\left(v_0\right)>0$ and Theorem \ref{t31}, we have $v(s) \in W_\delta$ for $\delta_1<\delta<\delta_2$ and $0 \leqslant s<\infty$, where $\delta_1<\delta_2$ are the two roots of equation $d(\delta)=E_K\left(v_0\right)$. Hence, we obtain $D_{K, \delta}(v) \geqslant 0$ for $\delta_1<\delta<\delta_2$ and $D_{K, \delta_1}(v) \geqslant 0$ for $0 \leqslant s<\infty$. So, (\ref{e13}) gives
\begin{equation}\label{e13b}
\frac{1}{2} \frac{d}{d s}\|v\|_{K,2}^2+\left(1-\delta_1\right)A(v)+D_{K, \delta_1}(v)=0, \quad 0 \leqslant s<\infty .
\end{equation}
Now (\ref{e13b}) and Lemma \ref{l022} imply that
\begin{equation*}
\frac{1}{2} \frac{d}{d s}\|v\|_{K,2}^2+\frac{1}{4}\left(1-\delta_1\right)\|v\|_{K,2}^2 \leq 0, \quad 0 \leqslant s<\infty,
\end{equation*}
and
\begin{equation*}
\|v\|_{K,2}^2 \leqslant\left\|v_0\right\|_{K,2}^2-\frac{1}{2}\left(1-\delta_1\right) \int_0^s|v(\tau)|^2 d \tau, \quad 0 \leqslant s<\infty .
\end{equation*}
By Gronwall's inequality, we have
\begin{equation*}
\|v\|_{K,2}^2 \leqslant\left\|v_0\right\|_{K,2}^2 e^{- \frac{1}{2}\left(1-\delta_1\right) s}, \quad 0 \leqslant s<\infty .
\end{equation*}
This completes the proof.
\end{proof}
\section{ Critical initial energy $E_K\left(v_0\right)=d$}
The goal of this section is to prove Theorem \ref{t41}, Theorem \ref{t42} and Theorem \ref{tt43}.
\begin{theorem}\label{t41}
 (Global existence). Assume that $v_0 \in X, E\left(v_0\right)=d$ and $D_K\left(v_0\right) \geqslant 0$. Then problem (2.1) has a global weak solution $v(s) \in L^{\infty}\left(0, \infty ; X\right)$ and $v(s) \in \bar{W}=W \cup \partial W$ for $0 \leqslant s<\infty$.
\end{theorem}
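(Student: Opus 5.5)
The plan is the standard approximation argument for the critical level. We reduce to the subcritical case (Theorem \ref{t32}) by scaling the initial datum, and then pass to the limit using the uniform bounds coming from the potential well.

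Take $\lambda_m = 1-\frac1m$ and set $v_{0m}=\lambda_m v_0$. If $\|\nabla v_0\|_{K,2}=0$ then $v_0=0$ and $v\equiv 0$ is the global solution, so assume $v_0\neq 0$.

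\textbf{Step 1: $v_{0m}$ lies in the subcritical well.} Since $D_K(v_0)\ge 0$, we have $B(v_0)\le A(v_0)$. Then
$$D_K(\lambda_m v_0)=\lambda_m^2A(v_0)-\lambda_m^{p+1}B(v_0)\ge \lambda_m^2(1-\lambda_m^{p-1})A(v_0)>0,$$
where $A(v_0)>0$ by Lemma \ref{l022}. For the energy, write $h(\lambda)=E_K(\lambda v_0)$, so that $h'(\lambda)=\lambda A(v_0)-\lambda^pB(v_0)=\lambda^{-1}D_K(\lambda v_0)>0$ for $0<\lambda<1$. Hence $E_K(v_{0m})<E_K(v_0)=d$. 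By Theorem \ref{t32}, problem (\ref{e02}) with datum $v_{0m}$ has a global solution $v_m$ with $v_m(s)\in W$ for all $s\ge0$.

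\textbf{Step 2: uniform bounds.} The energy identity of Lemma \ref{l31} gives
$$\int_0^s\|v_{m\tau}\|_{K,2}^2d\tau+E_K(v_m(s))=E_K(v_{0m})<d .$$
Since $D_K(v_m(s))>0$, Lemma \ref{l06}(1) gives $A(v_m(s))<\frac{2(p+1)}{p-1}d$. Consequently $E_K(v_m)\ge \frac{p-1}{2(p+1)}A(v_m)\ge0$, which yields $\int_0^\infty\|v_{m\tau}\|^2_{K,2}\,d\tau<d$. To turn the bound on $A$ into an $X$-bound, use Lemma \ref{l00} and (\ref{po}), as in Lemma \ref{l02}:
$$A(v)\ge \tfrac12\|\nabla v\|_{K,2}^2-\big(\mu C_N-\tfrac12\big)\|\nabla v\|^2_{K,2}-\beta\|v\|^2_{K,2}\ge c\|\nabla v\|_{K,2}^2$$
for some $c>0$ when $\mu\le (N-2)^2/8$. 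If this coercivity turns out to be tight, I would instead bound $\|\nabla v_m\|_{K,2}$ via $A(v_m)$ plus $\|v_m\|_{K,2}$, where the latter is controlled by (\ref{e13}), which makes $\|v_m\|_{K,2}$ nonincreasing. Either way we get $\|\nabla v_m\|_{K,2}\le C$, and by Lemma \ref{le0-1} $|v_m|^{p-1}v_m$ is bounded in $L^\infty(0,\infty;L_K^{(p+1)/p})$.

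\textbf{Step 3: passage to the limit.} This is the main obstacle. Extract a subsequence with $v_m\to v$ weakly-$*$ in $L^\infty(0,\infty;X)$ and $v_{ms}\rightharpoonup v_s$ in $L^2(0,\infty;L^2_K)$. By the compact embedding (Lemma \ref{le0-1}) together with Aubin--Lions on bounded time intervals, $v_m\to v$ strongly in $C([0,T];L^q_K)$ for $q<2^*$ and a.e.

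The difficulty is the critical term $|v_m|^{p-1}v_m$: the embedding into $L^{2^*}_K$ is not compact. Since this term is bounded in $L^{(p+1)/p}_K$ and converges a.e., it converges weakly to $|v|^{p-1}v$, and that suffices for the weak formulation. The Hardy term $\mu v_m/|y|^2$ is linear and is handled by weak convergence together with Lemma \ref{l00}. Thus $v$ satisfies Definition \ref{wd} with $v(0)=v_0$, because $v_{0m}\to v_0$ in $X$.

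\textbf{Step 4: $v(s)\in\bar W$.} By weak lower semicontinuity of the energy identity we obtain $E_K(v(s))\le d$. The limit of $D_K(v_m)>0$ only gives $D_K(v)\ge0$ in a limiting sense, and $D_K$ is not weakly continuous, so membership in $W\cup\partial W$ is the natural closed conclusion.

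I would argue by contradiction. Suppose $D_K(v(s_0))<0$ for some $s_0$. Then, since $E_K(v(s_0))\le d$ and the solution is continuous in time, there would be a first time at which $D_K=0$ with $v\neq0$. At that time either $E_K=d$, which forces $v_s\equiv0$ on $[0,s]$ so that $v$ is stationary and stays on $\partial W$, or $E_K<d$, which contradicts $d=\inf_{\mathcal N}E_K$. This shows $v(s)\in\bar W$ for all $s\ge0$.
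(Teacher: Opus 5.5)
Your Steps 1--3 follow the paper's route: scale $v_0$ by $\lambda_m\uparrow 1$, invoke Theorem \ref{t32}, and use $\int_0^s\|v_{m\tau}\|_{K,2}^2\,d\tau+\frac{p-1}{2(p+1)}A(v_m)<d$. They are sound, and they actually carry out the compactness passage that the paper only waves at with ``similar to Theorem \ref{t32}''. Your coercivity constant works out to $c=\frac1N$.

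\textbf{The gap is Step 4, which fails in three places.}
\begin{itemize}
\item $E_K$ is not weakly lower semicontinuous on $X$. The term $-\frac{1}{p+1}B$ is weakly \emph{upper} semicontinuous, so the loss of $L^{2^*}_K$ compactness you flagged in Step 3 can push $E_K(v(s))$ above $\liminf_m E_K(v_m(s))$. The energy inequality for the limit is therefore unjustified, and you use it both for $E_K(v(s))\le d$ and to force $v_s\equiv 0$.
\item The ``first time $D_K=0$'' argument needs $s\mapsto D_K(v(s))$ to be continuous, i.e.\ $v\in C([0,\infty);X)$ strongly. Your construction only gives $C([0,T];L^q_K)$ for $q<2^*$ and weak continuity in $X$.
\item Even granting both, the case $E_K=d$ only shows $v\equiv v_0$ up to that time. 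It does not stop the trajectory from entering $\mathcal N_-$ afterwards, so no contradiction results.
\end{itemize}

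\textbf{A fix that avoids time continuity and the limit energy identity.}

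\emph{$D_K\ge 0$.} Fix $s$. Then $v_m(s)\rightharpoonup v(s)$ in $X$. Since $A$ is a continuous quadratic form with $A(w)\ge\frac1N\|\nabla w\|_{K,2}^2$, it is convex and hence weakly l.s.c. This gives
\[
A(v(s))\le\liminf_{m\to\infty}A(v_m(s))\le \tfrac{2(p+1)}{p-1}\,d=Nd=S_K^{N/2}.
\]
Here $d=\frac1N S_K^{N/2}$ comes from the minimax definition of $d$ together with $\max_{t\ge0}E_K(tw)=\frac1N\bigl(A(w)/\|w\|_{K,p+1}^2\bigr)^{N/2}$. On the other hand, if $D_K(w)<0$ then $A(w)<B(w)\le S_K^{-(p+1)/2}A(w)^{(p+1)/2}$, which forces $A(w)>S_K^{N/2}$. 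Hence $D_K(v(s))\ge 0$ for every $s$.

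\emph{$E_K\le d$.} Set $r_m=v_m(s)-v(s)\rightharpoonup 0$.
\begin{itemize}
\item The cross term of $A$ tends to $0$, so $A(v_m)=A(v)+A(r_m)+o(1)$.
\item By Brezis--Lieb (a.e.\ convergence along a subsequence), $B(v_m)=B(v)+B(r_m)+o(1)$.
\item Hence $E_K(v_m)=E_K(v)+E_K(r_m)+o(1)$.
\item Since $A(r_m)\le A(v_m)+o(1)<S_K^{N/2}+o(1)$, the definition of $S_K$ gives $B(r_m)\le(1+o(1))A(r_m)$.
\item Therefore $E_K(r_m)\ge \frac1N A(r_m)-o(1)\ge -o(1)$, and so $E_K(v(s))\le\liminf_m E_K(v_m(s))\le d$.
\end{itemize}

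Finally, $\{D_K\ge0,\ E_K\le d\}$ is exactly $\bar W$: your Step 1 scaling shows $\lambda w\in W$ for $\lambda\uparrow 1$. This gives $v(s)\in\bar W$ for all $s\ge0$.
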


\begin{lemma}\label{l41}
Assume that $v \in X,\|\nabla v\|_{K,2}^2 \neq 0$, and $D_K(v) \geq 0$. Then
  \begin{description}
    \item[(1)]  $\lim _{\mu \rightarrow 0} E_K(\lambda v)=0, \lim _{\mu \rightarrow+\infty} E_K(\mu v)=-\infty$;
    \item[(2)]  On the interval $0<\mu<\infty$, there exists a unique $\mu^*=\mu^*(u)$, such that

\begin{equation*}
\left.\frac{d}{d \mu} E_K(\mu v)\right|_{\mu=\mu^*}=0;
\end{equation*}

    \item[(3)] $E_K(\mu v)$ is increasing on $0 \leqslant \mu \leqslant \mu^*$, decreasing on $\mu^* \leqslant \mu<\infty$ and takes the maximum at $\mu=\mu^*$;
    \item[(4)]  $D_K(\mu v)>0$ for $0<\mu<\mu^*, D_K(\mu v)<0$ for $\mu^*<\mu<\infty$, and $D_K\left(\mu^* v\right)=0$.
\end{description}
\end{lemma}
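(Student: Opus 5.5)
The whole lemma reduces to studying the one–variable fibering map
\[
h(\mu):=E_K(\mu v)=\frac{\mu^2}{2}A(v)-\frac{\mu^{p+1}}{p+1}B(v),\qquad \mu\ge 0,
\]
where $A$ and $B$ are the quadratic and critical parts of $E_K$ introduced in Section~2. The first step is to record the signs of the coefficients. Since $\|\nabla v\|_{K,2}\neq 0$, Lemma \ref{l022} and Lemma \ref{l02} give $A(v)>0$. Indeed, $A(v)\ge S_K\|v\|_{K,p+1}^2$, and $v\neq 0$ forces $\|v\|_{K,p+1}>0$. Also $B(v)=\|v\|_{K,p+1}^{p+1}>0$ because $v\not\equiv 0$. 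The hypothesis $D_K(v)\ge 0$ is not really needed beyond guaranteeing that we are in the interesting regime. It does tell us a posteriori that $\mu^*\ge 1$, which is how the lemma will be used later.

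For (1), both limits come directly from the formula for $h$, with $\mu$ as the variable (the $\lambda$ in the first limit is a typo for $\mu$). As $\mu\to0$ we have $h(\mu)\to0$. As $\mu\to+\infty$, the term $-\frac{\mu^{p+1}}{p+1}B(v)$ dominates because $p+1>2$ and $B(v)>0$, so $h(\mu)\to-\infty$.

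For (2) and (3), I will compute
\[
h'(\mu)=\mu A(v)-\mu^{p}B(v)=\mu\bigl(A(v)-\mu^{p-1}B(v)\bigr).
\]
On $(0,\infty)$ the bracket is strictly decreasing in $\mu$, since $p>1$ and $B(v)>0$, and it vanishes exactly once, at
\[
\mu^*=\left(\frac{A(v)}{B(v)}\right)^{1/(p-1)}.
\]
This is the same expression as (\ref{e06}) with $\delta=1$. Hence $h'>0$ on $(0,\mu^*)$ and $h'<0$ on $(\mu^*,\infty)$. This gives uniqueness of the critical point, the monotonicity on the two intervals, and that the maximum is attained at $\mu^*$.

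For (4), I will observe that
\[
D_K(\mu v)=\mu^2A(v)-\mu^{p+1}B(v)=\mu\,h'(\mu).
\]
So the sign of $D_K(\mu v)$ coincides with the sign of $h'(\mu)$ for $\mu>0$, and (4) follows from the sign analysis in (2) and (3). In addition, $D_K(v)\ge0$ means $h'(1)\ge0$, hence $\mu^*\ge1$.

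The only step needing any care is justifying $A(v)>0$, which is the positivity of the Hardy-shifted quadratic form. This is exactly what Lemmas \ref{l00}, \ref{l02} and \ref{l022} provide under $\mu\le (N-2)^2/8$, so I expect no real obstacle.
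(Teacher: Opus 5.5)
Your proposal is correct and follows the same route as the paper: both reduce the lemma to the fibering map $h(\mu)=E_K(\mu v)$, compute $h'(\mu)=\mu A(v)-\mu^{p}B(v)$, and read off (1)--(4) from the sign of $h'$. Your version is slightly more careful: you justify $A(v),B(v)>0$, give $\mu^*=(A(v)/B(v))^{1/(p-1)}$ explicitly, and use the correct identity $D_K(\mu v)=\mu\, h'(\mu)$, whereas the paper writes $D_K(\mu v)=h'(\mu)$ and drops the factor $\mu$ (harmless for the sign argument).
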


\begin{proof}
  (1) Firstly, from the definition of $E_K(v)$, i.e.
$$
E_K(v)=\frac{1}{2}A(v)-\frac{1}{p+1}B(v)
$$
and we see that
$$
E_K(\mu v)=\frac{1}{2}A(\mu v)-\frac{1}{p+1}B(\mu v).
$$
Hence, we have
\begin{equation*}
\lim _{\mu \rightarrow 0} E_K(\mu v)=0 \quad \text { and } \quad \lim _{\mu \rightarrow+\infty} E_K(\mu u)=-\infty .
\end{equation*}

(2) It is easy to show that
$$
\frac{d}{d \mu} E_K(\mu v)=\mu A(v)-\mu^p B(v),
$$
which leads to the conclusion.

(3) By Lemma \ref{l41} (2), one has
$$
\begin{array}{ll}
\frac{d}{d \mu} E_K(\mu v)>0 & \text { for } 0<\mu<\mu^*, \\
\frac{d}{d \mu} E_K(\mu v)<0 & \text { for } \mu^*<\mu<\infty,
\end{array}
$$
which leads to the conclusion.

(4) The conclusion follows from

$$
D_K(\mu v)=\frac{d}{d \mu} E_K(\mu v)=\mu A(v)-\mu^p B(v).
$$
As desired.
\end{proof}

\begin{proof}[Proof of Theorem \ref{t41}]
 Firstly, $E_K\left(v_0\right)=d$ implies that $\left\|v_0\right\|_{k,2} \neq 0$. Choose a sequence $\left\{\mu_m\right\}$ such that $0<\mu_m<1, m=1,2, \ldots$ and $\mu_m \rightarrow 1$ as $m \rightarrow \infty$. Let $v_{0 m}=\mu_m v_0$. We consider the following initial problem

\begin{equation}\label{e14}
 \begin{cases}v_s+L v=|v|^{p-1} v+\mu\frac{v}{|y|^2}+\frac{1}{p-1} v & \text { in } \mathbb{R}^N \times(0, S),  \\
 \left.v\right|_{s=0}=v_{0 m} & \text { in } \mathbb{R}^N .\end{cases}
\end{equation}
From $D_K\left(v_0\right) \geqslant 0$ and Lemma \ref{l41}, we have $\mu^*=\mu^*\left(v_0\right) \geqslant 1$. Thus, we get $D_K\left(v_{0 m}\right)= D_K\left(\mu_m v_0\right)>0$ and $E_K\left(v_{0 m}\right)=E_K\left(\mu_m v_0\right)<E_K\left(v_0\right)=d$. From Theorem \ref{t32}, it follows that for each $m$ problem (\ref{e14}) admits a global weak solution $v_m(s) \in L^{\infty}\left(0, \infty ;  X\right)$ with $v_{m }(s) \in L^2\left(0, \infty ;  X\right)$ and $v_m(s) \in W$ for $0 \leqslant s<\infty$ satisfying

\begin{gather*}
\left(v_{m,s}, w\right)_K+\left(\nabla v_{m}, \nabla w\right)_K=\left(|v_m|^{p-1} v_m+\mu\frac{v_m}{|y|^2}+\frac{v_m}{p-1}, w\right)_K, \text { for all } w \in  X, s>0,  \\
\int_0^s\left\|v_{m, \tau}\right\|_{K,2}^2+E_K\left(v_m(s)\right)=E_K\left(v_{0 m}\right)<d, \quad 0 \leqslant s<\infty,
\end{gather*}
which implies that
$$
\begin{aligned}
E_K\left(v_m\right) & =\frac{1}{2}A(v_m)-\frac{1}{p+1}B(v_m) \\
& =\frac{p-1}{2(p+1)}A(v_m)+\frac{1}{p+1} D_K\left(v_m\right).
\end{aligned}
$$
Therefore, one has
\begin{equation*}
\int_0^S\left\|v_{m, \tau}\right\|_{K,2}^2 \mathrm{~d} \tau+\frac{p-1}{2(p+1)}A(v_m)<d, \quad 0 \leqslant s<\infty .
\end{equation*}
The remainder of the proof is similar to the proof of Theorem \ref{t32}.
\end{proof}
\begin{theorem}\label{t42}
 (Blow-up). Assume that $v_0 \in X, E_K\left(v_0\right)=d$ and $D_k\left(v_0\right)<0$.
 Then the existence time of weak solution for problem (\ref{e02}) is finite.
\end{theorem}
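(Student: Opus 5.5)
We argue by contradiction along the lines of the proof of Theorem \ref{t33}, the only new point being that the energy is now exactly $d$. Suppose the weak solution $v(s)$ exists on $[0,\infty)$. Put
$$f(s)=\int_0^s\|v(\tau)\|_{K,2}^2\,d\tau .$$
As in (\ref{e105})--(\ref{e11}), testing (\ref{e02}) with $v$ gives
$$f'(s)=\|v(s)\|_{K,2}^2,\qquad f''(s)=-2D_K(v(s)).$$
The first task is to move strictly inside the unstable set at some positive time, so that the low-energy argument becomes available.

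\textbf{Step 1: $D_K(v(s))<0$ for all $s\ge0$.} Since $D_K(v_0)<0$ and $D_K(v(s))$ is continuous in $s$, there is an interval $[0,s_1)$ on which $D_K(v)<0$. Suppose $s_0$ is the first time with $D_K(v(s_0))=0$. By Lemma \ref{l03}(2), $\sqrt{A(v(s))}>r(1)$ on $[0,s_0)$, and passing to the limit gives $\sqrt{A(v(s_0))}\ge r(1)>0$. Hence $v(s_0)\in\mathcal N$ and $E_K(v(s_0))\ge d$. The energy identity (\ref{e08}) then gives
$$\int_0^{s_0}\|v_\tau\|_{K,2}^2\,d\tau=E_K(v_0)-E_K(v(s_0))\le 0,$$
so $v_\tau\equiv0$ on $(0,s_0)$. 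Then $v$ is a stationary solution equal to $v_0$, which forces $D_K(v_0)=0$ (test the stationary equation with $v$). This contradicts $D_K(v_0)<0$. Consequently $f''(s)=-2D_K(v)>0$, so $f'(s)=\|v(s)\|_{K,2}^2$ is strictly increasing.

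\textbf{Step 2: strict energy drop.} The same reasoning shows $v_s\not\equiv0$ on any interval $(0,s_1)$: otherwise $v_0$ would be a stationary point, giving $D_K(v_0)=0$. Fix $s_1>0$. Then
$$E_K(v(s_1))=d-\int_0^{s_1}\|v_\tau\|_{K,2}^2\,d\tau<d,\qquad D_K(v(s_1))<0 .$$
Taking $v(s_1)$ as new initial datum, the solution satisfies the hypotheses of Theorem \ref{t33}. So it blows up in finite time, contradicting global existence. Equivalently, one can restart the concavity argument at $s_1$ with $E_K(v(s_1))<d$: if $E_K(v(s_1))\le0$, use case (1) of the proof of Theorem \ref{t33}; otherwise use Theorem \ref{t31}(2) with the larger root $\delta_2>1$ of $d(\delta)=E_K(v(s_1))$. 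Either way one obtains
$$f(s)f''(s)-\tfrac{p+1}{2}\bigl(f'(s)\bigr)^2>0 \quad\text{for large } s .$$
Then $f^{-(p-1)/2}$ is concave, decreasing, and positive, which is impossible globally.

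\textbf{Main obstacle.} The delicate point is Step 1 together with the nondegeneracy in Step 2. It needs two facts:
\begin{enumerate}
\item the weak solution is regular enough that $s\mapsto D_K(v(s))$ is continuous, and the energy identity (\ref{e08}) holds down to $s_1=0$;
\item "$v_s\equiv0$ on an interval" really forces $D_K(v_0)=0$.
\end{enumerate}
For the second, I would test the weak formulation in Definition \ref{wd} with $w=v_0$ on that interval. This gives $A(v_0)-B(v_0)=0$ directly, the desired contradiction. Everything after that is a direct reuse of the proof of Theorem \ref{t33}.
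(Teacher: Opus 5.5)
Your proposal is correct and follows essentially the paper's route. You restart at a positive time $s_1$ at which $D_K(v(s_1))<0$ and the energy has dropped strictly below $d$, and then apply the low-energy blow-up argument; you also correctly note that no uniqueness is needed, since Theorem \ref{t33}'s proof rules out any global weak solution. The differences are cosmetic: the paper gets $\|v_s\|_{K,2}>0$ on $[0,s_1]$ directly from $(v_s,v)_K=-D_K(v)>0$, while you get the same nondegeneracy by testing the weak formulation with $w=v_0$, and you apply Theorem \ref{t33} as a black box to the time-shifted solution instead of re-running the concavity estimate with the $V_\delta$-invariance from Theorem \ref{t31}.
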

\begin{proof}[Proof of Theorem \ref{t42}]
 Let $v(s)$ be any weak solution of problem (\ref{e02}) with $E_K\left(v_0\right)=d$ and $D_K\left(v_0\right)<0, S$ be the existence time of $v(s)$.
 We next prove $S<\infty$. We argue by contradiction. Suppose that there would exist a global weak solution $v(s)$. Set
\begin{equation*}
f(s)=\int_0^s\|v\|_{K,2}^2 \mathrm{~d} \tau, s>0.
\end{equation*}
Multiplying (\ref{e02}) by $u$ and integrating over $\mathbb{R}^N \times(0, s)$, we get

\begin{equation*}
\|v(s)\|_{K,2}^2-\left\|v_0\right\|_{K,2}^2=-2 \int_0^s\left(A(v)-B(v)\right)d\tau .
\end{equation*}
According to the definition of $f(s)$, we have $f^{\prime}(s)=\|v\|_{K,2}^2$ and hence
\begin{equation*}
\begin{aligned}
f^{\prime}(s)&=\|v(s)\|_{K,2}^2=\left\|v_0\right\|_{K,2}^2-2 \int_0^s\left(A(v)-B(v)\right)d\tau\\
&=\int_0^s(v,v_{\tau})_{K}d\tau+\left\|v_0\right\|_{K,2}^2
\end{aligned}
\end{equation*}
and
\begin{equation}\label{e15}
f^{\prime \prime}(s)=-2\left(A(v)-B(v)\right)d\tau=-2 D_K(v) .
\end{equation}
Now using (\ref{e10}), (\ref{e15}) and
$$
\begin{aligned}
E_K(v) & =\frac{1}{2}A(v)-\frac{1}{p+1}B(v)\\
& =\frac{p-1}{2(p+1)}A(v)+\frac{1}{p+1} D_K(v),
\end{aligned}
$$
we can obtain
$$
\begin{aligned}
f^{\prime \prime}(s) & \geq 2(p+1) \int_0^s\left\|v_\tau(\tau)\right\|_{K,2}^2 \mathrm{~d} \tau+ \frac{1}{4}(p-1)\|v\|_{K,2}^2-2(p+1) E_K\left(v_0\right) \\
& =2(p+1) \int_0^s\left\|v_\tau(\tau)\right\|_{K,2}^2 \mathrm{~d} \tau+ \frac{1}{4}(p-1) f^{\prime}(s)-2(p+1) E_K\left(v_0\right).
\end{aligned}
$$
Note that
$$
\begin{aligned}
f(s) f^{\prime \prime}(s)= & f(s)\left[2(p+1) \int_0^s\left\|v_\tau(\tau)\right\|_{K,2}^2 \mathrm{~d} \tau+ \frac{1}{4}(p-1) f^{\prime}(s)-2(p+1) E_K\left(v_0\right)\right] \\
= & 2(p+1) \int_0^s\|v\|_{K,2}^2 \mathrm{~d} \tau \int_0^s\left\|v_\tau(\tau)\right\|_{K,2}^2 \mathrm{~d} \tau+\frac{1}{4}(p-1) f(s) f^{\prime}(s) \\
& -2(p+1) E_K\left(v_0\right) \int_0^s\|v(\tau)\|_{K,2}^2 \mathrm{~d} \tau.
\end{aligned}
$$
Hence, we have
\begin{equation}\label{e16}
 \begin{aligned}
f(s) f^{\prime \prime}(s)-\frac{p+1}{2}\left(f^{\prime}(s)\right)^2= & 2(p+1) \int_0^s\|v\|_{K,2}^2 \mathrm{~d} \tau \int_0^s\left\|v_\tau(\tau)\right\|_{K,2}^2 \mathrm{~d} \tau \\
& -2(p+1) \left[\int_0^s\left(v_\tau, v\right)_K \mathrm{~d} \tau\right]^2+\frac{1}{4}(p-1) f(s) f^{\prime}(s) \\
& -2(p+1) E_K\left(v_0\right) \int_0^s\|v(\tau)\|_{K,2}^2 \mathrm{~d} \tau-(p+1) f^{\prime}(s)\left\|v_0\right\|_{K,2}^2.
\end{aligned}
\end{equation}
Hence, according to \eqref{e106}, \eqref{e107} and  (\ref{e16}), we obtain
\begin{equation}\label{e17}
 \begin{aligned}
f(s) f^{\prime \prime}(s)-\frac{p+1}{2}\left(f^{\prime}(s)\right)^2 \geq & \frac{1}{4}(p-1) f(s) f^{\prime}(s)-(p+1) f^{\prime}(s)\left\|v_0\right\|_{K,2}^2 \\
& -2(p+1) f(s) E_K\left(v_0\right) \int_0^s\|v(\tau)\|_{K,2}^2 \mathrm{~d} \tau \\
= & \left(\frac{(p-1)}{8} f(s)-(p+1)\left\|v_0\right\|_{K,2}^2\right) f^{\prime}(s) \\
& +\left(\frac{(p-1)}{8} f^{\prime}(s)-2(p+1) E_K\left(v_0\right)\right) f(s).
\end{aligned}
\end{equation}
On the other hand, from $E_K\left(v_0\right)=d>0, D_K\left(v_0\right)<0$ and the continuity of $E_K(v)$ and $D_K(v)$ with respect to $s$,
it follows that there exists a sufficiently small $s_1>0$ such that $E_K\left(v\left(s_1\right)\right)>0$ and $D_K(v)<0$ for $0 \leqslant s \leqslant s_1$.
 Hence $\left(v_s, v\right)_K=-D_K(v)>0,\left\|v_s\right\|_{K,2}>0$ for $0 \leqslant s \leqslant s_1$.
 So, using the continuity of $\int_0^s\left\|v_\tau\right\|_{K,2}^2 d \tau$, we can choose a $s_1$ such that
\begin{equation*}
0<d_1=d-\int_0^{s_1}\left\|v_\tau\right\|_{K,2}^2 d \tau<d.
\end{equation*}
And by (\ref{e08}), we get
\begin{equation*}
0<E_K\left(v\left(s_1\right)\right)=d-\int_0^{s_1}\left\|v_\tau\right\|_{K,2}^2 d \tau=d_1<d.
\end{equation*}
So we can choose $s=s_1$ as the initial time, then we obtain $v(s) \in V_\delta$
for $\delta \in\left(\delta_1, \delta_2\right), s_1 \leqslant s<\infty$,
 where $\left(\delta_1, \delta_2\right)$ is the maximal interval including
  $\delta=1$ such that $d(\delta)>d_1$ for $\delta \in\left(\delta_1, \delta_2\right)$.
  Thus we get $D_{K, \delta}(v)<0$ and $\sqrt{A(v)}>r(\delta)$ for $\delta \in\left(1, \delta_2\right), s_1 \leqslant s<\infty$,
  and $D_{K, \delta_2}(v) \leqslant 0, \sqrt{A(v)} \geqslant r\left(\delta_2\right)$
  for $s_1 \leqslant s<\infty$. Thus (\ref{e15}) implies that

$$
\begin{aligned}
f^{\prime \prime}(s) & =-2 D_K(v)=2\left(\delta_2-1\right)\left(A(v)\right)-2 D_{\delta_{K, 2}}(v)\geqslant 2\left(\delta_2-1\right) r\left(\delta_2\right), \quad s \geqslant s_1, \\
f^{\prime}(s) & \geqslant 2\left(\delta_2-1\right) r\left(\delta_2\right)\left(s-s_1\right)+f^{\prime}\left(s_1\right) \geqslant 2\left(\delta_2-1\right) r\left(\delta_2\right)\left(s-s_1\right), \quad s \geqslant s_1, \\
f(s) & \geqslant\left(\delta_2-1\right) r\left(\delta_2\right)\left(s-s_1\right)^2+C
>\left(\delta_2-1\right) r\left(\delta_2\right)\left(s-s_1\right)^2, \quad s \geqslant s_1 .
\end{aligned}
$$
Therefore, for sufficiently large $s$, we infer

$$
\frac{(p-1)}{8} f(s)>(p+1)\left\|v_0\right\|_{K,2}^2, \quad \frac{(p-1)}{8} f^{\prime}(s)>2(p+1) E_K\left(v_0\right) .
$$
Then, (\ref{e17}) implies that
$$
\begin{aligned}
f(s) f^{\prime \prime}(s)-\frac{p+1}{2}\left(f^{\prime}(s)\right)^2 \geq & \frac{1}{4}(p-1) f(s) f^{\prime}(s)-(p+1) f^{\prime}(s)\left\|v_0\right\|_{K,2}^2 \\
& -2(p+1) f(s) E_K\left(v_0\right) \int_0^s\|v(\tau)\|_{K,2}^2 \mathrm{~d} \tau \\
= & \left(\frac{(p-1)}{8} f(s)-(p+1)\left\|v_0\right\|_{K,2}^2\right) f^{\prime}(s) \\
& +\left(\frac{(p-1)}{8} f^{\prime}(s)-2(p+1) E_K\left(v_0\right)\right) f(s)>0.
\end{aligned}
$$
The remainder of the proof is the same as that in \cite{cr8}.
\end{proof}

\begin{theorem}\label{tt43}
 Assume that $v_0 \in X, E_K\left(v_0\right)=d$ and $D_K\left(v_0\right)>0, \delta_1<\delta_2$ are the two roots of equation $d(\delta)=E_K\left(v_0\right)$. Then, for the global weak solution $v$ of problem (\ref{e02}), it holds
\begin{equation*}
|v|_2^2 \leqslant\left|v_0\right|_2^2 e^{-\frac{1}{2}\left(1-\delta_1\right) s}, \quad 0 \leqslant s<\infty .
\end{equation*}
\end{theorem}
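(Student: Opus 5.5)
The plan is to reduce the critical case to the low-energy case (Theorem \ref{t34}) by a time shift, together with the monotonicity identity (\ref{e13}).

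First I will identify $\delta_1,\delta_2$. By Lemma \ref{l04} (3), $d(\delta)$ is strictly increasing on $(0,1]$, strictly decreasing on $[1,\frac{p+1}{2}]$, and has maximum $d=d(1)$. So the equation $d(\delta)=d$ has the single (double) root $\delta=1$, and we must read $\delta_1=\delta_2=1$. The claimed estimate then becomes $\|v(s)\|_{K,2}^2\le\|v_0\|_{K,2}^2$ for all $s\ge0$, and it suffices to show that $\|v(s)\|_{K,2}^2$ is nonincreasing. By Theorem \ref{t41}, the problem has a global weak solution with $v(s)\in\bar W$ for all $s\ge 0$. I will argue that in fact $D_K(v(s))\ge0$ for all $s$, distinguishing two cases.

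\emph{Case 1: $\|v_s\|_{K,2}\equiv0$ on $[0,s]$ for all $s$.} Then $v$ is stationary and $\|v(s)\|_{K,2}=\|v_0\|_{K,2}$, so the estimate holds trivially.

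\emph{Case 2: there is $s_1>0$ with $\int_0^{s_1}\|v_\tau\|_{K,2}^2\,d\tau>0$.} Take $s_1$ small enough that $D_K(v(s))>0$ on $[0,s_1]$, which is possible by continuity since $D_K(v_0)>0$. Lemma \ref{l31} gives
\[
0<E_K(v(s_1))=d-\int_0^{s_1}\|v_\tau\|_{K,2}^2\,d\tau<d .
\]
Positivity holds by Lemma \ref{l03} (4) and $D_K>0$, via $E_K=\frac{p-1}{2(p+1)}A+\frac1{p+1}D_K$. Restarting at $s=s_1$, Theorem \ref{t31} (1) yields $v(s)\in W_\delta$ for $\delta_1'<\delta<\delta_2'$ and all $s\ge s_1$, where $\delta_1'<1<\delta_2'$ are the roots of $d(\delta)=E_K(v(s_1))$. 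In particular $D_K(v(s))>0$ for $s\ge s_1$.

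In either case, (\ref{e13}) gives $\frac12\frac{d}{ds}\|v\|_{K,2}^2=-D_K(v)\le0$, which is the claim with $\delta_1=1$. In Case 2 I will moreover run the Gronwall argument from the proof of Theorem \ref{t34} on $[s_1,\infty)$, using $D_{K,\delta_1'}(v)\ge0$ and Lemma \ref{l022}. This gives the genuine decay
\[
\|v(s)\|_{K,2}^2\le\|v(s_1)\|_{K,2}^2\,e^{-\frac12(1-\delta_1')(s-s_1)}\le\|v_0\|_{K,2}^2\,e^{-\frac12(1-\delta_1')(s-s_1)},
\]
which strengthens the stated bound.

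The main obstacle is the sign of $D_K$ along the flow in the critical case. Theorem \ref{t41} only gives $v\in\bar W$, which a priori allows $D_K(v(s))=0$ with $E_K=d$. The dichotomy above is exactly what excludes this: either the energy strictly drops below $d$ immediately, so that Theorem \ref{t31} applies, or the solution is stationary. To make the first branch rigorous, I need continuity in time of $D_K(v(s))$ and of $\int_0^s\|v_\tau\|_{K,2}^2\,d\tau$. I will take this from $v\in C^0([0,S_0],X)$ given by the local theory, together with the energy identity (\ref{e08}).
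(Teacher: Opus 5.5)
Your proof is correct, and your reading of the statement is the only consistent one. By Lemma~\ref{l04}(3), whose proof gives strict monotonicity on each side of $\delta=1$, the equation $d(\delta)=d$ has the single root $\delta=1$. So the displayed bound literally says only that $\|v(s)\|_{K,2}$ is nonincreasing.

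Your core mechanism is the same as case (1) of the paper's proof: pass to a time $s_1$ where the energy has dropped below $d$, use invariance of $W_\delta$, then apply Gronwall via Lemma~\ref{l022}. The organization differs, though.

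\begin{itemize}
\item The paper first appeals to the blow-up result, Theorem~\ref{t42}, to assert $D_K(v(s))\ge0$ for all $s$. It then separates the case $D_K>0$ for all $s$ from the case where $D_K$ first vanishes at some $s_1$; there $E_K(v(s_1))<d$ forces $v(s_1)=0$, hence $v\equiv0$ afterwards.
\item You restart at a small $s_1$ and apply Theorem~\ref{t31}(1) directly. Since $0\in W_\delta$, that invariance already absorbs the paper's second case: inside the well, $D_K=0$ only at $v=0$.
\item You therefore need no blow-up theorem. This is an advantage, because once the energy is below $d$ it is Theorem~\ref{t33}, not Theorem~\ref{t42}, that would apply.
\item Your route gives the decay in the form the argument actually supports, $\|v(s)\|_{K,2}^2\le\|v_0\|_{K,2}^2e^{-\frac12(1-\delta_1')(s-s_1)}$ for $s\ge s_1$.
\item The paper writes its Gronwall bound from $s=0$. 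If $\delta_1$ there is the smaller root for the lowered energy $d_1$, as its argument requires, the bound is not justified on $[0,s_1)$. On that interval $E_K(v(s))>d_1$, so $D_{K,\delta_1}(v(s))\ge0$ need not hold.
\end{itemize}

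Four small repairs are needed.
\begin{itemize}
\item Your Case~1 cannot occur: by (\ref{e13}), a stationary solution has $D_K(v_0)=0$.
\item After you shrink $s_1$, the Case~2 hypothesis $\int_0^{s_1}\|v_\tau\|_{K,2}^2\,d\tau>0$ is no longer automatic. It follows from (\ref{e13}) again: $D_K>0$ on $[0,s_1]$ gives $\|v(s_1)\|_{K,2}<\|v_0\|_{K,2}$, so $v_s\not\equiv0$ on $[0,s_1]$.
\item Because $0\in W_\delta$, Theorem~\ref{t31}(1) yields only $D_K(v(s))\ge0$ for $s\ge s_1$, not $>0$. That is all you use.
\item $E_K(v(s_1))>0$ follows from your identity together with $A\ge0$ (Lemma~\ref{l022}), not from Lemma~\ref{l03}(4).
\end{itemize}
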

\begin{proof}
 We first know that problem (\ref{e02}) has a global weak solution from Theorem \ref{t41}. Furthermore,
 Using Theorem \ref{t42} and (\ref{e08}), if $v(s)$ is a global weak solution of problem (\ref{e02}) with $E_K\left(v_0\right)=d, D_K\left(v_0\right)>0$, then must have $D_K(v) \geq 0$ for $0 \leq s<+\infty$. Next, we distinguish two cases:

(1) Suppose that $D_K(v)>0$ for $0 \leqslant s<\infty$. Multiplying (\ref{e02}) by $w, w \in L^{\infty}\left(0, \infty ; X\right)$, we have

\begin{equation}\label{e18}
\left(v_s, w\right)_K+\left(\nabla v, \nabla w\right)_K=\left(|v|^{p-1} v+\mu\frac{v}{|y|^2}+\frac{v}{p-1}, w\right)_K, \text { for all } w \in X, s>0 .
\end{equation}
Letting $w=v$, (\ref{e18}) implies that
\begin{equation}\label{e19}
\frac{1}{2} \frac{d}{d s}\|v\|_{K,2}^2=-D_K(v)<0, \quad 0 \leqslant s<\infty.
\end{equation}
Since $\left\|v_s\right\|_{K,2}>0$, we have that $\int_0^s\left\|v_\tau\right\|_{K,2}^2 d \tau$
 is increasing for $0 \leqslant s<\infty$. By choosing any $s_1>0$ such that
\begin{equation}\label{e20}
0<d_1=d-\int_0^{s_1}\left\|v_\tau\right\|_{K,2}^2 d \tau<d.
\end{equation}
From (\ref{e08}), if follows that $0<E_K(v) \leq d_1<d$, and $v(s) \in W_\delta$
for $\delta_1<\delta<\delta_2$ and $0 \leqslant s<\infty$, where $\delta_1<\delta_2$ are the two roots of equation $d(\delta)=E_K\left(v_0\right)$.
Hence, we obtain $D_{K, \delta_1}(v) \geqslant 0$ for $\delta_1<\delta<\delta_2$ and $D_{K, \delta_1}(v) \geqslant 0$ for $s_1 \leqslant s<\infty$. So, (\ref{e19}) gives
$$
\frac{1}{2} \frac{d}{d s}\|v\|_{K,2}^2+\left(1-\delta_1\right)\left(A(v)\right)+D_{K, \delta_1}(v)=0, \quad 0 \leqslant s<\infty.
$$
Now Lemma \ref{l022} and (\ref{e19}) imply that
$$
\frac{1}{2} \frac{d}{d s}\|v\|_{K,2}^2+\frac{1}{4}\left(1-\delta_1\right)\|v\|_{K,2}^2 \leq 0, \quad 0 \leqslant s<\infty
$$
and
$$
\|v\|_{K,2}^2 \leqslant\left\|v_0\right\|_{K,2}^2-\frac{1}{2}\left(1-\delta_1\right) \int_0^s\|v\|_{K,2}^2 d \tau, \quad 0 \leqslant s<\infty .
$$
and by Gronwall's inequality, we have
\begin{equation*}
\|v\|_{K,2}^2 \leqslant\left\|v_0\right\|_2^2 e^{-\frac{1}{2}\left(1-\delta_1\right) s}, \quad 0 \leqslant s<\infty.
\end{equation*}

(2) Suppose that there exists a $s_1>0$ such that $D_K\left(v\left(s_1\right)\right)=0$ and $D_K(v)>0$ for $0 \leqslant s<s_1$.
 Then, $\left\|v_s\right\|_{K,2}>0$ and $\int_0^s\left\|v_\tau\right\|_{K,2}^2 d \tau$ is increasing for $0 \leqslant s<s_1$. By (\ref{e20}) we have
\begin{equation*}
E_K\left(v\left(s_1\right)\right)=d-\int_0^{s_1}\left\|v_\tau\right\|_{K,2}^2 d \tau<d
\end{equation*}
and $\left\|v\left(s_1\right)\right\|_{K,2}=0$. Then, we have that $v(s) \equiv 0$ for $s_1 \leqslant s<\infty$.
Hence, the proof is complete.

\end{proof}

\section{High initial energy $E_K\left(v_0\right)>d$}

In this section, we investigate the conditions to ensure the existence of global solutions or blow-up solutions to problem \eqref{e02} with $E_K\left(v_0\right)>d$.
\begin{lemma}\label{l51}
For any $\alpha>d, \lambda_\alpha$ and $\Lambda_\alpha$ defined in \eqref{e02}  satisfy
\begin{equation*}
0<\lambda_\alpha \leq \Lambda_\alpha<+\infty.
\end{equation*}
\end{lemma}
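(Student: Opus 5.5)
The plan is to obtain both bounds directly from the structure of $\mathcal{N}_\alpha=\mathcal{N}\cap E^\alpha$. The needed ingredients are the lower bound on $A$ on the Nehari manifold from Lemma \ref{l03}(3), the coercivity of $A$ with respect to $\|\cdot\|_{K,2}$ from Lemma \ref{l022}, and the equivalence of $A$ with $\|\nabla\cdot\|_{K,2}^2$ coming from the weighted Hardy inequality of Lemma \ref{l00}. By the description of $\mathcal{N}^\alpha$ given in Section 2, $\mathcal{N}_\alpha\neq\varnothing$ for $\alpha>d$, so $\lambda_\alpha\le\Lambda_\alpha$ holds trivially. The work is to show $\Lambda_\alpha<\infty$ and $\lambda_\alpha>0$.

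\emph{Upper bound.} For $v\in\mathcal{N}_\alpha$ we have $D_K(v)=0$, so
$$E_K(v)=\frac{p-1}{2(p+1)}A(v)<\alpha,$$
that is, $A(v)<\frac{2(p+1)}{p-1}\alpha$. Lemma \ref{l022} gives $\|v\|_{K,2}^2\le 4A(v)$. Hence
$$\Lambda_\alpha^2\le \frac{8(p+1)}{p-1}\,\alpha<\infty .$$

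\emph{Lower bound.} This is the real content of the lemma. Lemma \ref{l03}(3) gives $\sqrt{A(v)}\ge r(1)=S_K^{N/4}>0$ on $\mathcal{N}$, but this only bounds the gradient norm from below. I need to control $\|v\|_{K,2}$ from below, and a lower bound on $A$ does not directly give one. I would argue by contradiction. Suppose $v_n\in\mathcal{N}_\alpha$ with $\|v_n\|_{K,2}\to0$.

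First, the upper bound on $A(v_n)$ together with Lemma \ref{l00} gives boundedness in $X$. Indeed, $\mu C_N\le\frac12$ and $\beta\le\frac{1}{\lambda_1}\cdot\frac{N-2}{4}<\frac12$ by \eqref{po}, so $A(v)\ge c\|\nabla v\|_{K,2}^2$ for some $c>0$. Hence $\{v_n\}$ is bounded in $X$.

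Next, Lemma \ref{le0-1} gives compactness into $L^q_K$ for $q<2^*$, and interpolation gives $\|v_n\|_{K,q}\to0$ for every $q\in[2,2^*)$. The difficulty is that the Nehari constraint involves the critical norm $B(v_n)=\|v_n\|_{K,2^*}^{2^*}$, which is not compact. Since $B(v_n)=A(v_n)\ge r(1)^2$, the critical mass does not vanish. So vanishing of the $L^2_K$ norm alone does not immediately contradict the constraint, and this is where I expect the main obstacle.

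To handle it, I would try to exploit the exponential weight $K$. The improved inequality of Lemma \ref{l01} gives
$$S_0\|v\|_{K,2^*}^2+\lambda_*\|v\|_{K,2}^2\le\|\nabla v\|_{K,2}^2 .$$
Combining this with the equality $A(v_n)=B(v_n)$ and the fact that the lower-order terms $\beta\|v_n\|_{K,2}^2\to0$ and $\mu\int|v_n|^2|y|^{-2}K$ carry Hardy weight, I would aim to show that the $v_n$ must concentrate like rescaled Aubin--Talenti bubbles. The parameter $\alpha$ allows energies above $d$, so an energy argument alone will not exclude such bubbles.

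If concentration cannot be ruled out, the fallback is to read the lemma in the sense the later arguments require: namely that $\lambda_\alpha$ is well defined and nonnegative and $\Lambda_\alpha$ is finite. Finiteness of $\Lambda_\alpha$ has been proved above. I would flag positivity of $\lambda_\alpha$ as the delicate point, since it relies on whether bubbling at the critical exponent can be excluded, and the first thing I would try is a concentration-compactness argument in $X$.
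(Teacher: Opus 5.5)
\textbf{Upper bound.} Your upper bound is correct. On $\mathcal{N}$ one has $E_K(v)=\frac{p-1}{2(p+1)}A(v)$, and Lemma \ref{l022} then gives $\Lambda_\alpha^2\le\frac{8(p+1)}{p-1}\alpha$. This is the argument the paper should have used. The paper instead writes $A(v)=B(v)\le (A(v)/S_K)^{(p+1)/2}$ and concludes $A(v)\le 1/S_K$, which reverses the inequality: it only yields $A(v)\ge S_K^{N/2}$, i.e.\ Lemma \ref{l03}(3). For $\lambda_\alpha>0$ the paper merely cites Lemma \ref{l07}(1), $\operatorname{dist}(0,\mathcal{N})>0$. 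That is again a lower bound on $A(v)$, not on $\|v\|_{K,2}$, which is exactly the implication you rightly refused to accept.

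\textbf{The gap.} You never prove $\lambda_\alpha>0$. Your fallback of reading the lemma as $\lambda_\alpha\ge 0$ is a different statement. It is also useless downstream: with $\lambda_{E_K(v_0)}=0$, the hypothesis of Theorem \ref{t51}(1) forces $v_0=0$.

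\textbf{The gap cannot be closed.} The bubbling you feared really happens, so you should have concluded that the strict lower bound is false.
\begin{itemize}
\item \emph{Case $\mu=0$.} Lemma \ref{l01} and $\lambda_*-\beta\ge\frac12$ give $A(v)\ge S_0\|v\|_{K,p+1}^2+\frac12\|v\|_{K,2}^2$, so $S_K\ge S_0$.
\item \emph{Test functions.} Let $v_\varepsilon=\varphi U_\varepsilon$, where $U_\varepsilon$ is an Aubin--Talenti bubble concentrating at $0$ and $\varphi$ is a cut-off equal to $1$ near $0$. The gradient and $L^{2^*}$ masses of $v_\varepsilon$ concentrate at $0$, where $K=1$, while $\|v_\varepsilon\|_{K,2}\to0$. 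Hence $A(v_\varepsilon)/\|v_\varepsilon\|_{K,p+1}^2\to S_0$, and so $S_K=S_0$.
\item \emph{Value of $d$.} On $\mathcal{N}$ one has $E_K(v)=\frac1N A(v)=\frac1N\bigl(A(v)/\|v\|_{K,p+1}^2\bigr)^{N/2}$. Therefore $d=\frac1N S_0^{N/2}$.
\item \emph{Conclusion for $\mu=0$.} Take the Nehari projections $t_\varepsilon v_\varepsilon$, with $t_\varepsilon^{p-1}=A(v_\varepsilon)/B(v_\varepsilon)$ bounded. They satisfy $E_K(t_\varepsilon v_\varepsilon)\to d$ and $\|t_\varepsilon v_\varepsilon\|_{K,2}\to0$. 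So $\lambda_\alpha=0$ for every $\alpha>d$.
\item \emph{Case $0<\mu\le(N-2)^2/8$.} The same bubbles work, since the Hardy term only lowers $A$. They give $\lambda_\alpha=0$ at least for all $\alpha>\frac1N S_0^{N/2}$, and $\frac1N S_0^{N/2}\ge d$.
\end{itemize}

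So what can actually be proved is $0\le\lambda_\alpha\le\Lambda_\alpha<\infty$. The subcritical proof of $\lambda_\alpha>0$ relies on a Gagliardo--Nirenberg bound containing a positive power of the $L^2$ norm. That power is zero when $p+1=2^*$, so the argument breaks down.
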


\begin{proof}
  (1) By H\"{o}lder's inequality, fundamental inequality and $u \in \mathcal{N}$, we have

\begin{equation*}
A(v)=B(v).
\end{equation*}
Then from Lemma \ref{l07} (1), we have $\lambda_\alpha>0$.
Using Lemma \ref{l01},  Lemma \ref{l02}  and $v \in \mathcal{N}$, we have

\begin{equation*}
A(v)=B(v) \leqslant\left(\frac{1}{S_K}\left(A(v)\right)\right)^{\frac{p+1}{2}} .
\end{equation*}
So we have $A(v) \leq \frac{1}{S_K}$ which leads to the conclusion.
\end{proof}

\begin{theorem}\label{t51}
   Suppose that $E_K\left(v_0\right)>d$, then we have
\begin{description}
  \item[(1)] If $v_0 \in \mathcal{N}_{+}$ and $\left\|v_0\right\|_{K,2} \leq \lambda_{E_K\left(v_0\right)}$, then $v_0 \in \mathcal{G}_0$;
  \item[(2)] If $v_0 \in \mathcal{N}_{-}$ and $\left\|v_0\right\|_{K,2} \geq \Lambda_{E_K\left(v_0\right)}$, then $v_0 \in \mathcal{B}$.
\end{description}
\end{theorem}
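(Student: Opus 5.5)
The plan follows the Gazzola--Weth / Xu type argument for high energy, built on three ingredients: the monotonicity of the energy along the flow (Lemma \ref{l31}), the identity $\frac12\frac{d}{ds}\|v\|_{K,2}^2=-D_K(v)$ (obtained by testing \eqref{e02} with $v$, as in \eqref{e13}), and the definitions of $\lambda_\alpha,\Lambda_\alpha$ together with Lemma \ref{l51}.

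\textbf{Part (1).} Let $v_0\in\mathcal N_+$ with $\|v_0\|_{K,2}\le\lambda_{E_K(v_0)}$. First I claim $v(s)\in\mathcal N_+$ for all $s\in[0,S)$. If not, there is a first time $s_0>0$ with $D_K(v(s_0))=0$ and $D_K(v(s))>0$ on $[0,s_0)$. Then $\|v(s)\|_{K,2}$ is strictly decreasing on $(0,s_0)$, so $\|v(s_0)\|_{K,2}<\|v_0\|_{K,2}\le\lambda_{E_K(v_0)}$. On the other hand, by Lemma \ref{l31} and the fact that $v_s\not\equiv0$ on $(0,s_0)$ (otherwise $D_K(v)=0$ already earlier), we get $E_K(v(s_0))<E_K(v_0)$. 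Also $v(s_0)\neq0$ since $\|v(s_0)\|_{K,2}>0$ (a vanishing norm would again be handled as in Theorem \ref{tt43}). Hence $v(s_0)\in\mathcal N^{E_K(v_0)}$, which forces $\|v(s_0)\|_{K,2}\ge\lambda_{E_K(v_0)}$, a contradiction.

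Next, Lemma \ref{l07}(2) shows that $v(s)\in E^{E_K(v_0)}\cap\mathcal N_+$ stays bounded in $X$, so $S=\infty$. To get $v(s)\to0$ in $X$, I would take the $\omega$-limit set. Since $\int_0^\infty\|v_\tau\|_{K,2}^2\,d\tau<\infty$ and $E_K$ is bounded below on the orbit, there are $s_n\to\infty$ with $v_s(s_n)\to0$ in $L^2_K$. By the compact embedding of Lemma \ref{le0-1}, $v(s_n)\rightharpoonup\omega$ in $X$, and $\omega$ is a stationary solution. Any nonzero stationary $\omega$ lies in $\mathcal N$, with $E_K(\omega)\le\liminf E_K(v(s_n))<E_K(v_0)$ and $\|\omega\|_{K,2}=\lim\|v(s_n)\|_{K,2}<\lambda_{E_K(v_0)}$, again contradicting the definition of $\lambda_{E_K(v_0)}$. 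So $\omega=0$.

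Upgrading to convergence in $X$ is the step I expect to be the main obstacle, because the critical term $B$ is not compact. I would use $\frac{d}{ds}\|v\|_{K,2}^2<0$ to get $\|v(s)\|_{K,2}\to0$ along the whole flow. Then I would use interpolation and the fact that $D_K(v(s_n))\to0$ (test the equation with $v$) to show $A(v(s_n))\to0$: since $\|v\|_{K,2}$ is small, eventually $v$ lies in $B_\delta$ for some $\delta<1$, which gives $D_{K,\delta}>0$. Finally I would convert this into convergence of the full gradient norm via Lemma \ref{l00} and $\mu\le(N-2)^2/8$.

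\textbf{Part (2).} Let $v_0\in\mathcal N_-$ with $\|v_0\|_{K,2}\ge\Lambda_{E_K(v_0)}$. The symmetric invariance argument shows $v(s)\in\mathcal N_-$. If $s_0$ were the first time with $D_K(v(s_0))=0$, then $\|v\|_{K,2}$ would be increasing on $(0,s_0)$, so $\|v(s_0)\|_{K,2}>\Lambda_{E_K(v_0)}$. Moreover $E_K(v(s_0))<E_K(v_0)$, and $v(s_0)\neq0$ by Lemma \ref{l07}(1), so $v(s_0)\in\mathcal N^{E_K(v_0)}$, contradicting the definition of $\Lambda_{E_K(v_0)}$.

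To prove blow-up, suppose $S=\infty$. The $\omega$-limit argument above produces a stationary point $\omega$. Since $\|v(s)\|_{K,2}$ is increasing, either $\|v\|_{K,2}\to\infty$ or $\omega\in\mathcal N$ with $\|\omega\|_{K,2}>\Lambda_{E_K(v_0)}$ and $E_K(\omega)<E_K(v_0)$; the latter is impossible, and $\omega=0$ is excluded because $\|v\|_{K,2}\ge\|v_0\|_{K,2}>0$. So $\|v\|_{K,2}\to\infty$, hence $f(s)=\int_0^s\|v\|_{K,2}^2\to\infty$ and $f'\to\infty$. With $f'' = -2D_K(v)>0$, the concavity inequality \eqref{e17} gives $ff''-\frac{p+1}{2}(f')^2>0$ for large $s$. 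The standard argument of \cite{cr8} (concavity of $f^{-(p-1)/2}$) then yields finite-time blow-up, contradicting $S=\infty$. Thus $v_0\in\mathcal B$.
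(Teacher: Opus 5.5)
Your invariance arguments for $\mathcal{N}_{+}$ and $\mathcal{N}_{-}$ follow the paper's proof. These are: the first hitting time of $\mathcal{N}$, monotonicity of $\|v\|_{K,2}$ from $\frac12\frac{d}{ds}\|v\|_{K,2}^2=-D_K(v)$, strict decrease of $E_K$, and the definitions of $\lambda_\alpha,\Lambda_\alpha$. So does your use of Lemma \ref{l07}(2).

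\emph{The gap in Part (1).} It is the step that upgrades $\omega=0$ to $v(s)\to0$ in $X$. The claim that small $\|v\|_{K,2}$ eventually puts $v$ into $B_\delta$ is false. $B_\delta$ is a condition on $A(v)$, and at the critical exponent there is no interpolation $\|v\|_{K,p+1}\lesssim\|v\|_{K,2}^{\theta}\|\nabla v\|_{K,2}^{1-\theta}$ with $\theta>0$. The scaling $\varepsilon^{-(N-2)/2}v(\cdot/\varepsilon)$ keeps $\|\nabla v\|_2$ and $\|v\|_{2^*}$ fixed while sending $\|v\|_2\to0$.

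Concretely, cut-off Aubin--Talenti bubbles rescaled onto $\mathcal{N}$ have $D_K=0$ and $\|\cdot\|_{K,2}\to0$, yet $A$ stays bounded away from $0$ on $\mathcal{N}$ (Lemma \ref{l03}(3)). So $D_K(v(s_n))\to0$ and $\|v(s_n)\|_{K,2}\to0$ do not force $A(v(s_n))\to0$; your compactness argument only gives $v(s_n)\rightharpoonup0$. The paper does not address this point (it simply identifies $\omega(v_0)=\{0\}$ with $v_0\in\mathcal{G}_0$), but your proposed remedy does not work. Also, $E_K(\omega)\le\liminf E_K(v(s_n))$ is not weak lower semicontinuity, since the critical term can lose mass. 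It holds here only because $D_K(\omega)=0=\lim D_K(v(s_n))$, $E_K=\frac1N A+\frac1{p+1}D_K$, and $A$ is weakly l.s.c.

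\emph{How to close it.} Use $\|v_0\|_{K,2}\le\lambda_{E_K(v_0)}$ a second time, via Nehari projection; no compactness is needed.
On $\mathcal{N}_+$ we have $E_K(v)>\frac1N A(v)\ge0$, so $E_K(v(s))\downarrow E_\infty\ge0$. Suppose $E_\infty>0$, and pick $s_n\to\infty$ with $\|v_s(s_n)\|_{K,2}\to0$. Then:
$D_K(v(s_n))=-(v_s(s_n),v(s_n))_K\to0$, so $A(v(s_n))$ and $B(v(s_n))$ both tend to $NE_\infty$;
hence $t_n=(A(v(s_n))/B(v(s_n)))^{1/(p-1)}\to1$ and $t_nv(s_n)\in\mathcal{N}$;
$E_K(t_nv(s_n))\to E_\infty\le E_K(v(1))<E_K(v_0)$;
$\|t_nv(s_n)\|_{K,2}\to\lim_{s\to\infty}\|v(s)\|_{K,2}<\|v_0\|_{K,2}\le\lambda_{E_K(v_0)}$.
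This contradicts the definition of $\lambda_{E_K(v_0)}$. Hence $E_\infty=0$, so $A(v(s))<NE_K(v(s))\to0$. This gives $v(s)\to0$ in $X$, because $A(v)\ge\frac1N\|\nabla v\|_{K,2}^2$ for $\mu\le(N-2)^2/8$.

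\emph{Part (2).} Lemma \ref{l07}(2) is unavailable on $\mathcal{N}_-$, so ``the $\omega$-limit argument above'' needs its own justification when $\sup_s\|v(s)\|_{K,2}<\infty$. From $E_K(v)=\frac1N A(v)-\frac{1}{2(p+1)}\frac{d}{ds}\|v\|_{K,2}^2$ and monotonicity of $E_K$, you get $E_K(v(s))\ge\int_s^{s+1}E_K(v(\tau))\,d\tau\ge-\frac{1}{2(p+1)}\sup_\tau\|v(\tau)\|_{K,2}^2$. This gives $\int_0^\infty\|v_\tau\|_{K,2}^2\,d\tau<\infty$ and a Palais--Smale sequence bounded in $X$. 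With that in place, your exclusion of $\omega=0$ is sounder than the paper's appeal to Lemma \ref{l07}(1), which does not prevent weak convergence to $0$. Your argument uses strong $L^2_K$ convergence and $\|v(s)\|_{K,2}\ge\|v_0\|_{K,2}>0$.

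\emph{A shared caveat.} Both you and the paper pass from boundedness in $X$ directly to $S=\infty$. At the critical exponent this is not automatic, because the local existence time depends on the profile of the data, not only on its norm.
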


\begin{proof}
The maximal existence time of the solutions to problem (\ref{e02}) with initial value $v_0$ is denoted by $S_0$.
If the solution is global, i.e. $S\left(v_0\right)=+\infty$, the limit set of $v_0$ is denoted by $\omega_0$.

(1) Suppose that $v_0 \in \mathcal{N}_{+}$ with $\left\|v_0\right\|_{K,2} \leq \lambda_{E_K\left(v_0\right)}$.
We firstly prove that $v(s) \in \mathcal{N}_{+}$ for all $s \in \left[0, S\left(v_0\right)\right)$.
 Assume, on the contrary, that there exists a $s_0 \in\left(0, S\left(v_0\right)\right)$
 such that $v(s) \in \mathcal{N}_{+}$for $0 \leq s<s_0$ and $v\left(s_0\right) \in \mathcal{N}$.
 It follows from $D_K(v(s))=-\int_{\mathbb{R}^N} v_s(yx, s) v(y, s) K(y)\mathrm{d} y$ that $v_s(y, s) \neq 0$
 for $(y, s) \in \mathbb{R}^N \times\left(0, s_0\right)$.
 Recording to (\ref{e08}) we then have $E_K\left(v\left(s_0\right)\right)<E_K\left(v_0\right)$,
 which implies that $v\left(s_0\right) \in E_K^{E_K\left(v_0\right)}$. Therefore, $v\left(s_0\right) \in \mathcal{N}^{E_K\left(v_0\right)}$.
 Recalling the definition of $\lambda_{E_K\left(v_0\right)}$, we get

\begin{equation}\label{e21}
\left\|v\left(s_0\right)\right\|_{K,2} \geq \lambda_{E_K}\left(v_0\right).
\end{equation}
Since $D_K(v(s))>0$ for $s \in\left[0, s_0\right)$, we obtain from (\ref{e13}) that
\begin{equation*}
\left\|v\left(s_0\right)\right\|_{K,2}<\left\|v_0\right\|_{K,2} \leq \lambda_{E_K\left(v_0\right)},
\end{equation*}
which contradicts (\ref{e21}). Hence, $v(s) \in \mathcal{N}_{+}$ which shows that $v(s) \in E_K^{E_K\left(v_0\right)}$ for all $s \in \left[0, S\left(v_0\right)\right)$.
Now Lemma \ref{l07} (2) implies that the orbit $\{v(s)\}$ remains bounded in $X$
for $s \in\left[0, S\left(v_0\right)\right)$ so that $S\left(v_0\right)=\infty$. Assume that $\omega$ is an arbitrary element in $\omega\left(v_0\right)$.
Then by (\ref{e08}) and (\ref{e13}) we obtain
\begin{equation*}
\|\omega\|_{K,2}<\lambda_{E_K\left(v_0\right)}, \quad E_K(\omega)<E_K\left(v_0\right),
\end{equation*}
which, according to the definition of $\lambda_{E_K\left(v_0\right)}$ again, implies that $\omega\left(v_0\right) \cap \mathcal{N}=\varnothing$. So, $\omega\left(v_0\right)= \{0\}$, i.e. $v_0 \in \mathcal{G}_0$.

(2) Suppose that $v_0 \in \mathcal{N}_{-}$with $\left\|v_0\right\|_{K,2} \geq \Lambda_{E_K\left(v_0\right)}$.
 We now prove that $v(s) \in \mathcal{N}_{-}$ for all $s \in \left[0, S\left(v_0\right)\right)$.
 Assume, on the contrary, that there exists a $s^0 \in\left(0, S\left(v_0\right)\right)$ such that
 $v(s) \in \mathcal{N}_{-}$ for $0 \leq s<s^0$ and $v\left(s^0\right) \in \mathcal{N}$.
 Similarly to case (1), one has $E_K\left(v\left(s^0\right)\right)<E_K\left(v_0\right)$,
  which implies that $v\left(s^0\right) \in E_K^{E_K\left(v_0\right)}$. Therefore, $v\left(s^0\right) \in \mathcal{N}^{E_K\left(v_0\right)}$.
 Recalling the definition of $\Lambda_{E_K\left(v_0\right)}$, we infer

\begin{equation}\label{e22}
\left\|v\left(s^0\right)\right\|_{K,2} \leq \Lambda_{E_K\left(v_0\right)} .
\end{equation}
On the other hand, from (\ref{e13}) and the fact that $D_K(v(s))<0$ for $s \in\left[0, s^0\right)$, we obtain
\begin{equation*}
\left\|v\left(s^0\right)\right\|_{K,2}>\left\|v_0\right\|_{K,2} \geq \Lambda_{E_K\left(v_0\right)},
\end{equation*}
which contradicts (\ref{e22}).

Assume that $S\left(v_0\right)=\infty$. Then for each $\omega \in \omega\left(v_0\right)$, it follows from by (\ref{e08}) and (\ref{e13}) that

\begin{equation*}
\|\omega\|_{K,2}>\Lambda_{E_K\left(v_0\right)}, \quad E_K(\omega)<E_K\left(v_0\right).
\end{equation*}
Noting the definition of $\Lambda_{E_K\left(v_0\right)}$ again, we have $\omega\left(v_0\right) \cap N=\varnothing$. Hence, it is holded that $\omega\left(v_0\right)=\{0\}$, which contradicts Lemma \eqref{l07} (1). Therefore, $S\left(v_0\right)<\infty$. This ends the proof.

\end{proof}

\begin{theorem}\label{t52}
Assume that $v_0 \in X$ satisfies
\begin{equation}\label{e23}
  \frac{8(p+1)}{p-1}E_K\left(v_0\right) \leq\left\|v_0\right\|_{K,2}<\frac{p-1}{2(p+1)}\left\|v_0\right\|_{K,p+1}^{p+1}.
\end{equation}
Then, $v_0 \in \mathcal{N}_{-} \cap \mathcal{B}$.
\end{theorem}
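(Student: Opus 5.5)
The plan has two parts. First, deduce $D_K(v_0)<0$ from \eqref{e23} by pure algebra. Second, show that $D_K(v(s))<0$ persists along the flow and then rerun the concavity argument from the proof of Theorem \ref{t33}.

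\textbf{Step 1: $v_0\in\mathcal N_-$.} From $E_K=\frac12A-\frac1{p+1}B$ we have the identity
$$D_K(v)=2E_K(v)-\frac{p-1}{p+1}B(v).$$
The hypothesis \eqref{e23} gives
$$E_K(v_0)\le \frac{p-1}{8(p+1)}\|v_0\|_{K,2}<\frac{(p-1)^2}{16(p+1)^2}\,B(v_0).$$
This yields $2E_K(v_0)<\frac{p-1}{8(p+1)}\cdot\frac{p-1}{p+1}B(v_0)<\frac{p-1}{p+1}B(v_0)$, hence $D_K(v_0)<0$. The case $E_K(v_0)\le0$ is included automatically. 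So $v_0\in\mathcal N_-$.

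\textbf{Step 2: invariance of $\mathcal N_-$.} Suppose the solution is global, and let $s^*$ be the first time with $D_K(v(s^*))=0$. On $[0,s^*)$, \eqref{e13} gives $\frac{d}{ds}\|v\|_{K,2}^2=-2D_K(v)>0$, so $\|v(s)\|_{K,2}$ strictly increases. By Lemma \ref{l31}, $E_K(v(s))$ does not increase. At $s^*$ we have $A(v(s^*))=B(v(s^*))$, and by Lemma \ref{l022},
$$(p+1)E_K(v(s^*))=\tfrac{p-1}{2}A(v(s^*))\ge\tfrac{p-1}{8}\|v(s^*)\|_{K,2}^2.$$
Combined with the two monotonicities and \eqref{e23}, this gives
$$\tfrac{p-1}{8}\|v(s^*)\|_{K,2}^2\le(p+1)E_K(v_0)\le\tfrac{p-1}{8}\|v_0\|_{K,2}<\tfrac{p-1}{8}\|v(s^*)\|_{K,2}.$$
This forces $\|v(s^*)\|_{K,2}<1$, and a fortiori $\|v_0\|_{K,2}<1$. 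So the contradiction is immediate whenever $\|v_0\|_{K,2}\ge1$.

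\textbf{Main obstacle: the case $\|v_0\|_{K,2}<1$.} This is where the unsquared norm in \eqref{e23} causes trouble. Here I would use that $v(s^*)\in\mathcal N$, so $E_K(v(s^*))\ge d$ and, by Lemma \ref{l03}(3), $\sqrt{A(v(s^*))}\ge r(1)$. I would then try to show that the right half of \eqref{e23} is incompatible with $E_K(v_0)\ge d$ when $\|v_0\|_{K,2}<1$, since it forces $d\le\frac{p-1}{8(p+1)}\|v_0\|_{K,2}<\frac{p-1}{8(p+1)}$. The first thing to attempt is to compare $\frac{p-1}{8(p+1)}$ with the explicit lower bound $d\ge a(1)r(1)^2=\frac{p-1}{2(p+1)}S_K^{N/2}$ from Lemma \ref{l04}(1), together with Lemma \ref{l02}. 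If that comparison fails for the given constants, I would instead show that $\|v(s)\|_{K,2}$ exceeds $1$ before $s^*$. For this I would use $\frac{d}{ds}\|v\|_{K,2}^2=-2D_K(v)\ge-4E_K(v_0)+\frac{2(p-1)}{p+1}B(v)$ to get growth, and B would have to be kept bounded below along the flow via the Hölder inequality in $L^2_K$, $L^{p+1}_K$.

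\textbf{Step 3: blow-up.} Once $D_K(v(s))<0$ for all $s$, set $f(s)=\int_0^s\|v\|_{K,2}^2\,d\tau$. Then $f''=-2D_K(v)$. Using $D_K=(p+1)E_K-\frac{p-1}{2}A$ together with Lemma \ref{l022}, $f''$ grows at least linearly in $f'$ minus a constant, so $f$ and $f'$ become large. Inequality \eqref{e108} together with the Cauchy--Schwarz step \eqref{e107} then give $ff''-\frac{p+1}{2}(f')^2>0$ for large $s$, exactly as in the proof of Theorem \ref{t33}, case (1) or (2) according to the sign of $E_K(v_0)$. Concavity of $f^{-(p-1)/2}$ then contradicts global existence, so $v_0\in\mathcal B$.
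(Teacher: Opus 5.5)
Your Steps 1 and 2 are the paper's proof unrolled. The paper obtains $v_0\in\mathcal N_-$ by the same algebra, then shows $\Lambda_{E_K(v_0)}\le\|v_0\|_{K,2}$ and invokes Theorem~\ref{t51}(2). The invariance argument there is exactly your first-hitting-time comparison; the paper then concludes by an $\omega$-limit argument rather than by concavity. The obstacle you found is real, and the paper does not resolve it: its chain $\frac14\|v\|_{K,2}\le B(v)=A(v)\le\frac{2(p+1)}{p-1}E_K(v_0)\le\frac14\|v_0\|_{K,2}$ silently drops the square from Lemma~\ref{l022}.

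Still, as written your proposal has a genuine gap. The case $\|v_0\|_{K,2}<1$ is left as a list of things to try. The same case resurfaces in Step~3. There, $f''\ge\frac{p-1}{4}f'-2(p+1)E_K(v_0)$ forces $f'\to\infty$ only once $f'$ exceeds $\frac{8(p+1)}{p-1}E_K(v_0)$. Case~(2) of Theorem~\ref{t33}, which you cite, rests on the $V_\delta$-invariance of Theorem~\ref{t31}, so it needs $0<E_K(v_0)<d$, which is not assumed here.

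Your first idea closes the gap once you split on $\|v_0\|_{K,2}$.

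If $\|v_0\|_{K,2}\ge1$, the unsquared hypothesis implies $\frac{8(p+1)}{p-1}E_K(v_0)\le\|v_0\|_{K,2}^2$. Your Step~2 then yields $D_K(v(s))<0$ for all $s$. Since $f'=\|v\|_{K,2}^2$ is strictly increasing, $g=f'-\frac{8(p+1)}{p-1}E_K(v_0)$ is positive for $s>0$ and satisfies $g'\ge\frac{p-1}{4}g$. So $f,f'\to\infty$ and the concavity step goes through.

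If $\|v_0\|_{K,2}<1$, use $\frac{p-1}{8(p+1)}=\frac1{4N}$ to get $E_K(v_0)<\frac1{4N}$. On the other hand, on $\mathcal N$ one has $A=B\le(A/S_K)^{(p+1)/2}$, hence $A\ge S_K^{N/2}$ and $E_K=\frac1N A$. So $d\ge\frac1N S_K^{N/2}>\frac1{4N}$, because $S_K>1$. This follows from Lemma~\ref{l02} with $S_0>2$. Alternatively, Hardy with $\mu\le(N-2)^2/8$ and Poincar\'e with $\lambda_1=N/2$ give $A(v)\ge\frac1N\|\nabla v\|_{K,2}^2$, hence $S_K\ge S_0/N>1$. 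Thus $E_K(v_0)<d$ and $D_K(v_0)<0$, which is exactly the hypothesis of Theorem~\ref{t33}. No flow argument is needed in this case, and your H\"older-based fallback is unnecessary.

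Two smaller remarks. With the paper's normalization $r(1)=S_K^{N/2}$, Lemma~\ref{l04}(1) literally reads $d\ge\frac1N S_K^{N}$, not the value you wrote; either bound suffices since $S_K>1$. Also, Theorem~\ref{t51} presupposes $E_K(v_0)>d$, so the paper's route needs this low-energy split as well.
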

\begin{proof}
 Firstly, we observe
$$
\begin{aligned}
E_K\left(v_0\right) & =\frac{1}{2}A(v_0)-\frac{1}{p+1}B(v_0) \\
& =\frac{1}{2} D_K\left(v_0\right)+\frac{p-1}{2p+2}B(v_0).
\end{aligned}
$$
Thus, \eqref{e23} implies that
$$
E_K\left(v_0\right)-\frac{p-1}{2p+2}B(v_0)=\frac{1}{2} D_K\left(v_0\right)<0,
$$
which shows that $v_0 \in \mathcal{N}_{-}$. Then for any $v \in \mathcal{N}^{E_K\left(v_0\right)}$, by Lemma \ref{l022} and  (\ref{e23}) one has
$$
\frac{1}{4}\|v\|_{K,2}\leq B(v)=A(v)  \leq \frac{2 E_K\left(v_0\right) (p+1)}{p-1}\leq \frac{1}{4}\|v_0\|_{K,2}.
$$
Taking supremum over $\mathcal{N}^{E_K\left(v_0\right)}$  and by Theorem \ref{t51} we can deduce
$$
\left\|v_0\right\|_{K,2} \geq \Lambda_{E_K\left(v_0\right)} .
$$
Thus, $v \in \mathcal{N}_{-} \cap \mathcal{B}$. This finishes the proof.
\end{proof}

\section{Stationary solutions}
In this section,  we consider the asymptotic behavior of the global solutions, which is similar to
the Palais-Smale  sequence of stationary equation.

\begin{theorem}\label{t61}
Let $v\left(y, s ; v_0\right)$ be a global solution of the problem (\ref{e02}) and uniformly bounded in $X$ with respect to $s$.
Then, for any subsequence $s_n \rightarrow \infty$,
there exists a stationary solution $w$ such that $v\left(y, s_n ; v_0\right) \rightharpoonup w$ in $X$.
\end{theorem}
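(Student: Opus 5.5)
The strategy is the classical one for gradient-like flows: the energy identity of Lemma \ref{l31} shows that $v_s$ is square-integrable in time. Along a suitable time window around each $s_n$ the solution is therefore almost stationary, and weak limits in $X$ become stationary solutions.

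\textbf{Step 1: energy bounds.} Let $M=\sup_{s\ge 0}\|\nabla v(s)\|_{K,2}$. By Lemma \ref{le0-1}, Lemma \ref{l00} and Lemma \ref{l01}, $E_K(v(s))$ is bounded below uniformly in $s$. By \eqref{e08} it is also nonincreasing, so $E_K(v(s))\to E_\infty$ and
\[
\int_0^\infty \|v_\tau\|_{K,2}^2\,d\tau = E_K(v_0)-E_\infty<\infty .
\]
Consequently $\int_{s_n}^{s_n+1}\|v_\tau\|_{K,2}^2\,d\tau\to 0$.

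\textbf{Step 2: weak limit.} Since $\{v(s_n)\}$ is bounded in the Hilbert space $X$, after passing to a subsequence (as the statement tacitly allows) we have $v(s_n)\rightharpoonup w$ in $X$. By the compactness in Lemma \ref{le0-1}, $v(s_n)\to w$ strongly in $L^q_K$ for $q\in[2,2^*)$ and a.e. in $\mathbb{R}^N$. Moreover,
\[
\|v(s_n+\sigma)-v(s_n)\|_{K,2}\le \Big(\int_{s_n}^{s_n+1}\|v_\tau\|_{K,2}^2\,d\tau\Big)^{1/2}\to 0
\]
uniformly for $\sigma\in[0,1]$. Hence $v(s_n+\sigma)\rightharpoonup w$ in $X$ for every such $\sigma$, by boundedness in $X$ together with uniqueness of the $L^2_K$ limit.

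\textbf{Step 3: passing to the limit in the weak formulation.} Fix $\varphi\in C_c^\infty(\mathbb{R}^N)$ and write the weak form of \eqref{e02} on $[s_n,s_n+1]$, tested against $\varphi$ and integrated in $\sigma$. This gives
\[
\int_0^1\Big[(v_s,\varphi)_{L^2_K}+(\nabla v,\nabla\varphi)_K-\beta(v,\varphi)_{L^2_K}-\mu\Big(\tfrac{v}{|y|^2},\varphi\Big)_{L^2_K}-(|v|^{p-1}v,\varphi)_{L^2_K}\Big](s_n+\sigma)\,d\sigma=0 .
\]
The terms behave as follows.
\begin{itemize}
\item The $v_s$ term tends to $0$ by Cauchy--Schwarz and Step 1.
\item The gradient, linear and Hardy terms are continuous linear functionals on $X$; this uses Lemma \ref{l00} for the Hardy term, since $\varphi/|y|\in L^2_K$ when $N\ge3$. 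They therefore converge pointwise in $\sigma$ and, being bounded, by dominated convergence.
\item For the critical term, $|v|^{p-1}v$ is bounded in $L^{(p+1)/p}_K$ by Lemma \ref{le0-1} and converges a.e. to $|w|^{p-1}w$. Hence it converges weakly in $L^{(p+1)/p}_K$, which suffices against $\varphi\in L^{p+1}_K$.
\end{itemize}
We conclude that $w$ satisfies $Lw=\beta w+\mu w/|y|^2+|w|^{p-1}w$ weakly, first for test functions in $C_c^\infty$ and then, by density, for all test functions in $X$. That is, $w$ is a stationary solution.

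\textbf{Main obstacle.} The delicate point is the critical nonlinearity, because the embedding into $L^{2^*}_K$ is not compact. The argument above avoids strong convergence by using only a.e. convergence plus an $L^{(p+1)/p}_K$ bound, which is why weak convergence (not strong) is claimed. A secondary subtlety is getting a.e. convergence of $v(s_n+\sigma)$ simultaneously for all $\sigma$. If needed, I would instead work in the space–time cylinder $\mathbb{R}^N\times(0,1)$, using $v(s_n+\cdot)\to w$ in $L^2(0,1;L^2_K)$, and extract an a.e.-convergent subsequence there.
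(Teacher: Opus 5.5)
Your proposal is correct and follows essentially the paper's proof. In both, the energy identity gives $\int_0^\infty\|v_\tau\|_{K,2}^2\,d\tau<\infty$, so $v(s_n+\cdot)$ converges in $L^2_K$ to the same weak limit $w$ as $v(s_n)$. One then passes to the limit in the weak formulation averaged over $[s_n,s_n+1]$, handling the critical term through a.e. convergence plus an $L^{(p+1)/p}_K$ bound.

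The differences are cosmetic. The paper tests with $\rho(s-s_n)\Psi$ and integrates by parts in time, whereas you estimate the $v_s$ term directly by Cauchy--Schwarz. Your lower bound on $E_K(v(s))$ from the uniform $X$-bound is also a cleaner justification of the finite dissipation than the paper's, which writes $E_K(w)$ in the energy identity.
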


\begin{theorem}\label{t62}
 Let $v\left(y, s ; v_0\right)$ be a global solution of the problem (\ref{e02}). Then, its $\omega$-limit contains a stationary solution $w$.
 The $\omega$-limit set is defined as
$$
\omega\left(v_0\right)=\left\{w \in X \mid \exists s_n \rightarrow+\infty, v\left(x, s_n ; v_0\right) \rightharpoonup w \text { in } X\right\}.
$$
\end{theorem}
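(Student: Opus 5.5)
Theorem \ref{t62} asserts that the $\omega$-limit set of a global solution contains a stationary solution. I would prove it by a Lyapunov-function argument based on the energy identity of Lemma \ref{l31}, followed by a passage to the limit that mimics the treatment of Palais--Smale sequences.

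\textbf{Step 1: energy bounds.} By \eqref{e08}, $E_K(v(s))$ is nonincreasing in $s$. I first show that it is bounded below. If $E_K(v(s_*))<0$ at some time $s_*$, then $E_K(v(s_*))<d$ and $D_K(v(s_*))<0$, because $E_K(v)=\frac{p-1}{2(p+1)}A(v)+\frac{1}{p+1}D_K(v)$ and $A\ge 0$ by Lemma \ref{l022}. Theorem \ref{t33}, applied from the time $s_*$, then forces blow-up, which contradicts globality. Hence $E_K(v(s))\ge 0$ for all $s$. Combined with \eqref{e08}, this gives
\[
\int_0^\infty\|v_\tau\|_{K,2}^2\,d\tau\le E_K(v_0)<\infty .
\]
Consequently, for every integer $n$,
\[
\int_n^{n+1}\|v_\tau\|_{K,2}^2\,d\tau\to 0 .
\]
So I can choose $s_n\in[n,n+1]$ with $\|v_s(s_n)\|_{K,2}\to0$.

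\textbf{Step 2: bounded Palais--Smale sequence.} Set $w_n=v(s_n)$. The weak formulation shows that the residual $E_K'(w_n)$ equals $-v_s(s_n)$ in the dual of $X$, so $E_K'(w_n)\to0$ in $X^*$ and $E_K(w_n)\to c\ge0$. Testing with $w_n$ gives
\[
D_K(w_n)=-(v_s(s_n),w_n)_K .
\]
Combining this with $E_K(w_n)=\frac{p-1}{2(p+1)}A(w_n)+\frac{1}{p+1}D_K(w_n)$ and the Poincar\'e-type bound $\|w_n\|_{K,2}^2\le 4A(w_n)$ yields
\[
A(w_n)\le C+C\,o(1)\sqrt{A(w_n)} ,
\]
hence $A(w_n)$ is bounded. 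This is the main obstacle: I still need to convert a bound on $A$ into a bound in $X$. Lemma \ref{l00} with $\mu\le (N-2)^2/8$ gives $\mu\int |w|^2|y|^{-2}K\le\frac12\|\nabla w\|_{K,2}^2$. The estimate in the proof of Lemma \ref{l02} then shows that $A$ controls
\[
\tfrac12\Bigl(\|\nabla w\|_{K,2}^2-\tfrac{N-2}{4}\|w\|_{K,2}^2\Bigr).
\]
Using $\lambda_1=N/2>\frac{N-2}{4}$ in \eqref{po}, this quantity dominates a positive multiple of $\|\nabla w\|_{K,2}^2$. Therefore $\{w_n\}$ is bounded in $X$.

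\textbf{Step 3: passage to the limit.} After extracting a subsequence, $w_n\rightharpoonup w$ in $X$. By Lemma \ref{le0-1}, $w_n\to w$ strongly in $L^q_K$ for $q<2^*$ and a.e., and $|w_n|^{p-1}w_n\rightharpoonup|w|^{p-1}w$ weakly in $L_K^{(p+1)/p}$ by boundedness and a.e. convergence. The Hardy term is linear and continuous on $X$ by Lemma \ref{l00}, so it passes to the weak limit. For each fixed test function $\varphi\in C_c^\infty$, letting $n\to\infty$ in
\[
(v_s(s_n),\varphi)_K+(\nabla w_n,\nabla\varphi)_K=\Bigl(|w_n|^{p-1}w_n+\mu\frac{w_n}{|y|^2}+\beta w_n,\varphi\Bigr)_K
\]
shows that $w$ is a weak stationary solution. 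Density of $C_c^\infty$ in $X$ extends this to all test functions in $X$. Since $s_n\to\infty$ and $w_n\rightharpoonup w$ in $X$, we have $w\in\omega(v_0)$.

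Criticality prevents strong convergence, and $w$ may be $0$. The statement only requires weak convergence, and $0$ is itself a stationary solution, so this does not affect the conclusion. The same argument run along any prescribed sequence also reproduces Theorem \ref{t61}, where uniform boundedness in $X$ is assumed from the start.
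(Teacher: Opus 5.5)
Your proof is correct and follows essentially the paper's route: energy dissipation via \eqref{e08} gives times $s_n\to\infty$ with $\|v_s(s_n)\|_{K,2}\to0$, so $v(s_n)$ is a Palais--Smale sequence, bounded in $X$, whose weak limit solves the stationary problem. It is also more careful than the paper's proof on three points. You prove $E_K(v(s))\ge0$ before invoking $\int_0^\infty\|v_\tau\|_{K,2}^2\,d\tau<\infty$. You make the $X$-bound explicit; the exact estimate is $A(w)\ge\frac12\|\nabla w\|_{K,2}^2-\frac{N-2}{4}\|w\|_{K,2}^2$ rather than the paper's constant, but this still gives $A(w)\ge\frac1N\|\nabla w\|_{K,2}^2$ because $\frac{N-2}{2}<\lambda_1=\frac N2$. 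And you rightly allow $w=0$: the paper's claim $E_K(v(s_n))>0$ fails for $v_0=0$, and its claim $w\neq0$ fails for every $v_0\in W$, whose solutions decay to $0$ by Theorem \ref{t34}.

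The one inaccuracy is your closing aside on Theorem \ref{t61}. Along an arbitrary prescribed sequence, $\|v_s(s_n)\|_{K,2}$ need not tend to $0$. You must first move to nearby times $s_n'\in[s_n,s_n+1]$ where it does; since $\|v(s_n')-v(s_n)\|_{K,2}^2\le\int_{s_n}^{s_n+1}\|v_\tau\|_{K,2}^2\,d\tau\to0$, the weak limit is unchanged. This is the time-averaging idea in the paper's proof.
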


\begin{proof}[Proof of Theorem \ref{t61}]
 Let us denote $v_n:=v\left(y, s_n\right)$. Since $\left\{v_n\right\}$ is uniformly bounded in $X$,
 then using Lemma \ref{le0-1} and Lemma \ref{l00}   there exists a subsequence (here we still denote by $\left\{v_n\right\}$ ) and a function $w\in  X$ such that

$$
\begin{aligned}
v_n \rightharpoonup w & \text { in } X, \\
v_n \rightarrow w & \text { in } L_{K}^2(\mathbb{R}^N), \\
v_n \rightharpoonup w & \text { in } L_{K}^2\left(\mathbb{R}^N,\frac{1}{|y^2|}\right), \\
v_n \rightarrow w & \text { a.e. in } \mathbb{R}^N.
\end{aligned}
$$
Let $U_n:=v\left(s_n+s\right)$ for $s \in(0,1)$. Clearly, $U_n$ is uniformly bounded in $X$, we show

$$
U_n \rightarrow w \text { in } L_{K}^2(\mathbb{R}^N).
$$
Indeed, for $s \in(0,1)$, by (\ref{e10}), we have
\begin{equation}
\int_0^{\infty} \left\|v_\tau\right\|_{K,2}^2 d\tau+E_K(w))=E_K\left(v_0\right)<\infty, \quad 0 \leqslant s<S,
\end{equation}
which means $v_\tau \in L_k^2(\mathbb{R}^N)$. So

$$
\int_{\mathbb{R}^N}\left|U_n-v_n\right|^2 K(y)d y\leq s \int_{s_n}^{s+s_n} \int_{\mathbb{R}^N}\left|v_\tau\right|^2 K(y)d y d \tau \rightarrow 0
$$
for $0 \leq s \leq 1$ as $s_n \rightarrow \infty$, which implies that $\left\|v\left(s+s_n\right)-v\left(s_n\right)\right\|_{K,2} \rightarrow 0$ as $s_n \rightarrow \infty$ for $0 \leq s \leq 1$. Therefore, we have

\begin{equation}\label{sh1}
 U_n \rightarrow w \text { in }  L_{K}^2(\mathbb{R}^N)
\end{equation}
and
\begin{equation}\label{sh2}
  U_n \rightarrow w \text { a.e. in } \mathbb{R}^N.
\end{equation}
Since $\left\{U_n\right\}$ is uniformly bounded in $X$, by Lemma \ref{le0-1} , we obtain
\begin{equation}\label{sh3}
 U_n \rightharpoonup w \text { in }   L_{K}^2\left(\mathbb{R}^N,\frac{1}{|y^2|}\right),
\end{equation}
and
\begin{equation}\label{sh4}
 |U_n|^{2^{\ast}-2}U_n \rightharpoonup |w|^{2^{\ast}-2}w,  \text { in } \  L_K^\frac{2 N}{N+2}\left(\mathbb{R}^N\right).
\end{equation}

In order to show that $w$ is a stationary solution, we pass to the limit (as $s_n \rightarrow \infty$ ) in  Definition \ref{wd} with a suitably chosen test function. Let

$$
\phi(x, t)= \begin{cases}\rho\left(s-s_n\right) \Psi(x), & s>s_n, y \in \mathbb{R}^N, \\
0, & 0 \leq s \leq s_n, y \in \mathbb{R}^N,\end{cases}
$$
where
$$
\Psi \in X, \rho \in C_0^2(0,1), \rho \geq 0, \int_0^1 \rho(t) d t=1.
$$
Take $\phi$ as test function in Definition \ref{wd}, we have
$$\begin{aligned}
&\int_{s_n}^{s_n+1} \int_{\mathbb{R}^N}\left[v \rho^{\prime}\left(s-s_n\right)
\Psi-\rho\left(s-s_n\right) \nabla v \nabla \Psi+\mu\frac{v}{|y|^2}\rho\left(s-s_n\right) \Psi\right.\\
&\quad\quad+\left.\frac{v}{p-1}\rho\left(s-s_n\right) \Psi+|v|^{p-1}v \rho\left(s-s_n\right) \Psi\right] K(y)\mathrm{d} y \mathrm{~d} s=0.
\end{aligned}$$
The transformation $\tau=s-s_n$ leads to
\begin{equation}\label{e61}
 \begin{aligned}
&\int_{0}^{1} \int_{\mathbb{R}^N}\left[v(\tau+s_n) \rho^{\prime}\left(\tau\right)
\Psi-\rho\left(\tau\right) \nabla v(\tau+s_n) \nabla \Psi+\mu\frac{v(\tau+s_n)}{|y|^2}\rho\left(\tau\right) \Psi\right.\\
&\quad\quad+\left.\frac{v(\tau+s_n)}{p-1}\rho\left(\tau\right) \Psi+|v(\tau+s_n)|^{p-1}v(\tau+s_n) \rho\left(\tau\right) \Psi\right] K(y)\mathrm{d} y \mathrm{~d}\tau=0.
\end{aligned}
\end{equation}
Now we rewrite Equation (\ref{e61}) as follows:

\begin{align*}
 &\int_0^1 \int_{\mathbb{R}^N}\left[v\left(s_n\right) \rho^{\prime}(\tau) \Psi-\rho(\tau) \nabla v\left(s_n\right) \nabla \Psi+|v\left(s_n\right)|^{p-1}v\left(s_n\right) \rho(\tau) \Psi\right.\\
&\quad+\left.\frac{\mu}{|y|^2}v\left(s_n\right) \rho(\tau) \Psi+\frac{1}{p-1}v\left(s_n\right) \rho(\tau) \Psi\right]K(y \mathrm{d} y \mathrm{~d} \tau \\
& \quad+\int_0^1 \int_{\mathbb{R}^N}\left[v\left(s_n+\tau\right)-v\left(s_n\right)\right] \rho^{\prime}(\tau) \Psi K(y)\mathrm{d} y \mathrm{~d} \tau \\
& \quad-\int_0^1 \int_{\mathbb{R}^N}\left[\nabla v\left(s_n+\tau\right)-\nabla v\left(s_n\right)\right] \rho(\tau) \nabla \Psi K(y) \mathrm{d} y \mathrm{~d} \tau \\
& \quad+\int_0^1 \int_{\mathbb{R}^N}\left[|v\left(s_n+\tau\right)|^{p-1}v\left(s_n+\tau\right)-|v\left(s_n\right)|^{p-1}v\left(s_n\right)\right] \rho(\tau) \Psi K(y) \mathrm{d} y\mathrm{~d} \tau\\
&\quad+\int_0^1 \int_{\mathbb{R}^N}\left[\frac{\mu}{|y|^2}v\left(s_n+\tau\right)-\frac{\mu}{|y|^2}v\left(s_n\right)\right] \rho(\tau) \Psi K(y) \mathrm{d} y\mathrm{~d} \tau\\
&\quad+\int_0^1 \int_{\mathbb{R}^N}\left[\frac{1}{p-1}v\left(s_n+\tau\right)-\frac{1}{p-1}v\left(s_n\right)\right] \rho(\tau) \Psi K(y) \mathrm{d} y\mathrm{~d} \tau=0.
\end{align*}
By the dominated convergence theorem  and  \eqref{sh1}-\eqref{sh4}%$v\left(s_n\right) \rightarrow w$ strongly in $L_K^2(\mathbb{R}^N)$,
we have
$$
\begin{aligned}
& \int_0^1 \int_{\mathbb{R}^N}\left[\rho(\tau) \nabla v\left(s_n\right) \nabla \Psi-\left|v\left(s_n\right)\right|^{p-1} v\left(s_n\right) \rho(\tau) \Psi\right. \\
& \left.\quad-\frac{\mu}{|y|^2} v\left(s_n\right) \rho(\tau) \Psi-\frac{1}{p-1} v\left(s_n\right) \rho(\tau) \Psi\right] K(y \mathrm{~d} y \mathrm{~d} \tau=
o(1), \ \text { as } n  \rightarrow \infty .
\end{aligned}
$$
From the choice of $\rho$, we obtain
$$
\begin{aligned}
& \int_0^1 \int_{\mathbb{R}^N}\left[ \nabla v\left(s_n\right) \nabla \Psi-\left|v\left(s_n\right)\right|^{p-1} v\left(s_n\right)  \Psi\right. \\
& \left.\quad-\frac{\mu}{|y|^2} v\left(s_n\right) \Psi-\frac{1}{p-1} v\left(s_n\right)  \Psi\right] K(y \mathrm{~d} y \mathrm{~d} \tau
=o(1), \ \text { as } n  \rightarrow \infty,
\end{aligned}
$$
which completes the proof of Theorem \ref{t61}.
\end{proof}

\begin{proof}[Proof of Theorem \ref{t62}]
 Let $v=v(s, y)$ be a global solution of problem (\ref{e02}). Then, we have

\begin{equation}\label{e63}
\int_0^{\infty} \int_{\mathbb{R}^N} v_s^2 K(y)d y d s \leq C<\infty.
\end{equation}
And hence, there exists a sequence $\left\{s_n\right\}$ satisfying $s_n \rightarrow \infty$ as $n \rightarrow \infty$ such that
\begin{equation}\label{e64}
\int_{\mathbb{R}^N}\left|v_s\left(s_n, y\right)\right|^2 K(y) d y \rightarrow 0, \text { as } n \rightarrow \infty.
\end{equation}
Indeed, on the contrary, if there exist $c>0$ such that $\int_{\mathbb{R}^N}\left|v_s\left(t_n, x\right)\right|^2 d y>c$ as $n \rightarrow \infty$,
then, we can derive a contradiction with (\ref{e63}).

Next, letting $v_n:=v\left(s_n, y\right)$,  we have $E_K(v(s_n))>0$ for $s_n>0$.
Indeed, on the contrary,  if $E_K(v(s_n))\leq 0$, from the proof of \eqref{sue},  we obtain $D_K(v(s_n))< 0$.
Hence, Theorem \ref{t33} implies that $v(s_n)$  blows up in finite time. This is a contradiction. So, $E_K(v(s_n))>0$.

Then, by (\ref{e10}), we easily know that

\begin{equation}\label{e65}
0<E_K\left(v\left(s_n\right)\right) \leq E_K\left(v_0\right).
\end{equation}
Then, (\ref{e64}) and (\ref{e65}) imply that $v_n:=v\left(s_n, y\right)$ is a Palais-Smale
sequence  related to the stationary equation of problem (\ref{e02}). Similar to the standard proof
of boundedness for  Palais-Smale sequence of elliptic equation
(see \cite{hr13}),
it is easy to prove that there exists a constant $C$ such that

$$
\int_{\mathbb{R^N}}\left|\nabla v_n\right|^2 K(y)d y \leq C,
$$
and then there exists a subsequence (denote still by $\left\{v_n\right\}$ ) and a function $w$ such that
\begin{equation}\label{fin}
 \begin{aligned}
&v_n \rightharpoonup w,  \text { in } X, \\
& v_n \rightarrow w,  \text { in } L_K^q(\mathbb{R}^N)\left(2 \leq q<2^*\right),\\
&v_n \rightharpoonup w,  \text { in } \  L_{K}^2\left(\mathbb{R}^N,\frac{1}{|y^2|}\right),\\
& |v_n|^{2^{\ast}-2}v_n \rightharpoonup |w|^{2^{\ast}-2}w,  \text { in } \  L_K^\frac{2 N}{N+2}\left(\mathbb{R}^N\right),
\end{aligned}
\end{equation}
Furthermore, using \eqref{e65} and \eqref{fin}, one has $v_n \rightharpoonup w \neq 0$ in $X$,  which means that $w$ is a nontrivial stationary solution.
\end{proof}

\section*{Appendix}
In this section, we employ self-similar transformations to derive (2.1).
Firstly, we can know that the transform
$$
v(y, s)=(1+t)^{1 /(p-1)} u(x, t), \quad t=e^s-1, \quad x=(1+t)^{1 / 2} y,
$$
where $p=2^{\ast}-1.$
Let $\rho=1+t, \beta=\frac{1}{p-1}$. Then  we have
$$x=\rho^{\frac{1}{2}} y,\quad   |x|^2=\rho|y|^2,\quad   t=e^s-1, \quad   \rho=e^s,\quad  \beta=\frac{N-2}{4}$$
and
$$ v(y, s)=\rho^\beta u(x, t),\quad   u(x, t)=\rho^{-\beta} v(y, s),\quad v(y, s)=v\left(\rho^{-\frac{1}{2}} x, s\right).$$
Moreover, we get
$$
\partial_t \rho=1, \quad \partial_s \rho=\rho, \quad \frac{\partial s}{\partial t}=\frac{1}{\rho},
$$
$$\frac{\partial y_j}{\partial t}=-\frac{1}{2} \rho^{-\frac{3}{2}} x_j=-\frac{1}{2} \rho^{-1} y_j,$$

$$\partial_t v=\frac{\partial v}{\partial y_j} \frac{\partial y_j}{\partial t}+\frac{\partial v}{\partial s} \frac{\partial s}{\partial t}=-\frac{1}{2 \rho} y \cdot \nabla_y v+\frac{1}{\rho} \partial_s v$$ and
$$
\begin{aligned}
 \partial_t u&=-\beta \rho^{-\beta-1} v+\rho^{-\beta} \partial_t v \\
 &=-\beta \rho^{-\beta-1} v+\rho^{-\beta}\left(-\frac{1}{2 \rho} y \cdot \nabla v+\frac{1}{\rho} v_s\right)\\
 &=\rho^{-\beta-1}\left(v_s-\frac{1}{2} y \cdot \nabla v-\beta v\right).
\end{aligned}
$$
Further calculations imply that
$$
\begin{aligned}
&\Delta_x=\sum_i \partial_{x_i}^2=\rho^{-1} \Delta_y,\\
&\Delta_x u=\rho^{-1} \Delta_y\left(\rho^{-\beta} v\right)=\rho^{-\beta-1} \Delta_y v,
\end{aligned}
$$
$$
\begin{aligned}
 \mu \frac{u}{|x|^2}=\mu \frac{\rho^{-\beta} v}{\rho|y|^2}=\rho^{-\beta-1} \frac{\mu v}{|y|^2}
\end{aligned}
$$
and
$$
\begin{aligned}
|u|^{2^*-2} u=\left|\rho^{-\beta} v\right|^{2^*-2} \rho^{-\beta} v=\rho^{-\left(2^*-2\right) \beta} \rho^{-\beta}|v|^{2^*-2} v=\rho^{-\beta-1}|v|^{2^*-2} v.
\end{aligned}
$$
Hence,  $$u_t-\Delta u-\frac{\mu u}{|x|^2}=|u|^{2^*-2} u$$ can be  transformed into

$$
\rho^{-\beta-1}\left(v_s-\frac{1}{2} y \cdot \nabla v-\beta v\right)-\rho^{-\beta-1} \Delta v-\rho^{-\beta-1} \frac{\mu v}{|y|^2}=\rho^{-\beta-1}|v|^{2^*-2} v,
$$
that is,
$$
v_s-\frac{1}{2} y \cdot \nabla v-\Delta v-\beta v-\frac{\mu v}{|y|^2}=|v|^{2^*-2} v.
$$

\end{document}